\theoremstyle{plain}
\newtheorem{theorem}{Theorem}[section]
\newtheorem{proposition}{Proposition}[section]
\newtheorem{lemma}{Lemma}[section]
\theoremstyle{remark}
\newtheorem{assumption}{Assumption}[section]
\newtheorem{example}{Example}[section]
\newtheorem{remark}{Remark}[section]
\newcommand{\ud}{\mathrm{d}}
\newcommand{\Mz}{\mathcal{M}_0}
\newcommand{\norm}[1]{\lVert{#1}\rVert}
\newcommand{\inpr}[1]{\langle{#1}\rangle}
\newcommand{\borel}{\mathcal{B}}
\DeclareMathOperator{\argmin}{argmin}
\def\point{\,\cdot\,}
\newcommand{\RR}{\mathbb{R}}
\newcommand{\EE}{\operatorname{\mathbb{E}}} 
\newcommand{\Prob}[1]{\operatorname{\mathbb{P}}[{#1}]}
\def\argmin{\mathop{\rm arg\, min}}
\def\A{{\cal A}}
\def\F{{\cal F}}
\def\sphere{\mathbb{S}} 
\def\rset{\mathbb{R}}
\def\sphere{\mathbb{S}} 
\def\rset{\mathbb{R}}
\renewcommand{\ge}{\geqslant}
\renewcommand{\geq}{\ge}
\renewcommand{\le}{\leqslant}
\renewcommand{\leq}{\le}
\newcommand{\reals}{\rset}
\newcommand{\abs}[1]{\lvert{#1}\rvert}
\newcommand{\un}{\mathds{1}}
\newcommand{\1}{\un}
\newcommand{\Oh}{\operatorname{O}}
\newcommand{\eps}{\varepsilon}
\newcommand{\ball}{\mathbb{B}}
\newcommand{\diff}{\mathrm{d}}
\newcommand{\hPhi}{\widehat{\Phi}}
\newcommand{\hmu}{\widehat{\mu}}
\newcommand{\hF}{\widehat{F}}
\newcommand{\hV}{\widehat{V}}
\newcommand{\cA}{\mathcal{A}}
\newcommand{\cF}{\mathcal{F}}
\newcommand{\cG}{\mathcal{G}}
\newcommand{\cX}{\mathcal{X}}
\newcommand{\event}{\mathcal{E}}
\def\cone{\mathcal{C}}
\def\un{\mathbbm{1}}
\def\cone{\mathcal{C}}
\newcommand{\PP}{\Prob}
\def\loi{P}
\DeclareMathOperator{\leb}{leb}
\newcommand{\given}{\mid}
\def\Vf{v}
\def\hVf{\hat{{v}}}
\def\id{\mathrm{id}}
\newcommand{\cst}{60}
\begin{document}
\definecolor{gray}{rgb}{.5,.5,.5}
\definecolor{chgcol}{rgb}{0,0,0}
\newcommand{\chg}[1]{\textcolor{chgcol}{#1}}
\newcommand{\js}[1]{\textcolor{magenta}{\small\sffamily [#1]}}

\begin{frontmatter}
\title{Concentration bounds for the empirical angular measure
	with statistical learning applications}
\runtitle{Concentration bounds for the empirical angular measure}

\begin{aug}
\author[A]{\fnms{St\'ephan}~\snm{Cl\'emen\c{c}on}\ead[label=e1]{stephan.clemencon@telecom-paris.fr}}
\author[A]{\fnms{Hamid}~\snm{Jalalzai}\ead[label=e2]{hamid.jalalzai@telecom-paris.fr}}
\author[B]{\fnms{St\'ephane}~\snm{Lhaut}\ead[label=e3]{stephane.lhaut@uclouvain.be}}
\author[C]{\fnms{Anne}~\snm{Sabourin}\ead[label=e4]{anne.sabourin@u-paris.fr}}
\author[B]{\fnms{Johan}~\snm{Segers}\ead[label=e5]{johan.segers@uclouvain.be}}

\address[A]{LTCI, T\'el\'ecom Paris, Palaiseau , France. \\ \printead{e1,e2}}
\address[B]{LIDAM/ISBA, UCLouvain, Louvain-la-Neuve, Belgium. \\		\printead{e3,e5}}
\address[C]{ Université Paris Cité, CNRS, MAP5, F-75006 Paris, France.\\		\printead{e4}}

\end{aug}

\begin{abstract}
	The angular measure on the unit sphere characterizes the first-order dependence structure of the components of a random vector in extreme regions and is defined in terms of standardized margins. Its statistical recovery is an important step in learning problems involving observations far away from the center. In the common situation that the components of the vector have different distributions, the rank transformation offers a convenient and robust way of standardizing data in order to build an empirical version of the angular measure based on the most extreme observations. However, the study of the sampling distribution of the resulting empirical angular measure is challenging. It is the purpose of the paper to establish finite-sample bounds for the maximal deviations between the empirical and true angular measures, uniformly over classes of Borel sets of controlled combinatorial complexity. The bounds are valid with high probability and, up to logarithmic factors, scale as the square root of the effective sample size. The bounds are applied to provide performance guarantees for two statistical learning procedures tailored to extreme regions of the input space and built upon the empirical angular measure: binary classification in extreme regions through empirical risk minimization and unsupervised anomaly detection through minimum-volume sets of the sphere.
\end{abstract}

\begin{keyword}[class=MSC]
	\kwd[Primary ]{62G05}
	\kwd{62G30}
	\kwd{62G32}
	\kwd[; secondary ]{62H30}
\end{keyword}

\begin{keyword}
	\kwd{angular measure}
	\kwd{classification}
	\kwd{concentration inequality}
	\kwd{extreme value analysis}
	\kwd{minimum-volume sets}
	\kwd{ranks}
\end{keyword}

\end{frontmatter}


\section{Learning from multivariate extremes}
\label{sec:intro}

Estimation and prediction problems regarding the extremal behaviour of a \chg{$d$-dimensional} random vector $X$ are of key importance for risk assessment in finance, insurance, engineering and environmental sciences and, recently, for the analysis of weak signals in machine learning. A standard assumption to model the joint upper tail of $X$ is that its distribution lies in the maximal domain of attraction of a multivariate extreme value distribution. This assumption comprises two parts: 
\begin{enumerate}[label=(\roman*)]
	\item
	the marginal distributions of $X$ belong to the maximal domains of attraction of some univariate extreme value distributions; 
	\item
	after marginal transformation of $X$ through the probability integral transform or a variation thereof, the joint distribution of the transformed vector belongs to the maximal domain of attraction of a multivariate extreme value distribution with pre-specified margins. 
\end{enumerate}

Under the side assumption that the marginal distributions of $X$ are continuous, point~(ii) above only involves the copula of $X$. Point~(ii) can be imposed on its own, that is, independently of the assumptions on the margins in point~(i), and this is what we will do in this paper. 
\smallskip

\noindent
\textbf{The angular measure for multivariate extremes.} \quad
The multivariate extreme value distribution in point~(ii) is determined by a finite Borel measure, $\Phi$, with support contained in the intersection of $[0, \infty)^d$ with the unit sphere on $\reals^d$ with respect to some norm. This so-called angular measure, originally called spectral measure in \cite{de1977limit}, describes the first-order dependence structure of joint extremes of $X$ and is rooted in the theory of multivariate regular variation \citep{resnick:1987}. Since then, it has been recognized that the modelling of extremal dependence may require finer assumptions than the traditional maximal domain of attraction condition, leading for instance to conditional extreme value models and the theory of hidden regular variation; see for instance \cite{das+m+r:2013, wadsworth+t+d+e:2014} and the references therein. Here, we focus on inference on the angular measure $\Phi_p$ with respect to some $L_p$-norm, for $p \in [1,\infty]$. 

Inference on the angular measure is an important problem in extreme value analysis. It plays a part in the construction of confidence intervals for rare event probabilities \citep{dehaan+r:1998, dehaan+s:1999}. It lies at the basis for a test of the hypothesis that a bivariate distribution is in the maximum domain of attraction of an extreme value distribution \citep{einmahl+dh+l:2006}. It serves to model the action of a covariate on the extremal dependence of a baseline distribution through a density ratio model \citep{deCarvalho+d:2014, castrocamilo+dC:2017}. The angular density is also at the basis of an estimator of bivariate tail quantile regions \citep{einmahl+dh+k:2013}. Bounds for probabilities of joint excesses over high thresholds that are robust against misspecification of the angular measure are derived in \cite{engelke+i:2017}. The angular measure underlies techniques to find groups of variables exhibiting extremal dependence, with dimension reduction and sparse representations as objectives \citep{chautru2015,meyer+w:2019,lehtomaa+r:2020}; see \cite{engelke+i:2020} for a review. In \cite{janssen+w:2020}, the spherical $k$-means algorithm applied to a sample from the angular measure yields prototypes of extremal dependence. 

Furthermore, the angular measure is helpful for solving supervised and unsupervised learning tasks for sample points far away from the center of the distribution. In the spirit of principal component analysis, the eigendecomposition of the Gram matrix of the angular measure yields low-dimensional summaries of extremal dependence \citep{cooley+t:2018, drees+s:2019}. Anomalous data can be detected from unusual combinations of variables being large simultaneously \citep{goixJMVA} or from their lack of membership of minimum-volume sets of the unit sphere containing a large fraction of the total mass of the angular measure \citep{thomas17a}. Binary classification in extreme regions can be performed on the basis of the differences between the intra-class angular measures of the explanatory variables \citep{jalalzai2018binary}.
\smallskip

\noindent
\textbf{The empirical angular measure.} \quad
For a given dimension $d$, the collection of all angular measures is subject only to some moment constraints stemming from the marginal standardization of $X$ but does not form a parametric family. The usual considerations in favor of and against the use of parametric versus non-parametric methods therefore apply. Many parametric models have been proposed \citep{coles+t:1991, cooley+d+n:2010} and new ones continue to be invented, with a growing emphasis on the use of flexible models almost of a semi-parametric nature \citep{boldi+d:2007, sabourin+n:2014, sabourin:2015}. 

Our focus is on non-parametric estimation and inference via the empirical angular measure, $\hPhi_p$, introduced in~\cite{einmahl2001nonparametric} and generalized to every $L_p$-norms in~\cite{einmahl2009maximum}, although already alluded to some years earlier \cite{dehaan+r:1998,dehaan+s:1999}. Given a random sample from an unknown distribution, the marginal standardization mentioned in point~(ii) above is done by means of the empirical cumulative distribution functions. As a result, the estimator depends on the data only through the ranks.
On the one hand, the use of ranks makes the method invariant with respect to marginal scales and reduces sensitivity to outliers. On the other hand, the additional dependence stemming from the ranks greatly complicates the study of the sampling distribution of the estimator.

In fact, the asymptotic distribution of the empirical angular measure is known only in the bivariate case \citep{einmahl2001nonparametric, einmahl2009maximum}. In higher dimensions, only its consistency has been established \citep[Proof of Proposition~3.3]{janssen+w:2020}. This situation stands in contrast with the rank-based estimator of the stable tail dependence function, the asymptotic normality of which is known in any dimension \citep{einmahl2012mestim, bucher2014when}. The reason why the treatment of the empirical angular measure is so much more difficult is that it involves the empirical copula evaluated at sets of which the boundaries are not parallel to the coordinate axes. As a consequence, the usual argument to deal with the marginal empirical distribution functions via the functional delta method breaks down. Even outside the extreme value context, the asymptotic normality of the empirical copula in even a single non-rectangular set is to this day an open problem. 

Variations on the empirical angular measure enforce the aforementioned moment constraints through empirical or Euclidean likelihoods \citep{einmahl2009maximum, deCarvalho+o+s+w:2013} as well as a folding technique \citep{guillou+n+y:2015}. Other variations exploit more specific assumptions on the marginal distributions to estimate them differently than by the empirical distribution functions, for instance by fitting generalized Pareto distributions to the univariate tails \citep{einmahl+dh+s:1997}. Sometimes, asymptotic results are established as if the marginal distributions are known. In our paper, we focus on the original, rank-based empirical angular measure and we make no assumptions on the unknown marginal cumulative distribution functions except for their continuity.
\smallskip

\bgroup
\color{chgcol}
\noindent
\textbf{Concentration inequalities.} \quad
It is the main goal of this paper to carry out a non-asymptotic analysis of the empirical angular measure $\hPhi_p$, which, to the best knowledge, is the first of its kind. The concentration inequality established in our main result, Theorem~\ref{thm:conc}, states that with a certain large probability $1-\delta$ the estimation error is not larger than a certain small quantity depending, among other things, on $\delta$ and on the number, $k$, of sample points used in the definition of the estimator. The bound concerns the worst-case estimation error 
\begin{equation}
	\label{eq:max_dev}
	\sup_{A\in \mathcal{A}} \left| \hPhi_p(A)-\Phi_p(A) \right|
\end{equation}
over classes $\mathcal{A}$ of Borel subsets $A$ of the unit sphere satisfying certain properties. In particular, the complexity of $\mathcal{A}$ comes into play via the Vapnik--Chervonenkis dimension of a collection of sets constructed from $\mathcal{A}$. 

The result relies on two tools: first, the use of framing sets to capture certain random sets, as in \cite{einmahl2001nonparametric} and \cite{einmahl2009maximum}, although the framing sets are defined in a different manner here (see Section~\ref{sec:framing}); and second, a general concentration inequality for empirical processes indexed by rare events given in Theorem~\ref{thm:chaining}. The latter result is inspired by a similar inequality in \cite{GoixCOLT} but the difference is that now the constant in the bound is explicit. 

Although the angular measure $\Phi$ can be studied with respect to any norm on $\reals^d$, this study is limited to the ones most frequently used in practice, the $L_p$-norms for $1 \leq p \leq \infty$, i.e., for $x \in \reals^d$, 
\begin{equation}
	\label{eq: lpNorms}
	\norm{x}_p = 
	\begin{dcases}
		\left(|x_1|^p+\cdots+|x_d|^p\right)^{1/p} &\text{ if } 1 \le p < \infty, \\
		\max\left(|x_1|,\ldots,|x_d|\right) &\text{ if } p = \infty.
	\end{dcases}	 
\end{equation}

\smallskip
\egroup
\noindent
\textbf{Applications to statistical learning.} \quad
The concentration bounds for the empirical angular measure are leveraged to study two statistical learning problems: minimum-volume set estimation and binary classification in extreme regions. These two problems and the methods proposed to solve them were introduced in the conference papers~\cite{thomas17a} and~\cite{jalalzai2018binary}, respectively. In both cases, the method was based upon the empirical angular measure. However, the technical difficulties inherent to the rank transformation were ignored and the theoretical analysis was performed as if the marginal distributions are known. Here, we apply concentration inequalities for the empirical angular measure to obtain finite-sample performance guarantees for the rank-based methods.

We briefly describe the two learning problems and the role of the angular measure. For unsupervised anomaly detection, the learning task can be formulated as minimum-volume set estimation on the sphere, that is, the statistical recovery of a Borel set of the sphere with minimum volume but containing a given, large fraction of the total mass of the angular measure~\citep{thomas17a}. Any large observation whose angle lies outside the minimum-volume set is considered as a potential anomaly. Concentration inequalities for the uniform estimation error over a class of candidate sets enable us in Section~\ref{sec:appliMVsets} to get statistical guarantees on the minimum-volume set selected within the class on the basis of an estimator of the angular measure.

The second problem concerns binary classification in extreme regions. Empirical risk minimization, which is the main paradigm of statistical learning theory, tends to ignore the predictive performance of candidate classifiers in low-density regions of the input space. Instead, we focus on the probability of classification error in such extreme regions. By controlling the fluctuations of the empirical angular measure, we establish generalization bounds for classifiers obtained by minimizing the empirical classification error based on the most extreme input observations only. In Theorem~\ref{theo:deviationsClassif}, we state a bound on the supremum of the empirical risk over a class of candidate classifiers.
\smallskip

\noindent
\textbf{Outline.} \quad
The paper is organized as follows.  In Section~\ref{sec:notations}, the relevant notions pertaining to multivariate extreme value analysis are briefly recalled. The main result related to the non-asymptotic analysis of the empirical angular measure is formulated in Section~\ref{sec:traditionalEmpirical}. Section~\ref{sec:app} illustrates the application of the concentration inequalities to two statistical learning problems, minimum-volume set estimation and binary classification in extreme regions. Numerical experiments are presented in Section~\ref{sec:experiments}. A discussion in Section~\ref{sec:discussion} concludes the paper. Proofs and some auxiliary results, in particular a general concentration inequality for rare event probabilities, are deferred to the Appendices and the Supplement.

\section{Upper tail dependence} 
\label{sec:notations}

We set out some basics of multivariate extreme value theory (Sections~\ref{sec:RVmu} and~\ref{sec:angmeassup}) and recall a rank-based standardization procedure (Section~\ref{subsec:ranks}).
Equipped with these notions, we describe the empirical angular measure (Section~\ref{subsec:empang}). For background, we refer to monographs such as~\cite{resnick:1987, Beirlant05, deHaan2007extreme}.

\subsection{Regular variation and exponent measure}
\label{sec:RVmu}

Let $X = (X_1,\ldots, X_d)$ be a random vector with distribution $P$ and continuous marginal cumulative distribution functions $F_j(u) = \PP{X_j \le u}$ for $u \in \reals$. We standardize each component of $X$ to unit-Pareto margins by a combination of the probability integral transform and the quantile transform through $V_j = 1/(1 - F_j(X_j))$ for $j = 1, \ldots, d$.

The working hypothesis in this paper is that the resulting random vector $V = (V_1, \ldots, V_d)$ is multivariate regularly varying; this formalizes the assumption in point~(ii) in the introduction. Specifically, we assume that there exists a non-zero Borel measure $\mu$ on the punctured orthant $E = [0, \infty)^d \setminus \{(0, \ldots, 0)\}$ which is finite on Borel sets bounded away from the origin and such that
\begin{equation}
	\label{eq:standardRV}
	\lim_{t \to \infty} t\PP{t^{-1} V \in B} = \mu(B)
\end{equation}
for all Borel sets $B$ of $E$ bounded away from the origin and such that $\mu(\partial B) = 0$, with $\partial B$ the topological boundary of $B$. Convergence in \eqref{eq:standardRV} for all such $B$ is equivalent to measure convergence $t \PP{t^{-1} V \in \point} \to \mu(\point)$ as $t \to \infty$ in the space $\Mz$ of Borel measures on $E$ that are finite on Borel sets bounded away from the origin \citep{hult2006regular}. Specifically, we have $\lim_{t \to \infty} t \EE[f(t^{-1} V)] = \int_E f \ud \mu$ for every bounded and continuous real function $f$ on $E$ vanishing in a neighbourhood of the origin. Alternatively, multivariate regular variation can be described in the language of vague convergence of Radon measures on the compactified, punctured orthant $[0, \infty]^d \setminus \{(0, \ldots, 0)\}$ \citep{resnick:1987, resnick2007heavy}. 

The measure $\mu$ is referred to as the exponent measure because of its appearance in the exponent of the expression of the multivariate extreme value distribution to which the distribution of $V$ is attracted~\citep[Definition~6.1.7]{deHaan2007extreme}. The exponent measure is homogeneous: writing $\lambda A = \{ \lambda x : x \in A \}$ for $\lambda > 0$ and $A \subseteq \reals^d$, we have
\begin{equation}
	\label{eq:mu-homogeneous}
	\forall \lambda > 0, \qquad \mu(\lambda \point) = \lambda^{-1} \mu(\point).
\end{equation}
Its margins are standardized: by \eqref{eq:standardRV} and since each component $V_j$ is unit-Pareto distributed, we have 
\begin{equation}
	\label{eq:mu-margins}
	\forall y \in (0, \infty),\;  \forall j \in \{1, \ldots, d\},
	\qquad \mu(\{x \in E : x_j \ge y \}) = y^{-1}.
\end{equation}

\subsection{Angular measure}
\label{sec:angmeassup}

Recall $\norm{x}_p$ in~\eqref{eq: lpNorms} and let
\begin{equation}
	\label{eq:sphere}
	\sphere_p = \{x \in [0, \infty)^d : \norm{x}_p = 1 \},
\end{equation}
Consider the map $\theta_p:E \rightarrow \sphere_p$ that assigns to any vector $x\in E$ its `angle' $\theta_p(x) = x / \norm{x}_p$.

The angular measure $\Phi_p$ is defined as the push-forward measure by $\theta_p$ of the restriction of $\mu$ to $\{x \in E : \norm{x}_p \ge 1\}$: for any Borel set $A \subseteq \sphere_p$, we have
\begin{equation}
	\label{eq:angularMeasure}
	\Phi_p(A) = \mu(\mathcal{C}_A) 
	\quad \text{where} \quad
	\mathcal{C}_A = \{x \in E : \norm{x}_p \ge 1, \; \theta_p(x) \in A \}.
\end{equation}
	In view of the marginal standardization~\eqref{eq:mu-margins} and the ensuing identity~\eqref{eq:Phi-moments} below, the total mass $\Phi_p(\sphere_p) = \int_{\sphere_p} \norm{\theta}_p \, \diff \Phi_p(\theta)$ of the angular measure is finite and we have $1 \le \Phi_p(\sphere_p) \le d$.

The exponent measure $\mu$ is determined by the angular measure: by homogeneity~\eqref{eq:mu-homogeneous},
\[ 
\mu(\{x \in E : \norm{x}_p \ge u, \theta_p(x) \in A\}) = u^{-1} \Phi_p(A)
\]
for every $u > 0$ and every Borel set $A \subseteq \sphere_p$.
More generally, for any non-negative Borel measurable function $f$ on $E$, we have
\begin{equation}
	\label{eq:polar}
	\int_{E} f \, \diff \mu
	=
	\int_{\sphere_p} \int_0^\infty 
		f(r\theta) \, 
	\frac{\diff r}{r^2} \, \diff \Phi_p(\theta).
\end{equation}

The combination of the marginal standardization \eqref{eq:mu-margins} with the change-of-variable formula in \eqref{eq:polar} implies the identities
\begin{equation}
	\label{eq:Phi-moments}
	\forall j = 1, \ldots, d, \qquad
	\int_{\sphere_p} \theta_j \, \diff \Phi_p(\theta) = 1.
\end{equation}
In fact, any non-negative Borel measure $\Phi_p$ on $\sphere_p$ satisfying the moment constraints~\eqref{eq:Phi-moments} is the angular measure of some random vector $X$. Indeed, from such a measure $\Phi_p$ on $\sphere_p$, we can define a measure $\mu$ on $E$ through \eqref{eq:polar} and then consider the max-stable distribution with $\mu$ as exponent measure as in~\cite{de1977limit}. 

\subsection{Data standardization and ranks}
\label{subsec:ranks}

The component-wise transformation of $X$ to a vector $V$ with unit-Pareto margins is formalized by the map $\Vf:\rset^d\rightarrow [1,\; \infty]^d$, with
\begin{equation}
	\label{eq:vfunction}
	\forall x \in \reals^d, \qquad 
	\Vf(x) = \left(\frac{1}{1- F_1(x_1)}, \ldots, \frac{1}{1-F_d(x_d)} \right),
\end{equation}
with the convention $1/0=\infty$.
In this notation, we have $V = \Vf(X)$. 

Let $X_1, \ldots, X_n$ be an independent random sample from the distribution $P$ of $X$, with $X_i = (X_{i1},\ldots,X_{id})$. Since $P$ is unknown in practice,
it is replaced by its empirical version
$\loi_n(\point) = n^{-1}\sum_{i=1}^n \un\{X_i \in \point\}$, where $\un\{\event\}$ denotes the indicator variable of the event $\event$. 
In particular, the marginal cumulative distribution functions $F_j$ are substituted with their empirical counterparts
\begin{equation*}
	\widehat{F}_j(t) = \frac{1}{n}\sum_{i=1}^n \un\{X_{ij} \le t\}, \text{ for } t\in\rset \text{ and } 1\le j\le d,
\end{equation*}
in order to mimic the aforementioned standardization.

The empirically standardized sample points are
\begin{equation}
	\label{eq:Vhat}
	\widehat{V}_i = \hat{v}(X_i) = \bigl( \hat{v}_1(X_{i1}), \ldots, \hat{v}_d(X_{id}) \bigr), 
	\qquad i = 1, \ldots, n
\end{equation}
where $\hat{v}(x) = (\hat{v}_1(x_1), \ldots, \hat{v}_d(x_d))$ for all $x \in \reals^d$ and
\begin{equation}
	\label{eq:empcdf}
	\hat{v}_j(x_j) = \frac{1}{1 - \frac{n}{n+1} \hF_j(x_j)},
\end{equation}
for $j = 1, \ldots, d$ and $i = 1, \ldots, n$. The factor $\frac{n}{n+1}$ in \eqref {eq:empcdf} serves to avoid division by zero in case $x_j \ge \max(X_{1j}, \ldots, X_{nj})$.

We have $\hat{v}_j(X_{ij}) = 1 / (1 - R_{ij}/(n+1))$, where $R_{ij} = n \widehat{F}_j(X_{ij})$ is the rank of $X_{ij}$ among $X_{1j}, \ldots, X_{nj}$. Any statistic that is a function $\widehat{V}_1,\ldots,\widehat{V}_n$ depends on the data $X_1,\ldots,X_n$ only through the component-wise ranks. This will be the case for the empirical angular measure.

\subsection{The empirical angular measure}
\label{subsec:empang}

Let $\delta_x$ denote a point mass at $x$ and put
\begin{equation}
	\label{eq:hPn}
	\widehat{P}_n = \frac{1}{n} \sum_{i=1}^n \delta_{\widehat{V}_i},
\end{equation}
the empirical distribution of the pseudo-observations $\widehat{V}_1, \ldots, \widehat{V}_n$ in \eqref{eq:Vhat}.
The random measure $\widehat{P}_n$ can be legitimately considered as an estimator of the distribution of $V$.

The definition \eqref{eq:standardRV} of the exponent measure $\mu$ involves a limit as $t \to \infty$. Setting $t = n/k$ for $k \in \{ 1,\ldots,n \}$ such that both $k$ and $n/k$ are large yields the empirical exponent measure
\begin{equation}
	\label{eq:empiricalMu}
	\hmu(B) 
	= \tfrac{n}{k} \widehat{P}_n(\tfrac{n}{k} B)
	= \frac{1}{k} \sum_{i=1}^n \un \left\{ \widehat{V}_i \in \tfrac{n}{k} B \right\}
\end{equation}
for Borel sets $B \subseteq E$.
Consider a Borel set $A \subseteq \sphere_p$ and recall the angular measure $\Phi_p$ and the cone $\cone_A$ in~\eqref{eq:angularMeasure}.
In view of \eqref{eq:empiricalMu}, 
\emph{the empirical angular measure} \citep{einmahl2001nonparametric} is defined as 
\begin{equation}
	\label{eq:empiricalAngular}
	\hPhi_p(A) = \hmu(\cone_A) = 
	\frac{1}{k}\sum_{i=1}^n
	\un\left\{ \widehat{V}_i \in (n/k)\cone_A \right\}
	= \frac{1}{k} \sum_{i=1}^n \un \left\{ \norm{\widehat{V}_i}_p \ge n/k, \, \theta_p(\widehat{V}_i) \in A \right\}.
\end{equation}
It is the empirical version of the pre-asymptotic angular measure
\begin{equation*}
	t\PP{t^{-1} V \in \cone_A}
\end{equation*}
at level $t = n/k$.
In the bivariate case, the asymptotic distribution of the empirical angular measure has been investigated in case the sequence $k=k(n)$ satisfies $k \to \infty$ and $k/n \to 0$ as $n \to \infty$.
The max-norm was considered in~\cite{einmahl2001nonparametric}, while the $L_p$-norm for $p \in [1, \infty]$ was studied in~\cite{einmahl2009maximum} together with empirical likelihood methods to exploit the moment constraints~\eqref{eq:Phi-moments}.

\section{Concentration inequalities} 
\label{sec:traditionalEmpirical}

Recall the empirical angular measure $\hPhi_p$ in~\eqref{eq:empiricalAngular}. Our main result provides a concentration inequality for the uniform deviations
\begin{equation}
	\label{eq:suprema}
	\sup_{A \in \mathcal{A}} \left| \widehat{\Phi}_p(A) - \Phi_p(A) \right|.
\end{equation}
The supremum is taken over classes $\mathcal{A}$ of Borel sets $A$ of $\sphere_p$ in~\eqref{eq:sphere} satisfying appropriate assumptions. To bound the supremum, the empirical measure is rewritten in terms of random sets which are subsequently framed in between deterministic sets. We first explain the idea behind the framing approach (Section~\ref{sec:framing}) before stating our main theorem on concentration bound for the empirical angular measure (Section~\ref{subsec:nonparametricestimator}). We conclude with examples of collections $\cA$ (Section~\ref{subsec:examples}).

\subsection{Framing random sets}
\label{sec:framing}

Since the estimators depend on the data only through the ranks, they are invariant under increasing component-wise transformations. So although the marginal distributions $F_1,\ldots,F_d$ are unknown, we can and will nevertheless assume that they are unit-Pareto already. In that case, we have $v(x) = x$ for $x \in [1, \infty]^d$ in \eqref{eq:vfunction}, so that $V = v(X) = X$ and $V_i = v(X_i) = X_i$ for $i = 1, \ldots, n$. In particular, the distribution of $V = X$ is $P$.

Recall $\widehat{P}_n$ in \eqref{eq:hPn} and let $P_n = n^{-1} \sum_{i=1}^n \delta_{V_i}$ denote the empirical distribution of $V_1,\ldots,V_n$. Since $\widehat{V}_i = \hat{v}(X_i) = \hat{v}(V_i)$, we have $\widehat{P}_n = P_n \circ \hat{v}^{-1}$, that is, $\widehat{P}_n$ is the push-forward measure of $P_n$ by $\hat{v}$. Up to a scaling factor, the value of the empirical angular measure $\hPhi_p$ in a Borel set $A$ of $\sphere_p$ can thus be expressed as $P_n$ evaluated in the random set 
\begin{equation}
	\label{eq:hGammaA}
	\widehat{\Gamma}_A =
	\hat{v}^{-1}(\tfrac{n}{k} \cone_A) =
	\left\{ 
	x \in E : \
	\norm{\hat{v}(x)}_p \ge \tfrac{n}{k}, \;
	\theta_p(\hat{v}(x)) \in A
	\right\},
\end{equation}
where, as before, $\theta_p(y) = y / \norm{y}_p$ for $y \in E$. Indeed, we have
\[
\hPhi_p(A) 
= \tfrac{n}{k} \widehat{P}_n(\tfrac{n}{k} \cone_A)
= \tfrac{n}{k} P_n(\hat{v}^{-1}(\tfrac{n}{k} \cone_A))
= \tfrac{n}{k} P_n(\widehat{\Gamma}_A).
\]

Let $\cA$ be a class of Borel sets of $\sphere_p$.
Following in the footsteps of \cite{einmahl2001nonparametric} and \cite{einmahl2009maximum}, we construct for any $A\in\mathcal{A}$ two nested deterministic sets $\Gamma^-_A \subseteq \Gamma^+_A$ framing the cone $\cone_A$, i.e., such that  $\Gamma^-_A\subseteq \cone_A\subseteq \Gamma^+_A$, and such that on an event that occurs with high probability, we have 
\begin{equation}\label{eq:nested_emp}
	\forall A \in \cA, \qquad 
	\tfrac{n}{k} \Gamma^-_A \subseteq \widehat\Gamma_{A} \subseteq \tfrac{n}{k} \Gamma^+_A.
\end{equation}  

On that event, the signed error can then be bounded from above by 
\begin{align*}
	\hPhi_p(A) - \Phi_p(A)
	&= \tfrac{n}{k} P_n(\widehat{\Gamma}_A) - \mu(\cone_A) 
	\le \tfrac{n}{k} P_n(\tfrac{n}{k} \Gamma_{A}^+) - \mu(\Gamma_{A}^-) \\
	&\le \tfrac{n}{k} \left| P_n(\tfrac{n}{k} \Gamma_{A}^+) - P(\tfrac{n}{k} \Gamma_{A}^+) \right|
	+ \left| \tfrac{n}{k} P(\tfrac{n}{k} \Gamma_{A}^+) - \mu(\Gamma_{A}^+) \right|
	+ \mu(\Gamma_{A}^+ \setminus \Gamma_{A}^-).
\end{align*}
A lower bound can be derived in a similar way, yielding, on the same event,
\begin{equation}
	\label{eq:dec3}
	\left.
	\begin{aligned}
		\left| \hPhi_p(A) - \Phi_p(A) \right|
		&\le
		\max_{B \in \{\Gamma_{A}^+, \Gamma_{A}^-\}}
		\left| \tfrac{n}{k} P(\tfrac{n}{k} B) - \mu(B) \right| 
		& \text{(bias term)} \\	
		&\quad {} +
		\max_{B \in \{\Gamma_{A}^+, \Gamma_{A}^-\}}
		\tfrac{n}{k} 
		\left|
		P_n(\tfrac{n}{k} B) 
		- P(\tfrac{n}{k} B) 
		\right| \qquad 
		& \text{(stochastic error)}   \\	
		&\quad {} +
		\mu(\Gamma_{A}^+ \setminus \Gamma_{A}^-)
		& \text{(framing gap).} 
	\end{aligned}
	\quad
	\right\}
\end{equation}

Next we will introduce assumptions to enable the construction of the framing sets $\Gamma_A^-$ and $\Gamma_A^+$ together with a high-probability event on which \eqref{eq:nested_emp} holds. The task is then to control the three terms on the right-hand side of~\eqref{eq:dec3} uniformly over $A\in\cA$. The bias term will be left as such; controlling it is an entirely different subject requiring higher-order multivariate regular variation \citep{beirlant+e+g+g:2016, fougeres+dh+m:2015}; \chg{however, see Remark~\ref{rem:bias:handle}}. Under appropriate complexity assumptions on the class $\cA$ and the collection of framing sets, the stochastic error term can be uniformly bounded by means of the concentration inequality for tail empirical processes established in Theorem~\ref{thm:chaining}. Finally, the framing gap can be controlled by ensuring that the set $\Gamma_A^+ \setminus \Gamma_A^-$ is small.

\subsection{Concentration bounds for the empirical angular measure}
\label{subsec:nonparametricestimator}

The main result of this paper, Theorem~\ref{thm:conc}, is stated in this subsection. 
Let $\ball = [-1, +1]^d$ denote the closed unit ball in $\reals^d$ with respect to the sup-norm $\norm{\point}_\infty$.
For sets $A, B \subseteq \reals^d$ write $A + B = \{a + b : a \in A, b \in B\}$.
For $\eps > 0$ and $A \subseteq \reals^d$, we thus have $A + \eps \ball = \{ x \in \reals^d : \exists a \in A, \norm{x - a}_\infty \le \eps \}$.

\begin{assumption}[Subsets of the sphere]
	\label{ass:A}
	The class $\cA$ is a collection of non-empty Borel sets of $\sphere_p$ with the following properties:
	\begin{enumerate}[label=(\roman*)]
		\item There exists a countable collection $\cA_0 \subseteq \cA$ such that for every $A \in \cA$ there is a sequence $A_n \in \cA_0$ such that $\lim_{n \to \infty} \1\{x \in A_n\} = \1\{x \in A\}$ for every $x \in \sphere_p$.
		\item There exists $\tau \in (0, 1)$ such that
		\begin{equation}
			\label{eq:boundedfrom0}
			\forall A \in \cA, \qquad	A \subseteq \{ x \in \sphere_p : \ \min(x) > \tau \} =: \sphere_p^\tau.     
		\end{equation}
		\item There is a constant $c>0$ such that for any $A \in \cA$ and $\eps > 0$, there exist Borel subsets $A_+(\eps)$ and $A_-(\eps)$ of $\sphere_p$ satisfying
		\begin{equation}
			\label{eq:Phi+-}
			\Phi_p \bigl(A_+(\eps) \setminus A_-(\eps)\bigr) \le c \, \eps \,
		\end{equation}
		together with the inclusions
		\begin{equation}
			\label{eq:Ahull:inclusions}
			\bigl(A_-(\eps) + \eps \ball\bigr) \cap \sphere_p \subseteq A
			\qquad \text{and} \qquad
			(A + \eps \ball) \cap \sphere_p \subseteq A_+(\eps).
		\end{equation}
	\end{enumerate}
\end{assumption}

Condition~(i) amounts to the pointwise measurability of the indicators $\left\{ \1\{ \cdot \in A \} : A \in \cA\right\}$ \citep[Example~2.3.4]{vdvaart+w:1996} and ensures that for any $A \in \cA$ we can find $A_n \in \cA_0$ such that for any finite Borel measure $\nu$ on $\sphere_p$ we have $\nu(A) = \lim_{n \to \infty} \nu(A_n)$. The suprema over $\cA$ in Eq.~\eqref{eq:suprema} are therefore equal to those over $\cA_0$ and are thus measurable.
Condition (ii) stipulates that the elements of the class $\mathcal{A}$ are bounded away from the $2^d-1$ faces of the sphere. Though it may be considered as restrictive at first glance, we point out that maximal deviations of the empirical angular measure over classes of Borel subsets of a given face of the sphere correspond to maximal deviations of the empirical angular measure for the corresponding components of~$X$. 
The crucial point in Condition~(iii) is inequality~\eqref{eq:Phi+-}, bounding the measure of the difference between the inner and outer $\eps$-hulls of $A \in \cA$. 
The inequality is satisfied if it holds with $\Phi_p$ replaced by the $(d-1)$-dimensional Lebesgue measure on $\sphere_p$ and $\Phi_p$ has a bounded density on $\sphere_p$ with respect to this measure.

In order to deal with the estimation error stemming from the use of the marginal empirical distribution functions in \eqref{eq:empcdf}, we frame the cones $\cone_A$ in Eq.~\eqref{eq:angularMeasure} between slightly smaller and larger sets built from the inner and outer hulls.
For $A \in \cA$, $\sigma\in\{-,+\}$ and $r, h > 0$, define
\begin{equation*}
	\Gamma_{A}^{\sigma}(r,h) = \left\{
	x \in [0, \infty)^d : \
	\norm{x}_p \ge \tfrac{1}{r}, \;
	\theta_p(x) \in A_{\sigma}(h \norm{x}_p)
	\right\}.
\end{equation*}
For all $A\in \cA$ and $h>0$, we have $\Gamma_{A}^-(r,h)\subseteq \cone_A$ for $0 < r\leq 1$ and $\cone_A\subseteq \Gamma_{A}^+(r,h)$ for $r\geq 1$.
The upper confidence bound at level $1-\delta$ for the maximal deviation \eqref{eq:max_dev} stated in Theorem~\ref{thm:conc} below is derived from the decomposition~\eqref{eq:dec3} with framing sets $\Gamma_{A}^+(r_+,h)$ and $\Gamma_{A}^-(r_-,h)$ for specific choices of $r_+$, $r_-$ and $h$.

To control the stochastic error, we need a handle on the complexity of the collection of framing sets.
The Vapnik--Chervonenkis (VC) dimension of a collection $\cF$ of subsets of some set $\cX$ is the supremum (possibly infinite) of the set of positive integers $n$ with the property that there exists a subset $\{x_1,\ldots,x_n\}$ of $\cX$ with cardinality $n$ such that all $2^n$ subsets of $\{x_1,\ldots,x_n\}$ can be written in the form $\{x_1,\ldots,x_n\} \cap F$ for some $F \in \cF$.
The VC-dimension is a central quantity in statistical learning and empirical process theory as it lies at the basis of many concentration inequalities for empirical distributions, see for instance \cite{vdvaart+w:1996} and \cite{bousquet2004introduction}.

\begin{assumption}
	\label{ass:VF}
	For any $r, h > 0$, the collections $\{ \Gamma_A^-(r, h) : A \in \cA \}$ and $\{ \Gamma_A^+(r, h) : A \in \cA \}$ of subsets of $E$ have finite VC-dimension.
\end{assumption}

In the framing sets $\Gamma_A^\sigma(r, h)$, the tolerance $\eps$ for the angle $\theta_p(x)$ in the inner and outer hulls of a set $A$ depends on the norm $\norm{x}_p$.
Therefore, in Assumption~\ref{ass:VF} it is not sufficient to assume that the collection $\cA$ or even the collection of inner and outer hulls has finite VC-dimension.
In Section~\ref{subsec:examples} we provide a realistic example of an angular class $\mathcal{A}$---namely a class defined by linear inequality constraints---where Assumptions~\ref{ass:A} and~\ref{ass:VF} are satisfied.

\begin{theorem}
	\label{thm:conc}
	Let $X_1, \ldots, X_n$ be an independent random sample from a distribution $P$ on $\reals^d$ with continuous margins \chg{and let $P_V$ denote the  distribution of $V_1 = v(X_1)$ where $v$ is defined in~\eqref{eq:vfunction}}. Let $\mu$ be an exponent measure having angular measure $\Phi_p$ with respect to $\norm{\,\cdot\,}_p$ for some $p \in [1,\infty]$.
	Let $\cA$ be a collection of Borel sets of $\sphere_p$ such that Assumptions~\ref{ass:A} and~\ref{ass:VF} are fulfilled.
	Let $n, k, \rho$ be such that $\tau n > k > (3 \vee 6c)$ and $k/n < \rho < \tau$, where the constant $c>0$ comes from point~(iii) of Assumption~\ref{ass:A}. Let $\delta \in (0,1)$.
	Put
	\[
	\Delta_1 = \Delta_1(k,\delta,\rho) 
	= \frac{1}{\sqrt{k\rho}} \left(\cst + 2 \sqrt{\log((d+1)/\delta)} \right) + \frac{2}{3k} \log((d+1)/\delta). 
	\]
	For $\sigma \in \{-, +\}$ and $A \in \cA$, abbreviate $\Gamma_A^\sigma = \Gamma_A^\sigma(r_\sigma, 3\Delta_1)$ where $r_\pm = 1 \pm \Delta_2$ with $\Delta_2 = 4(\Delta_1 + 1/k)$.
	Then, with probability at least $1-\delta$, the empirical angular measure satisfies
	\begin{equation}\label{eq:conc1}
		\begin{split}
			\sup_{A\in \cA} \left| \hPhi_p(A)-\Phi_p(A) \right|
			&\le \sup_{A\in \cA,\; \sigma\in\{+,-\}} \left\vert \tfrac{n}{k} P_V(\tfrac{n}{k}  \Gamma_{A}^{\sigma}) - \mu(\Gamma_{A}^{\sigma})\right\vert \\
			&\qquad + \sqrt{\frac{d^{1+1/p}(1+\Delta_2)}{k}} \left( \cst \sqrt{V_\F} + 2 \sqrt{\log((d+1)/\delta)} \right) + \frac{2 \log \left( \tfrac{d+1}{\delta} \right)}{3k}  \\
			&\qquad + 2d\Delta_2 + 3c\big(\log(d/3c) - \log(\Delta_1) + 1 \big) \Delta_1,
		\end{split}
	\end{equation}
	provided $k$ is sufficiently large so that $(2/k) \le \Delta_1 < (1/4 - 1/k) \wedge (1/(3c))$ and $\rho/(1-\Delta_1\rho) \le \tau$ and where $V_{\cF}$ is the VC-dimension of the collection
	\begin{equation}
		\label{eq:cF:union}
		\left\{ \Gamma_A^- : A \in \cA \right\} \cup 
		\left\{ \Gamma_A^+ : A \in \cA \right\}.
	\end{equation}
\end{theorem}

\bgroup
\color{chgcol}
The proof is given in Appendix~\ref{supp:proofs:main} in the Supplement. 

Note that $r_-\leq 1\leq r_+$, so that $\Gamma_A^-(r_-, h) \subseteq \cone_A \subseteq \Gamma_A^+(r_+, h)$ for all $h > 0$. In the decomposition~\eqref{eq:dec3}, the framing gap is bounded by $2d\Delta_2 + 3c\big(\log(d/3c) - \log(\Delta_1) + 1 \big) \Delta_1$ while the estimation error is bounded by 
\[
\sqrt{\frac{d^{1+1/p}(1+\Delta_2)}{k}} \left( \cst \sqrt{V_\F} + 2 \sqrt{\log((d+1)/\delta)} \right) + \frac{2}{3k} \log((d+1)/\delta)
\] 
with probability larger than $1-\delta$.
This term is the one appearing in the concentration inequality for empirical processes over collections of sets of extreme values proved in Theorem~\ref{thm:chaining} applied to the collection $\{ (n/k) \Gamma_{A}^{\sigma}(r_\sigma, 3\Delta_1) :\; \sigma\in\{+,-\},\;  A \in \cA \}$.
By Assumption~\ref{ass:VF}, the collection in \eqref{eq:cF:union} has a finite VC-dimension: For two collections $\cF_1$ and $\cF_2$ of subsets of a set $\cX$ with finite VC-dimensions $d_1$ and $d_2$, respectively, the VC dimension of $\cF_1 \cup \cF_2$ is bounded by $d_1 + d_2 + 1$ \citep[Exercise~3.24]{mohri2018foundations}. 

Note that the bound is a decreasing function of the norm index $p$. It is related to the shape of the associated sphere $\sphere_p$, which is easier to work with if more aligned with the axes. The faces of the unit sphere induced by the supremum norm are parallel to the coordinate axes, a property that links up well with the use of component-wise ranks, and has the advantage that its value is not affected by the precise values of the smaller coordinates of~$x$. Extensions to other $p$-norms can be performed through the use of the equivalences of norms
\begin{equation}
	\label{eq: equiv norms}
	\forall x \in \reals^d, \qquad \norm{x}_\infty \leq \norm{x}_p \leq d^{1/p} \norm{x}_\infty.
\end{equation}

For fixed $\rho$ and $\delta$, when ignoring the bias term, the bound on the right-hand side of \eqref{eq:conc1} is of the order
\begin{equation} 
	\label{eq:conc:rate}
	\Delta_1 \log(1/\Delta_1) 
	=  \Oh \left( \frac{\log k}{\sqrt{k}} \right), \text{ as } k \to \infty.
\end{equation}
Even though $\Delta_1$ and $\Delta_2$ are of the optimal order $\Oh(1/\sqrt{k})$, the factor $\log k$ shows up: its presence is caused by the framing gap, the third line in both~\eqref{eq:dec3} and~\eqref{eq:conc1}. Asymptotic theory for the empirical angular measure in the bivariate case \citep{einmahl2001nonparametric, einmahl2009maximum} suggests a learning rate of the order $1/\sqrt{k}$; whether our logarithmic term is an artifact of our analysis or rather a genuine property of the estimator remains an open problem.
\egroup
	
	\begin{remark}[Other concentration inequalities]
		\label{rem:other}
		The constant $56$ appearing in the error stems comes from the use of chaining techniques as in~\cite[Theorems~1.16--17]{lugosi:2002}. 
			Relying instead on concentration inequalities for rare events derived recently in~\cite{lhaut2021uniform}, it is possible to reduce the constant at the price of an additional logarithmic factor $\sqrt{\log k}$, which, for realistic values of $k$, may well be smaller than the constant involved in our bound. However, the bound would become more complicated and less accurate from an asymptotic point of view. 

			The term $\Delta_1$ in Theorem~\ref{thm:conc} comes from the application of Theorem~\ref{thm:chaining} to the tails of the empirical margins $\widehat{F}_j$. As observed by an anonymous Referee, other concentration inequalities exist for such one-dimensional tail empirical processes, see for instance Inequality~1 on page~446 in~\cite{shorack+wellner2009}. For finite samples, some of these inequalities may be sharper than the one we used; nevertheless, the convergence rate as a function of $k$ would not improve since chaining is already optimal in that respect.	
	\end{remark}

	\begin{remark}[Bias term and penultimate angular measure]
	\label{rem:bias}
		As Theorem~\ref{thm:conc} is not concerned with asymptotics, we did not actually have to assume that $\Phi_p$ is the angular measure associated $P_V$. The link between $P_V$ and $\Phi_p$ is quantified instead by the bias term $\sup_{A,\sigma} | \frac{n}{k} \, P_V(\frac{n}{k} \Gamma_A^\sigma) - \mu(\Gamma_A^\sigma) |$. Even if $P_V$ has an angular measure of its own, it may be different from the one appearing in the theorem. This flexibility allows for viewing the empirical angular measure as an estimator of a penultimate angular measure, that is, the one for which the induced bias term is minimal.
	\end{remark}

\begin{remark}[Controlling the bias term]
\label{rem:bias:handle}
	\bgroup
	\color{chgcol}
	For absolutely continuous models, a primitive condition on the probability density function allows to control the bias term in Theorem~\ref{thm:conc}. Let $P_U$ denote the distribution of $U = (1-F_1(X_1),\ldots,1-F_d(X_d)) = \iota(V)$ on $[0, 1]^d$ where $\iota : (0, \infty)^d \to (0, \infty)^d$ is defined by $\iota(x) = (x_1^{-1}, \ldots, x_d^{-1})$. Assume that $P_U$ is absolutely continuous with density $p_U$ and that the measure $\Lambda = \mu \circ \iota^{-1}$ (the push-forward of $\mu$ by $\iota$) is absolutely continuous with Lebesgue density $\lambda$ on $(0, \infty)^d$. Then
	\begin{equation}
	\label{eq:biasbound}
		\sup_{A \in \cA, \sigma \in \{+,-\}} \left| 
			\tfrac{n}{k} \, P_V(\tfrac{n}{k} \Gamma_A^\sigma) 
			- \mu(\Gamma_A^\sigma) 
		\right|
		\le
		\int_{(0, \infty)^d} 
			\1\left\{\min(y) \le d^{1/p} r_+\right\} \,
			\left| \left(\tfrac{k}{n}\right)^{d-1} p_U(\tfrac{k}{n}y) - \lambda(y) \right| \,
		\diff y.
	\end{equation}
	By way of example, consider the multivariate Cauchy density restricted to $(0, \infty)^d$ with probability density function
	\[
		f(x) = 2^d \Gamma(\tfrac{d+1}{2}) \pi^{-(d+1)/2} \left(1+x_1^2+\cdots+x_d^2\right)^{-(d+1)/2},
		\qquad x \in (0, \infty)^d,
	\]
	with limit density
	\[
		\lambda(x) = 2^{d-1} \Gamma(\tfrac{d+1}{2}) \pi^{-(d-1)/2} x_1^{-2} \cdots x_d^{-2} \left( x_1^{-2} + \cdots + x_d^{-2} \right)^{-(1+d)/2}, 
		\qquad x \in (0, \infty)^d.
	\]
	In this case, the bound \eqref{eq:biasbound} is of the order $\Oh(k/n)$ as $k = k_n \to \infty$ in such a way that $k/n \to 0$. Detailed calculations are given in Appendix~\ref{sec:appendixBias} in the Supplement.
	\egroup
	\end{remark}

	\subsection{Examples of collections of subsets of the sphere}
	\label{subsec:examples}
	
	This sections aims at providing examples of classes $\cA$ related to a wide range of statistical machine learning algorithms (such as logistic regression,  classification and regression trees or linear discriminant analysis) for which Assumptions~\ref{ass:A} and~\ref{ass:VF} are satisfied. \chg{Proofs are deferred to Appendix~E in the Supplement.}
	
	Recall $\sphere_p^\tau$ in \eqref{eq:boundedfrom0}.
	The scalar product and the Euclidean norm on $\reals^d$ are denoted by $\inpr{x,y}$ and $\norm{x}_2 = \sqrt{\inpr{x,x}}$, respectively.
	
	\begin{example}[Linear restrictions]
	\label{ex:linear}
		Fix $\tau \in (0, 1)$ and consider the collection 
		\[ 
		\cA = \left\{ 
		A_{a, \beta, \tau} : \
		(a, \beta) \in \reals^d \times \reals, \; \norm{a}_2 = 1
		\right\} 
		\]
		of Borel subsets of $\sphere_p^\tau$ defined via
		\begin{align*}
			A_{a, \beta} &= \{ x \in \sphere_p : \ \inpr{a, x} \le \beta \}, &
			A_{a, \beta, \tau} &= A_{a, \beta} \cap \sphere_p^\tau.
		\end{align*}
	Then $\cA$ satisfies Assumption~\ref{ass:VF}, and, if $\Phi_p$ has a bounded Lebesgue density on $\sphere_p$, also Assumption~\ref{ass:A}.
	\end{example}

	\begin{example}[Stability under intersections and unions]
	\label{ex:intersec-union}
	\bgroup
	\color{chgcol}
		Let $\cA_1$ and $\cA_2$ be two collections of Borel subsets of $\sphere_p$ that satisfy Assumptions~\ref{ass:A} and~\ref{ass:VF}. Then the same is true for the collection of intersections
		\[
		\cA_1 \sqcap \cA_2 = \left\{ 
		A_1 \cap A_2 : \ A_1 \in \cA_1, \, A_2 \in \cA_2 
		\right\}
		\]
		and for the collection of unions
		\[
		\cA_1 \sqcup \cA_2 
		= \left\{ 
		A_1 \cup A_2 : \ A_1 \in \cA_1, \, A_2 \in \cA_2 
		\right\}.
		\]
		In combination with Example~\ref{ex:linear}, this property covers classifiers built from decision trees with a given depth. The leaves of such a tree correspond to unions of rectangles, the maximum number of rectangles in the union being determined by the tree depth.
	\egroup
	\end{example}

	\section{Applications to statistical learning} 
	\label{sec:app}
	\chg{
	We illustrate how a concentration inequality such as the one in Theorem~\ref{thm:conc} is useful to establish sound non-asymptotic guarantees for the validity of certain statistical learning procedures relying on the empirical angular measure and recently introduced in the literature. 
	In Section~\ref{sec:appliMVsets}, after introducing some minimal background about anomaly detection in extreme regions via minimum-volume set estimation, we show how our main results bring immediate guarantees on this matter. 
	In Section~\ref{sec:appliClassif}, we recall the whys and wherefores of classification in extreme regions and leverage the techniques developed in Section~\ref{sec:traditionalEmpirical} to control the excess risk of a specific empirical risk minimizer targeting the tail region of the covariate space.}
	
	\subsection{Minimum-volume set estimation}
	\label{sec:appliMVsets}
	\chg{
	To illustrate the usefulness of bounds on the supremum in \eqref{eq:max_dev}, consider the problem of estimating a \emph{minimum-volume set} \citep{EinmahlMason92}, such sets extending the notion of univariate quantiles. A minimum-volume set at level $\alpha$ is a subset of the sample space of minimum (Lebesgue) volume, constrained to contain a probability mass of at least $\alpha$. There are fruitful connections between minimum-volume sets and semi-supervised anomaly detection, where data are available from the majority class only and the goal is to construct a decision function delimitating the extent of the normal region. In a Neyman--Pearson framework, an optimal anomaly detection procedure at a certain level $ 0 < 1-\alpha \ll 1$ would declare abnormal any new point such that no minimum-volume set of level $\alpha$ contains it \citep{blanchard2010}. In the context of anomaly detection, the tail of the random vector under scrutiny is of particular interest because many anomalies correspond to unusually large values of at least one component. However, it may not be appropriate to declare as abnormal all such points and a finer analysis of the tails can improve the overall performance of an anomaly detection algorithm. For instance, consider a complex infrastructure monitored by several physical variables. Raising an alert at each extreme value of one of its physical variables can lead to high false alarm rates. A way to reduce this false alarm rate is to study the multivariate distribution of the set of observations such that at least one of their variables is large. This framework can be useful in a wide variety of applications (e.g., fraud detection, safety in aeronautics), where the control of the false alarm rate is crucial (given the cost of safety inspections), thus making the detection of anomalies among extremes undeniably relevant.}
	
	\chg{
	In this section, we follow in the footsteps of \cite{thomas17a} who consider the problem of constructing sets of relatively high probability in regions of the kind $\{x \in \RR^d: \|x\| > t \}$ for large values of $t$, under regular variation assumptions. Their statistical analysis is limited to the ideal case where the marginal distributions are known. We extend their guarantees in order to encompass the influence of the rank transformation.}
	
	In the context of the angular measure for multivariate upper extremes, the question is to find a Borel set $\Omega$ of the unit sphere $\sphere_p$ in $[0, \infty)^d$ with minimal volume $\lambda(\Omega)$---with $\lambda$ some reference measure such as the $(d-1)$-dimensional Hausdorff measure---but still having content $\Phi_p(\Omega)$ not smaller than some pre-specified lower limit $\alpha \in (0, \Phi_p(\sphere_p))$.
	In \cite{thomas17a}, such a set is used for the purpose of (unsupervised) anomaly detection in extreme regions. As $\Omega$ is supposed to contain a large fraction $\Phi_p(\Omega)/\Phi_p(\sphere_p)$ of the possible directions of extreme points, a new such point is deemed to be a potential anomaly if it lies in a direction outside $\Omega$. The fact that $\Omega$ has minimal volume $\lambda(\Omega)$ means that the critical region $\sphere_p \setminus \Omega$ to detect suspicious points is as large as possible.
	
	As $\Phi_p$ is unknown, $\Omega$ needs to be learned from a training sample.
	Although $\Omega$ may be characterized as a certain super-level set of the density of $\Phi_p$ with respect to the reference measure $\lambda$, a more practical approach than estimating this density, especially in high dimensions, is to limit the search to an algorithmically manageable collection $\cA$ of Borel subsets of $\sphere_p$.
	Let $\hat{\Phi}_p$ be any estimator of $\Phi_p$, not necessarily the empirical angular measure.
	Following the logic in \cite{ScottNowak06}, let $\hat{A}$ solve the empirical angular minimum-volume set problem
	\[
	\min \bigl\{ \lambda(A) : \ A \in \cA, \hat{\Phi}_p(A) \ge \alpha - \psi \bigr\}
	\]
	where $\psi \in (0, \alpha)$ is a tolerance parameter.
	The price to pay for having to estimate the angular measure is that the minimal required content $\alpha$ has been relaxed to $\alpha-\psi$.
	
	Bounds on the largest estimation error of $\hat{\Phi}_p$ over $\cA$ are helpful to provide probabilistic guarantees for $\hat{A}$.
	Suppose that, on some event $\mathcal{E}$, we have
	\begin{equation}
		\label{eq:sup:psi}
		\sup_{A \in \cA} \left|\hat{\Phi}_p(A) - \Phi_p(A)\right| \le \psi.
	\end{equation}
	Then, on the same event $\mathcal{E}$, we obviously have
	\begin{equation}
		\label{eq:MV:PhihA}
		\Phi_p(\hat{A}) \ge \hat{\Phi}_p(\hat{A}) - \psi \ge \alpha - 2\psi
	\end{equation}
	as well as
	\begin{equation}
		\label{eq:MV:lambdaA}
		\lambda(\hat{A}) \le \inf \left\{ \lambda(A) : \ A \in \cA, \, \Phi_p(A) \ge \alpha \right\}.
	\end{equation}
	Indeed, on $\mathcal{E}$, the collection $\{ A \in \cA : \Phi_p(A) \ge \alpha \}$ is contained in $\{A \in \cA : \hat{\Phi}_p(A) \ge \alpha - \psi \}$ so that the infimum of $\lambda(A)$ for $A$ in the latter collection must be the smaller one.
	By~\eqref{eq:MV:PhihA}, the empirical solution $\hat{A}$ is guaranteed to have at least content $\alpha - 2\psi$ under $\Phi_p$, while according to~\eqref{eq:MV:lambdaA}, the volume of $\hat{A}$ is smaller than the one of the actual minimum-volume set under $\Phi_p$. 
	
	Concentration inequalities for the supremum of $|\hat{\Phi}_p(A) - \Phi_p(A)|$ over $A \in \cA$ provide the existence, for a given $\delta > 0$, of a scalar $\psi \equiv \psi(\delta)$ such that \eqref{eq:sup:psi} holds on an event with probability at least $1-\delta$. It follows that, with high probability, the empirical minimum-volume set $\hat{A}$ satisfies~\eqref{eq:MV:PhihA} and~\eqref{eq:MV:lambdaA}.
	Provided $\psi$ and $\delta$ are both small, the combination of both properties justifies the use of $\hat{A}$ as an approximation to the true but unknown minimum-volume set under $\Phi_p$.
	For the empirical angular measure, a valid choice for the tolerance parameter $\psi(\delta)$ is given by the upper bound in Theorem~\ref{thm:conc}.

	\subsection{Classification in extreme regions}
	\label{sec:appliClassif}
	\chg{
	We apply Theorem~\ref{thm:conc} to binary classification in extreme regions. Classification is arguably one of the most studied problems in the statistical learning literature. Most existing guarantees are formulated in terms of a risk which is an integrated version of the loss function over the whole covariate space. However, the local performance of a global classifier is not necessarily guaranteed in low probability regions of the covariate space, typically in regions corresponding to an exceedance by one component of a high quantile---where few training points are available---or outside the convex envelope of the training set. In other words, the risk of an error conditional to the norm of the input being large is not adequately controlled in the classical setting. Nevertheless, in a wide variety of applications, ranging from finance/insurance to environmental sciences through teletraffic data analysis for instance, extreme observations of the covariates are of crucial importance.}
	
	\chg{
	In this section, we adopt the framework originally proposed in~\cite{jalalzai2018binary}, who formalize this argument and propose a risk minimization strategy aiming at improving performance of classification algorithms on such regions. In~\cite{jalalzai2018binary}, the marginal distributions of the predictor variables were assumed to be known, so that a standardized vector with exact unit-Pareto margins is observable. Here, we rather assume that the margins are unknown and employ a rank-based standardization instead.}
	
	First we recall the set-up of \cite{jalalzai2018binary}.
	Consider a random pair $(V, Y)$ where $Y$ in $\{-1, 1\}$ is the label to be predicted and $V$ in $[0, \infty)^d$ is the vector of predictors (features).
	The goal is to learn a classifier $g: [0, \infty)^d \to \{-1,1\}$ such that the classification risk for feature vectors $V$ far away from the origin is small.
	Let $\varrho = \PP{Y = 1}$ and assume $0 < \varrho < 1$.
	
	The starting point in \cite{jalalzai2018binary} is to assume a conditional version of the regular variation condition~\eqref{eq:standardRV}: there exist non-zero Borel measures $\mu_+$ and $\mu_-$ on $E = [0, \infty)^d \setminus \{0\}$ that are finite on Borel sets bounded away from the origin and such that
	\begin{equation}
		\label{eq:conditional_RV}
		\lim_{t \to \infty} t \PP{t^{-1} V \in B \given Y = \sigma\,1   }
		= \mu_{\sigma}(B)
	\end{equation}
	for $\sigma \in \{-, +\}$ and Borel sets $B \subseteq E$ bounded away from the origin satisfying $\mu_\sigma(\partial B)= 0$.
	The angular measure associated to the $L_p$-norm of $\mu_{\sigma}$ is 
	\[ 
	\Phi_p^\sigma(A) = \mu_{\sigma}(\cone_A), 
	\qquad \text{for $\sigma \in \{-, +\}$ and Borel sets $A \subseteq \sphere_p$}, 
	\]
	with $\cone_A$ as in \eqref{eq:angularMeasure}.
	By \eqref{eq:conditional_RV}, the unconditional distribution of $V$ is regularly varying as in \eqref{eq:standardRV} with limit measure $\mu = \varrho \mu_+ + (1-\varrho)\mu_-$ and angular measure $\Phi_p = \varrho \Phi_p^+ + (1-\varrho) \Phi_p^-$.
	
	In \cite{jalalzai2018binary} it was assumed that an independent random sample $\{(V_i, Y_i)\}_{i=1}^n$ from the distribution of $(V, Y)$ is given.
	Here, instead, the set-up is that we observe an independent random sample $\{(X_i, Y_i)\}_{i=1}^n$ from the distribution of $(X,Y)$ and that~\eqref{eq:conditional_RV} holds with $V = \Vf(X)$, where  $\Vf$ is defined via the probability integral transform in~\eqref{eq:vfunction}.
	This means that the classifier will have to be learned from the pairs $(\widehat{V}_i, Y_i)$ where $\widehat{V}_i = \hat{v}(X_i)$ in \eqref{eq:Vhat} is based on the rank transform.
	
	We emphasize that the marginal distribution functions $F_j$ in the definition of $\Vf$ are not conditioned upon $Y$.
	Indeed, for a new observation, marginal standardization will have to be carried out without the knowledge of the label to be predicted. 
	
	In \cite{jalalzai2018binary} the focus  is on the conditional classification risk above level $t > 0$ of a classifier $g: [0, \infty)^d\setminus\{0\}\to \{-1,1\}$ defined on the standardized input $V$:
	\begin{equation}
		\label{eq:extreme-risk}
		L_t^{\text{cond}}(g)
		=\PP{Y\neq g(V)\mid \norm{V}_p >t },
	\end{equation}
	Let $\cG$ be a pre-defined family of classifiers $g$ and let $L_t^{\text{cond}^*} = \inf_{g \in \cG} L_t^{\text{cond}}(g)$ be the smallest conditional classification risk for classifiers in $\cG$.
	The purpose is to learn from the training sample a classifier $\hat g \in\cG$ such that for large~$t$ the excess risk $L_t^{\text{cond}}(\hat g) - L_t^{\text{cond}*}$ is small.

	In \cite{jalalzai2018binary} it is argued that, asymptotically, attention can be restricted to \emph{angular classifiers}~$g$, that is, for which $g(x) = g(\theta_p(x))$ with $\theta_p(x) = x/\norm{x}_p$ for $x \in E$.
	Their analysis involves  a  random pair $(V_\infty, Y_\infty)$ whose distribution is the weak limit as $t \to \infty$ of $(t^{-1} V, Y)$ conditionally on $\norm{V}_p > t$, a limit which exists thanks to \eqref{eq:conditional_RV}.
	Let $\eta(x) = \PP{Y=1 \given V=x}$ denote the regression function of $(V, Y)$ and let $\eta_\infty(x) = \PP{Y_\infty = 1 \given V_\infty=x}$ be the one of $(V_\infty, Y_\infty)$.
	The respective Bayes classifiers are 
	\begin{equation}
		\label{eq:Bayesclassifiers}
		\begin{aligned}
			g^*(x) &= \un\{ \eta(x) \ge 1/2 \}, \\
			g_\infty^*(x) &= \un\{ \eta_\infty(x) \ge 1/2 \}.
		\end{aligned}
	\end{equation}
	Note that $g^*$ minimizes the conditional risk $L_t^{\text{cond}}$ for any $t > 0$.
	Assume that when $\norm{x}_p$ is large, $\eta(x)$ and $\eta_\infty(x)$ are uniformly close:
	\begin{equation}
		\label{eq:assumJalal_regressionFunction}   
		\sup_{x \in [0, \infty)^d: \norm{x}_p \ge t} \left|\eta(x) - \eta_\infty(x) \right|\to 0 \quad \text{ as } t\to\infty.
	\end{equation}
	Then by Theorem~1 in \cite{jalalzai2018binary}, (i) the asymptotic Bayes classifier $g_\infty^*$ is angular, and (ii) the latter's excess conditional risk over the actual Bayes classifier $g^*$ vanishes in the limit, that is, $L_t^{\text{cond}}(g_\infty^*) - L_t^{\text{cond}}(g^*) \to 0$ as $t \to \infty$.
	
	These properties motivate restricting the search to a class $\cG$ of candidate classifiers $g$ depending on the angle only. 
	Theorem~2 in \cite{jalalzai2018binary} provides a concentration inequality for the excess risk of the empirical risk minimizer $\hat{g}_k \in \cG$ learned from a sample $\{(V_i, Y_i)\}_{i=1}^n$, using only those points for which $\norm{V_i}_p$ belongs to a top fraction among those observed.
	Here, we intend to do the same but for the rank-based transformed feature vectors $\widehat{V}_i = \hat{v}(X_i)$ in \eqref{eq:Vhat}.
	\smallskip
	
\noindent
\textbf{Classification risk and angular measure.} \quad
	For $g$ in a class of angular classifiers $\cG$, recall the conditional classification risk $L_t^{\text{cond}}(g)$ above level $t$ in \eqref{eq:extreme-risk} and define its unconditional version by
	\begin{equation}
		\label{classifrisk-transfoDependent}
		L_t(g) 
		= t \PP{\norm{V}_p \ge t} L_t^{\text{cond}}(g) 
		= t \PP{g(V) \neq Y , \, \norm{V}_p \geq t}.
	\end{equation}
	The multiplicative factor $t \PP{ \norm{V}_p \ge t  } $ converges to $\Phi_p(\sphere_p)$ and does not change the minimizer in the class $\cG$. Working with the unconditional version $L_t(g)$ rather than with $L_t^{\text{cond}}(g)$ simplifies the analysis that follows.

	In view of Assumption~\ref{ass:A} required in  Section~\ref{sec:traditionalEmpirical}, we exclude from our empirical risk minimization (ERM) strategy those feature vectors whose angle (after standardization) is too close to the boundary of the unit sphere.
	Let $\tau \in (0, 1)$ and recall $\sphere_p^\tau$ in \eqref{eq:boundedfrom0}. 
	We have
	\begin{equation}
		\label{eq:Ltgtau}
		L_t(g) =  L_t^{>\tau}(g) +  L_t^{\le\tau}(g)
	\end{equation}
	with
	\begin{align*}
		L_t^{>\tau}(g) & =  t \PP{g(V) \neq Y , \, \theta_p(V)\in\sphere_p^\tau , \, \norm{V}_p \geq t}, \\
		L_t^{\le\tau}(g) & = t \PP{g(V) \neq Y , \, \theta_p(V)\notin\sphere_p^\tau , \,  \norm{V}_p \geq t}.
	\end{align*}
	
	The regions of the sphere $\sphere_p$  labeled $+1$ and $-1$ by $g \in \cG$ are denoted by
	\[
	\sphere_p^{\sigma}(g) = \{ x \in \sphere_p : g(x) = \sigma 1\},
	\qquad \text{for $\sigma \in \{-,+\}$.}
	\]
	We work hereafter under the following smoothness assumption. Let $\partial A$ denote the boundary of set $A$.
	
	\begin{assumption}[Smoothness]
		\label{assum:smoothness}
		The scalar $\tau \in (0, 1)$ is such that $\Phi_p(\partial \sphere_p^\tau) = 0$ and the class $\cG$ is such that     $\Phi_p(\partial\sphere_p^+(g))= \Phi_p(\partial\sphere_p^-(g)) = 0$ for all $g\in\cG$.
	\end{assumption}
	
	\begin{lemma}
		\label{lem:limLt-v-angular classif}
		If the conditional regular variation property~\eqref{eq:conditional_RV} and Assumption~\ref{assum:smoothness} hold, then for any angular classifier $g\in\cG$,
		\begin{align*}
			\lim_{t \to \infty} L_t^{>\tau}(g) 
			&= 
			L_\infty^{>\tau}(g) 
			:= \varrho \Phi_p^+(\sphere_p^-(g) \cap \sphere_p^\tau) + (1-\varrho) \Phi_p^-(\sphere_p^+(g) \cap \sphere_p^\tau), \\
			\lim_{t\to\infty} L_t^{\le\tau}(g) 
			&=
			L_\infty^{\le\tau}(g) :=
			\varrho \Phi_p^+(\sphere_p^-(g) \setminus \sphere_p^\tau) + (1-\varrho) \Phi_p^-(\sphere_p^+(g) \setminus \sphere_p^\tau),
		\end{align*}
		and thus
		\[
		\lim_{t\to\infty} L_t(g) 
		= L_\infty(g) :=  \varrho \Phi_p^+(\sphere_p^-(g)) + (1-\varrho) \Phi_p^-(\sphere_p^+(g)). 
		\]
	\end{lemma}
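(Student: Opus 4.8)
The plan is to reduce each of the three limits to a single application of the conditional regular variation assumption~\eqref{eq:conditional_RV}, after rewriting the relevant events in terms of the cones $\cone_A$ from~\eqref{eq:angularMeasure}. Throughout, $g$ is angular and Assumptions~\ref{assum:smoothness} and~\eqref{eq:conditional_RV} are in force.

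First I would rewrite the risk. Since $V$ has unit--Pareto margins it takes values in $[1,\infty]^d$, so $V\neq 0$ almost surely and $\theta(V)$ is well defined; because $g$ is angular, $g(V)=g(\theta(V))$. Writing $\sphere_g^{+}$ and $\sphere_g^{-}$ for the Borel regions of $\sphere$ on which $g$ equals $+1$ and $-1$, the misclassification event splits as the disjoint union $\{g(V)\neq Y\}=\{\theta(V)\in\sphere_g^{+},\,Y=-1\}\cup\{\theta(V)\in\sphere_g^{-},\,Y=1\}$. Using $\theta(t^{-1}x)=\theta(x)$ one has $\{\theta(V)\in A,\ \norm{V}\ge t\}=\{t^{-1}V\in\cone_A\}$ for every Borel $A\subseteq\sphere$, so intersecting with $\{\theta(V)\in\sphere_\tau,\,\norm{V}\ge t\}$ and conditioning on $Y$ gives
\[
	L_t^{>\tau}(g)= p\,t\,\PP{t^{-1}V\in\cone_{\sphere_g^{-}\cap\sphere_\tau}\mid Y=1}+(1-p)\,t\,\PP{t^{-1}V\in\cone_{\sphere_g^{+}\cap\sphere_\tau}\mid Y=-1},
\]
together with the analogous identity for $L_t^{\le\tau}(g)$ with $\sphere\setminus\sphere_\tau$ in place of $\sphere_\tau$.

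Next I would verify that~\eqref{eq:conditional_RV} applies to each of the four terms, that is, that the cones are bounded away from the origin (immediate, since $\norm{x}\ge1$ on $\cone_A$) and that $\mu_\sigma(\partial\cone_A)=0$ for $A\in\{\sphere_g^\sigma\cap\sphere_\tau,\ \sphere_g^\sigma\setminus\sphere_\tau\}$ and $\sigma\in\{-,+\}$. Here I would use the inclusion $\partial\cone_A\subseteq\{x\in E:\norm{x}=1\}\cup\{x\in E:\norm{x}\ge1,\ \theta(x)\in\partial_{\sphere}A\}$, with $\partial_{\sphere}A$ the boundary of $A$ relative to $\sphere$, together with the polar decomposition~\eqref{eq:polar} applied to $\mu_\sigma$ (whose angular measure is $\Phi^\sigma$): the factor $\diff r/r^2$ assigns the set $\{\norm{x}=1\}$ zero $\mu_\sigma$-mass, while the second set receives mass $\Phi^\sigma(\partial_{\sphere}A)$. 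Since a set and its complement in $\sphere$ share the same relative boundary, $\partial_{\sphere}A\subseteq\partial_{\sphere}\sphere_g^\sigma\cup\partial_{\sphere}\sphere_\tau$ in both cases; and since $\Phi=p\Phi^{+}+(1-p)\Phi^{-}$ with $p\in(0,1)$, Assumption~\ref{assum:smoothness} forces $\Phi^\sigma(\partial_{\sphere}\sphere_g^\sigma)=\Phi^\sigma(\partial_{\sphere}\sphere_\tau)=0$, whence $\mu_\sigma(\partial\cone_A)=0$.

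With this in hand, applying~\eqref{eq:conditional_RV} term by term and using $\mu_\sigma(\cone_A)=\Phi^\sigma(A)$ gives $L_t^{>\tau}(g)\to p\,\Phi^{+}(\sphere_g^{-}\cap\sphere_\tau)+(1-p)\,\Phi^{-}(\sphere_g^{+}\cap\sphere_\tau)=L_\infty^{>\tau}(g)$ and likewise $L_t^{\le\tau}(g)\to L_\infty^{\le\tau}(g)$; summing via~\eqref{eq:Ltgtau} and using finite additivity of $\Phi^{+}$ and $\Phi^{-}$ along the disjoint decomposition $\sphere_g^\sigma=(\sphere_g^\sigma\cap\sphere_\tau)\sqcup(\sphere_g^\sigma\setminus\sphere_\tau)$ then yields $L_t(g)\to p\,\Phi^{+}(\sphere_g^{-})+(1-p)\,\Phi^{-}(\sphere_g^{+})=L_\infty(g)$. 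I expect the only genuinely delicate point to be the null-boundary check $\mu_\sigma(\partial\cone_A)=0$; everything else is bookkeeping. The subtlety there is to read ``$\partial$'' in Assumption~\ref{assum:smoothness} as the boundary relative to $\sphere$, which is precisely the object that the polar decomposition~\eqref{eq:polar} feeds on.
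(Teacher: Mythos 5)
Your proposal is correct and follows essentially the same route as the paper's proof: split into type-I and type-II errors, rewrite each as a cone probability conditionally on $Y$, and invoke the conditional regular variation~\eqref{eq:conditional_RV} using Assumption~\ref{assum:smoothness} plus domination of $\Phi^\pm$ by $\Phi$ to verify the $\mu_\sigma$-continuity of the cones. The only difference is cosmetic: you spell out the null-boundary check via the polar decomposition~\eqref{eq:polar} and the inclusion $\partial\cone_A\subseteq\{\norm{x}=1\}\cup\{\theta(x)\in\partial_{\sphere}A\}$, whereas the paper simply asserts that $\sphere_g^-\cap\sphere_\tau$ is a $\Phi^+$-continuity set (a small sign-bookkeeping slip on your end — you need $\Phi^{+}(\partial_{\sphere}\sphere_g^{-})=0$ and $\Phi^{-}(\partial_{\sphere}\sphere_g^{+})=0$, not the matched-sign versions — but domination by $\Phi$ gives all four anyway).
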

	
	The proof is deferred to  Appendix~\ref{sec:appendix_classif}. The idea of the decomposition~\eqref{eq:Ltgtau} is to discard points with angle outside $\sphere_p^\tau$. If $\Phi_p$ is concentrated on the interior of $\sphere_p$, the corresponding loss term $L_t^{\le\tau}(g)$ can be expected to be small for $\tau$ close zero, since
	\[
	\sup_{g \in \cG} L_t^{\le\tau}(g)
	\le t \PP{\theta_p(V) \not\in \sphere_p^\tau, \, \norm{V}_p > t}
	\to \Phi_p(\sphere_p \setminus \sphere_p^\tau), \qquad \text{as $t \to \infty$}.
	\]
	\smallskip
	
\noindent
\textbf{ERM classifier and decomposition of the excess risk.} \quad
	Given $0 < \tau < 1$ and integers $1 < k\le  n$, define the  empirical risk of a classifier $g \in \cG$ by
	\begin{equation}
		\label{eq:emp-risk-extreme}
		\widehat{L}^\tau(g) 
		= \frac{1}{k} \sum_{i=1}^n \un\left\{ 
		g(\widehat{V}_i) \neq Y_i, 
		\, \theta_p(\widehat{V}_i)\in\sphere_p^\tau , 
		\, \norm{\widehat{V}_i}_p \geq n/k 
		\right\}.
	\end{equation}
	Assuming a minimizer exists, the ERM classifier is defined as
	\[
	\hat{g}_k^\tau \in \argmin_{g \in \cG} \widehat{L}^\tau(g).
	\]
	Otherwise, introduce a tolerance parameter and consider an approximate minimizer instead, \chg{i.e., an argument where the value of the objective function is close to the infimum.}
	
	Recall $L_\infty$ in Lemma~\ref{lem:limLt-v-angular classif}. 
	A consequence of Theorem~1 in \cite{jalalzai2018binary} is that if \eqref{eq:assumJalal_regressionFunction} holds, the Bayes classifier $g_\infty^*$ in \eqref{eq:Bayesclassifiers} minimizes $L_\infty$ over all measurable classifiers.
	One way to measure the performance of the ERM classifier $\hat{g}_k^\tau$ is via the asymptotic excess risk
	\[
	L_\infty(\hat{g}_k^\tau) - \inf_{g \in \cG} L_\infty(g).
	\]
	The latter can be bounded in terms of the supremum deviation of the empirical and asymptotic risks over $\cG$: since $\widehat{L}^\tau(\hat{g}_k^\tau)$ is equal to the infimum of $\widehat{L}^\tau(g)$ over $g \in \cG$, we have
	\begin{equation}
		\label{eq:excessRisk-decompose}
		L_\infty(\hat{g}_k^\tau) - \inf_{g \in \cG} L_\infty(g)
		\le 2 \sup_{g \in \cG} \left|\widehat{L}^\tau(g) - L_\infty(g)\right|.
	\end{equation}
	Our main purpose is therefore to obtain a concentration inequality for the supremum on the right-hand side of this inequality.
	
	In our context, the supremum deviation itself decomposes further, since for all $g\in\cG$, we have $L_\infty^{\le\tau}(g) \le \Phi_p(\sphere_p \setminus \sphere_p^\tau)$ and thus
	\begin{equation}
		\label{eq:decomposeRisk-tau}
		\sup_{g \in \cG}
		\left| \widehat L^\tau(g) - L_\infty(g) \right|
		\le \sup_{g \in \cG} \left| \widehat L^\tau(g) - L_\infty^{>\tau}(g)\right| + \Phi_p(\sphere_p \setminus \sphere_p^\tau) .
	\end{equation}
	The term $ \Phi_p(\sphere_p \setminus \sphere_p^\tau)$ may be viewed as an additional bias term which vanishes as $\tau\to 0$ provided $\Phi_p$ is concentrated on the interior of $\sphere_p$. On the other hand, the upper bounds in Theorem~\ref{thm:conc} and~Theorem~\ref{theo:deviationsClassif} grow roughly as $1/\sqrt{\tau}$ as $\tau\to 0$. The choice of $\tau$ thus constitutes an additional bias-variance compromise.
	
	Lemma~\ref{lem:empirical-risk-angularMeasure} in Appendix~\ref{sec:appendix_classif} parallels Lemma~\ref{lem:limLt-v-angular classif} by relating the empirical risk $\widehat{L}^\tau(g)$ to the empirical angular measures of the positive and negative instances, 
	\begin{equation}
		\label{eq:empiricalSignedAM}
		\widehat\Phi_p^\sigma(A) = \frac{1}{k_\sigma} \sum_{i=1}^n 
		\un\left\{Y_i = \sigma 1\right\} \cdot 
		\un\left\{\theta_p(\widehat{V}_i) \in A, \; \norm{\widehat{V}_i}_p \ge n / k\right\}, 
		\qquad A\subseteq\sphere_p, \sigma\in\{-,+\}, 
	\end{equation}
	where $k_\sigma = kn_\sigma/n$ and $n_\sigma = \sum_{i=1}^n \un\{Y_i = \sigma 1\}$ is the number of points such that $Y_i = \sigma 1$. 
	
	In view of the error decomposition~\eqref{eq:excessRisk-decompose}--\eqref{eq:decomposeRisk-tau}, we state our main result in terms of the maximum deviation $\sup_{g \in \cG}|\widehat{L}^\tau(g) - L_\infty^{>\tau}(g)|$, following the techniques from Section~\ref{sec:traditionalEmpirical}.
	
	\begin{theorem}[Deviations of the empirical tail risk]
		\label{theo:deviationsClassif}
		Let $\cG$ be a class of angular classifiers.
		Consider the collection $\mathcal{A} = \{\sphere_p^+(g) \cap\sphere_p^\tau : g \in\cG \} \cup  \{\sphere_p^-(g) \cap\sphere_p^\tau : g \in\cG \}$.
		If Assumptions~\ref{ass:A} and~\ref{ass:VF} relative to the class $\cA$ and the unconditional angular measure $\Phi_p$ are satisfied
		and if the conditional regular variation assumption~\eqref{eq:conditional_RV} and Assumption~\ref{assum:smoothness} hold, then, with probability at least $1 - \delta$,
		\begin{equation*}
			\sup_{g \in \cG}|\widehat{L}^\tau(g) - L_\infty^{>\tau}(g)| \leq 
			2(\operatorname{error} + \operatorname{bias\,II} + \operatorname{gap}) 
		\end{equation*}
		where $\operatorname{error}$ and $\operatorname{gap}$ are nearly the same as in Theorem~\ref{thm:conc} ($\delta$ has been halved in the $\operatorname{error}$ term), that is,
		\begin{align*}
			\operatorname{error}
			&=  \sqrt{\frac{d^{1+1/p}(1+\Delta_2)}{k}} \left( \cst \sqrt{V_\F} + 2 \sqrt{\log(2(d+1)/\delta)} \right) + \frac{2}{3k} \log(2(d+1)/\delta), \\
			\operatorname{gap}
			&= 2d\Delta_2 + 3c\big(\log(d/3c) - \log(\Delta_1) + 1 \big) \Delta_1,
		\end{align*}
		with $\Delta_1,\Delta_2$ and $V_{\cF}$ as defined in Theorem~\ref{thm:conc},  and 
		\begin{multline*}
			\operatorname{bias\, II}
			=
			\sup \Bigl\{ \left|\tfrac{n}{k} \PP{V \in \tfrac{n}{k} B, Y = \sigma 1 } - \PP{Y =\sigma 1}\mu_\sigma(B)\right| : \\
			\text{$B = \Gamma_{A}^+$ or $B = \Gamma_{A}^-$ for some $A \in \cA$ and $\sigma\in\{-,+\}$}
			\Bigr\}.
		\end{multline*}
	\end{theorem}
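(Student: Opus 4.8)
The plan is to mimic the proof of Theorem~\ref{thm:conc}, but applied separately to the positive and negative instances, and then to stitch the two pieces together through the identity relating the empirical risk to the signed empirical angular measures. First I would invoke Lemma~\ref{lem:empirical-risk-angularMeasure} (stated in the Supplement) to write, for every angular classifier $g \in \cG$,
\[
	\widehat{L}^\tau(g)
	= \tfrac{k_+}{k}\,\widehat{\Phi}_+(\sphere_g^- \cap \sphere_\tau)
	+ \tfrac{k_-}{k}\,\widehat{\Phi}_-(\sphere_g^+ \cap \sphere_\tau),
\]
with $k_\sigma = k n_\sigma / n$, and to recall from Lemma~\ref{lem:limLt-v-angular classif} that $L_\infty^{>\tau}(g) = p\,\Phi^+(\sphere_g^-\cap\sphere_\tau) + (1-p)\,\Phi^-(\sphere_g^+\cap\sphere_\tau)$. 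Subtracting and using the triangle inequality, $\sup_{g\in\cG}|\widehat{L}^\tau(g) - L_\infty^{>\tau}(g)|$ is bounded by a sum of two terms, each of the form $\sup_{A\in\cA}|\tfrac{k_\sigma}{k}\widehat{\Phi}_\sigma(A) - \PP{Y=\sigma 1}\Phi^\sigma(A)|$ with $\cA$ the class of label regions intersected with $\sphere_\tau$; note that $\tfrac{k_\sigma}{k}\widehat{\Phi}_\sigma(A)$ is exactly $\tfrac1k\sum_i \un\{Y_i=\sigma1,\ \theta(\widehat V_i)\in A,\ \|\widehat V_i\|\ge n/k\}$, a rank-based tail empirical quantity of precisely the type handled in Section~\ref{sec:traditionalEmpirical}.

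Next I would run the framing argument of Section~\ref{sec:framing} on each of these two quantities. Since the estimators depend on the data only through ranks, one may again assume unit-Pareto margins, so $V = X$ and $\widehat V_i = \hat v(V_i)$. For fixed $\sigma$, the set $\{ i : Y_i = \sigma1,\ \widehat V_i \in (n/k)\cone_A \}$ is captured by the same framing sets $\Gamma_A^\sigma(r_\pm, 3\Delta)$ as in Theorem~\ref{thm:conc}, because the event $\{\widehat\Gamma_A \subseteq \tfrac{n}{k}\Gamma_A^+, \ \tfrac nk \Gamma_A^- \subseteq \widehat\Gamma_A\}$ from \eqref{eq:nested_emp} does not involve the labels at all; the labels only restrict which indices are counted. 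On that same high-probability event, the bias/stochastic-error/framing-gap decomposition~\eqref{eq:dec3} carries over verbatim with $P_n$ replaced by $\tfrac1k\sum_i \un\{Y_i=\sigma1\}\delta_{V_i}$ and $P(\cdot)$ replaced by $\PP{V\in\cdot,\,Y=\sigma1}$ and $\mu$ replaced by $\PP{Y=\sigma1}\mu_\sigma$. The stochastic-error term is then controlled by the concentration inequality for tail empirical processes (Theorem~\ref{thm:VC}) applied to the class $\{(n/k)\Gamma_A^\sigma : \sigma\in\{-,+\},\ A\in\cA\}$; crucially this is the \emph{same} class whose VC-dimension $V_{\cF}$ appears in Theorem~\ref{thm:conc}, because the $\Gamma_A^\sigma$ do not depend on labels, so the same $\operatorname{error}$ and $\operatorname{gap}$ expressions reappear. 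The surviving bias term becomes exactly $\operatorname{bias\,II}$. Summing the $\sigma=+$ and $\sigma=-$ bounds and noting that $\tfrac{k_+}{k}+\tfrac{k_-}{k} = 1$ absorbs the split, and finally multiplying by the factor $2$ from~\eqref{eq:excessRisk-decompose} (the overall statement bounds $\sup_{g}|\widehat L^\tau - L_\infty^{>\tau}|$, but the factor $2$ in the displayed bound tracks the two-sidedness of the framing), gives the claimed inequality $\sup_{g\in\cG}|\widehat L^\tau(g) - L_\infty^{>\tau}(g)| \le 2(\operatorname{error}+\operatorname{bias\,II}+\operatorname{gap})$.

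The probability bookkeeping needs slight care: the event on which framing~\eqref{eq:nested_emp} holds is the same $1-\delta$-type event as in Theorem~\ref{thm:conc}, but we now apply Theorem~\ref{thm:VC} once for $\sigma=+$ and once for $\sigma=-$ (or once to the combined class), and additionally must control the deviations of the random counts $n_\sigma/n$ from $p$ and $1-p$. Each of these contributes a failure probability of order $\delta/(d+1)$, and a union bound over the handful of events yields the stated confidence $1 - \delta(d+2)/(d+1)$; I would track the constants through the $(d+1)$ in the definition of $\Delta$ so that the final count comes out to exactly $d+2$ over $d+1$. The main obstacle I anticipate is purely technical rather than conceptual: verifying that the framing inclusions hold \emph{uniformly and simultaneously} for the positive and negative subsamples on a single event whose probability does not degrade by more than the claimed amount, and checking that replacing $p$ by $n_+/n$ in the identity $\widehat L^\tau(g) = \tfrac{k_+}{k}\widehat\Phi_+(\cdot) + \tfrac{k_-}{k}\widehat\Phi_-(\cdot)$ introduces only lower-order error absorbed into $\operatorname{error}$ or handled by the extra $\delta/(d+1)$. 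Everything else is a faithful transcription of the proof of Theorem~\ref{thm:conc}, which is why the error and gap terms are quoted as ``the same.''
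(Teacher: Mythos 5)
Your approach matches the paper's: decompose via Lemma~\ref{lem:limLt-v-angular classif} and Lemma~\ref{lem:empirical-risk-angularMeasure} into type-I and type-II terms, observe that the framing inclusions~\eqref{eq:nested_emp} are label-free so the event $\event_1$ of Theorem~\ref{thm:conc} works unchanged, and then apply Theorem~\ref{thm:VC} once per sign on the label-restricted counts (or equivalently on the product space $\rset^d\times\{-1,1\}$ as the paper does). Two small bookkeeping points should be tightened. First, the factor $2$ in the conclusion comes simply from the triangle inequality splitting $|\widehat L^\tau - L_\infty^{>\tau}|$ into the $\sigma=+$ and $\sigma=-$ contributions, each of which is bounded by the full $\operatorname{error}+\operatorname{bias\,II}+\operatorname{gap}$; it has nothing to do with the two-sidedness of the framing (which is already absorbed in the $\max_{B\in\{\Gamma_A^+,\Gamma_A^-\}}$). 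Second, there is no need to control the deviations of $n_\sigma/n$ from $\PP{Y=\sigma 1}$: the quantity $\tfrac{k_\sigma}{k}\,\widehat\Phi_\sigma(A)$ collapses to $\tfrac1k\sum_{i}\un\{Y_i=\sigma1,\ \theta(\hV_i)\in A,\ \|\hV_i\|\ge n/k\}$, which is compared \emph{directly} to $\PP{Y=\sigma1}\Phi_\sigma(A)$ via Theorem~\ref{thm:VC} on the product space, no intermediate estimate of $p$ entering anywhere. The probability budget is then just three events, $\event_1$ of probability at least $1-d\delta/(d+1)$ and the two stochastic-error events $\event_2^\pm$ each of probability at least $1-\delta/(d+1)$, giving the advertised $1-(d+2)\delta/(d+1)$.
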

	
	The proof relies on the relationships between the (empirical) classification risks and the (empirical) angular measure pointed out in Lemmata~\ref{lem:limLt-v-angular classif} and~\ref{lem:empirical-risk-angularMeasure}, which imply that
	\begin{align*}
		|\widehat{L}^\tau(g) - L_\infty^{>\tau}(g)| 
		& \le 
		\Bigl| \frac{n_+}{n} \widehat \Phi_p^+(\sphere_p^-(g) \cap \sphere_p^\tau) - \varrho \Phi_p^+(\sphere_p^-(g) \cap \sphere_p^\tau) \Bigr|  \\
		&\quad \mbox{} +
		\Bigl| \frac{n_-}{n} \widehat \Phi_p^-(\sphere_p^+(g) \cap \sphere_p^\tau) - (1-\varrho) \Phi_p^-(\sphere_p^+(g) \cap \sphere_p^\tau) \Bigr|    
	\end{align*}
	for $g \in\cG$.
	The right-hand side of the latter display is then bounded uniformly in $g \in \cG$ by adapting the proof of Theorem~\ref{thm:conc}; see Appendix~\ref{sec:appendix_classif} in the Supplement for  details.
	
	\section{Simulation experiments} 
	\label{sec:experiments}
	
	Our experiments aim at illustrating the influence of the threshold $\tau$ introduced in Equation~\eqref{eq:boundedfrom0} on the supremum error $\sup_{A\in\mathcal{A}_p} |\hPhi_p(A) - \Phi_p(A)|$. 
	For a simple class of sets $\cA_p$, we will demonstrate empirically that the supremum error increases as $\tau$ decreases. This finding suggests that this additional parameter is not a mere artifact from our proof.
	
	We report the results of Monte Carlo experiments on simulated data. The setting is such that $\Phi_p(A)$ can be approximated with arbitrary precision by Monte Carlo sampling, the standardization $\Vf$ to unit-Pareto margins can be computed analytically, and the bias term in the upper bounds of Theorems~\ref{thm:conc} 
	is zero.
	The estimation error then only stems from the stochastic error and framing gap in \eqref{eq:dec3} leading to the terms on the second and third lines in \eqref{eq:conc1}. 
	
	The experiments were implemented in Python~3 using the packages \textsf{numpy}, \textsf{scipy}, \textsf{matplotlib}, and \textsf{scikit-learn}.
	The computer code to reproduce the experiments is publicly available online.
	\footnote{\url{https://github.com/Hamid-Jalalzai/}} 
	\smallskip
	
\noindent
\textbf{Experimental setting.} \quad
	We consider angular measures with respect to the $L_p$-norm for 
	$p \in \{1,2, \infty\}$ and we limit ourselves to dimensions $d \in \{2,\ldots,5\}$, higher dimensions requiring much more computational effort to evaluate the supremum error on the class of sets described below.
	
	The different classes $\cA_p$ for different values of $p$ are all obtained by projection of a single class $\mathcal{A}$ defined on the $L_\infty$-sphere. Namely, let $\theta_p(x) = \|x\|_p^{-1} x$ for $x\in\rset^d\setminus\{0\}$. Recall $\sphere_p^\tau = \sphere_p \cap (\tau,\infty)^d$. Fix $0 < \tau < 1$. For a class $\cA$ of sets on $\sphere_\infty^\tau$, we define $\cA_p = \{\theta_p(A)\cap \sphere_p^\tau: A \in \cA\}$ for $p \in [1, \infty]$.
	We choose the class $\cA$ on $\sphere_\infty^\tau$ as the finite collection of hyper-rectangles forming a regular grid on $\sphere_\infty^\tau$ 
	with side length $h = (1-\tau)/S$ with $S = 10$.
	Each set $A \in \cA$ is of the form $A = A_1 \times \cdots \times A_d$, where, for some $j_0 \in \{1,\ldots,d\}$ and some integer vector $(i_j)_{j \in \{1,\ldots,d\} \ne j_0 }\in \{0, 1, \ldots, S-1\}^{d-1}$, we have
	\[
	A_j =
	\begin{dcases}
		\{ 1 \} & \text{if $j = j_0$}, \\
		(\tau + i_j h, \tau + (i_j+1)h) & \text{if $j \ne j_0$}.
	\end{dcases}
	\]

	We consider an independent random sample $X_1,\ldots,X_n$ drawn from the distribution of a random vector $X = R\Theta$ where $R$ and $\Theta$ are independent, the random variable $R$ follows a unit-Pareto distribution on $[1, \infty)$ and $\Theta$ follows a symmetric Dirichlet distribution on the unit simplex $\sphere_1 = \{ x \in [0, 1]^d : x_1+\cdots+x_d = 1\}$ with parameter $(\nu, \ldots, \nu)$ for some concentration parameter $\nu>0$. The larger $\nu$, the stronger $\Theta$ is concentrated around the barycenter $(1/d,\ldots,1/d)$.
	\chg{As detailed in Lemma~\ref{lem:explicit-V-Phi} in the Supplement, the angular measure $\Phi_p$ for $p \in [1, \infty]$ is $\Phi_p(A) = d \PP{X \in \cone_A}$ for Borel sets $A \subseteq \sphere_p$. If $p = 1$, then this simplifies to $\Phi_1(A) = d \PP{\Theta \in A}$.}
	
	In this setting, all statistics involved in our analysis are easily computable. The following facts are an immediate consequence of Lemma~\ref{lem:explicit-V-Phi} in the Supplement.
	Let $A \subseteq \sphere_p^\tau$ be a Borel set with $\tau \in (0, 1)$ and recall $\sphere_p^\tau$ in \eqref{eq:boundedfrom0}.
	
	\begin{itemize}
		\item
		The true $\Phi_p(A)$ may be approached with arbitrary precision by the Monte Carlo estimator
		\begin{equation}
			\label{eq:Phi-MC}
			\Phi_{p,\text{MC}}(A)
			= \frac{d}{N} \sum_{i=1}^N \un\left\{ X_i' \in \cone_A \right\}
			= \frac{d}{N} \sum_{i=1}^N \un\left\{ \Theta_i'/\norm{\Theta_i'}_p \in A ,\, R_i' \norm{\Theta_i'}_p \ge 1 \right\},
		\end{equation}
		where $X_i' = (R_i', \Theta_i')$ for $i \in \{1,\ldots,N\}$ is another independent random sample from the distribution of $(R, \Theta)$. The Monte Carlo estimator is unbiased and has variance bounded by $d^2/(4N)$.
		\item We have
			$
			\Phi_p(A) = \frac{n}{k}\PP{ V \in \frac{n}{k}\cone_A}  
			$
		as soon as  $n/k > d/\tau$, so that the bias term in Theorem~\ref{thm:conc} 
		is null under the latter condition.
	\end{itemize}
	\smallskip
	
\noindent
\textbf{Results.} \quad
	We choose a sample size of  $n=10^4$ and we let $k= 100$ in dimension $d\in\{2,\ldots,5\}$. 
	The Monte Carlo parameter $N$ in \eqref{eq:Phi-MC} is set to  $10^7$. The Dirichlet concentration parameter is chosen as  $\nu = 1/10$ so that the angular measure is concentrated near the boundaries of the positive orthant.
	
	Figure~\ref{fig:error_tau} displays the supremum errors $\sup_{A\in\cA_p}|\hPhi_p(A) - \Phi_p(A)|$  averaged over $500$  independent replications as a function of the parameter $\tau$.
	The latter varies in the range $[dk/n,0.1]$ in line with Lemma~\ref{lem:explicit-V-Phi}, for the reason explained above.
	The average errors of all three estimators  $\hPhi_p$ decrease for larger values of $\tau$ in line with our theoretical findings. Note that the errors also decrease when the dimension increases. This is a direct consequence of the definition of the class $\A$, forming a rectangular grid on the sphere $\sphere_\infty^\tau$ consisting of $d \times 10^{d-1}$ subsets (each face requires $10^{d-1}$ rectangles to be covered) so that the grid becomes finer as the dimension increases and the $\Phi_p$-mass of the sets $A$ considered in the supremum error is decreasing. 
	
	\begin{figure}
		\begin{subfigure}[b]{0.32\textwidth}
			\centering 
			\includegraphics[width=\textwidth]{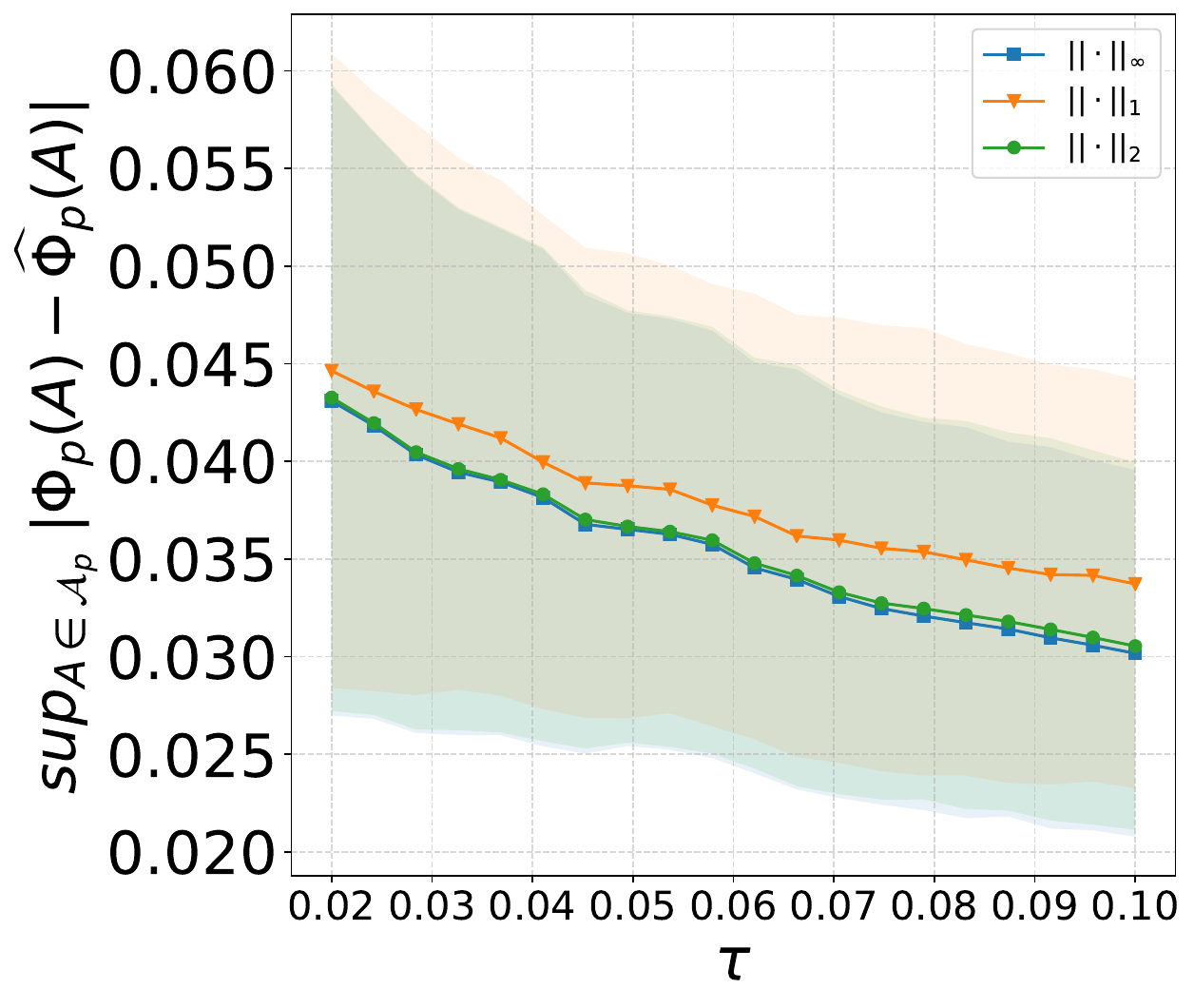}
			\caption{$d=2$}
		\end{subfigure}~
		\begin{subfigure}[b]{0.32\textwidth}
			\centering 
			\includegraphics[width=\textwidth]{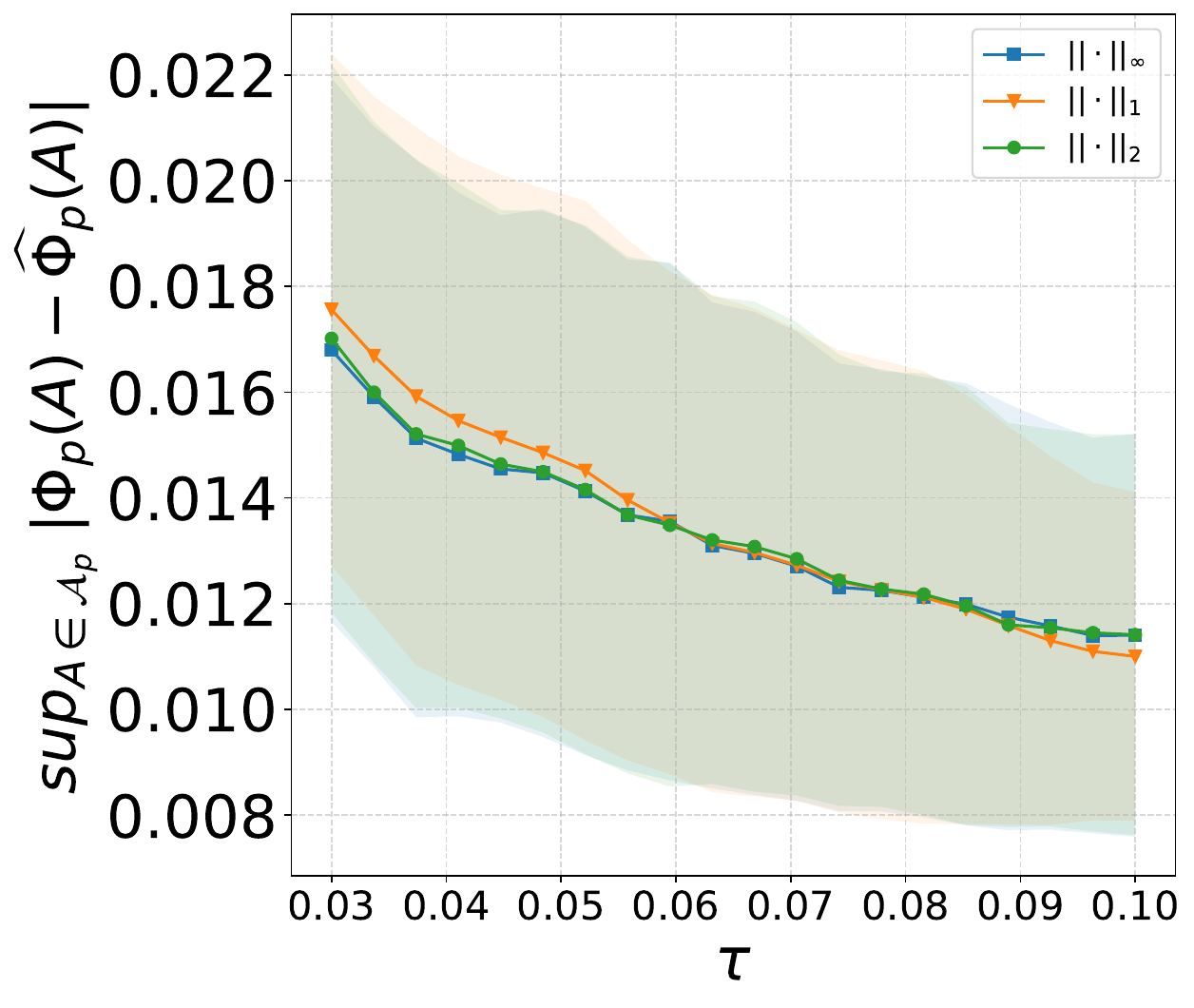}
			\caption{$d=3$}
		\end{subfigure}
		\\
		\begin{subfigure}[b]{0.32\textwidth}
			\centering 
			\includegraphics[width=\textwidth]{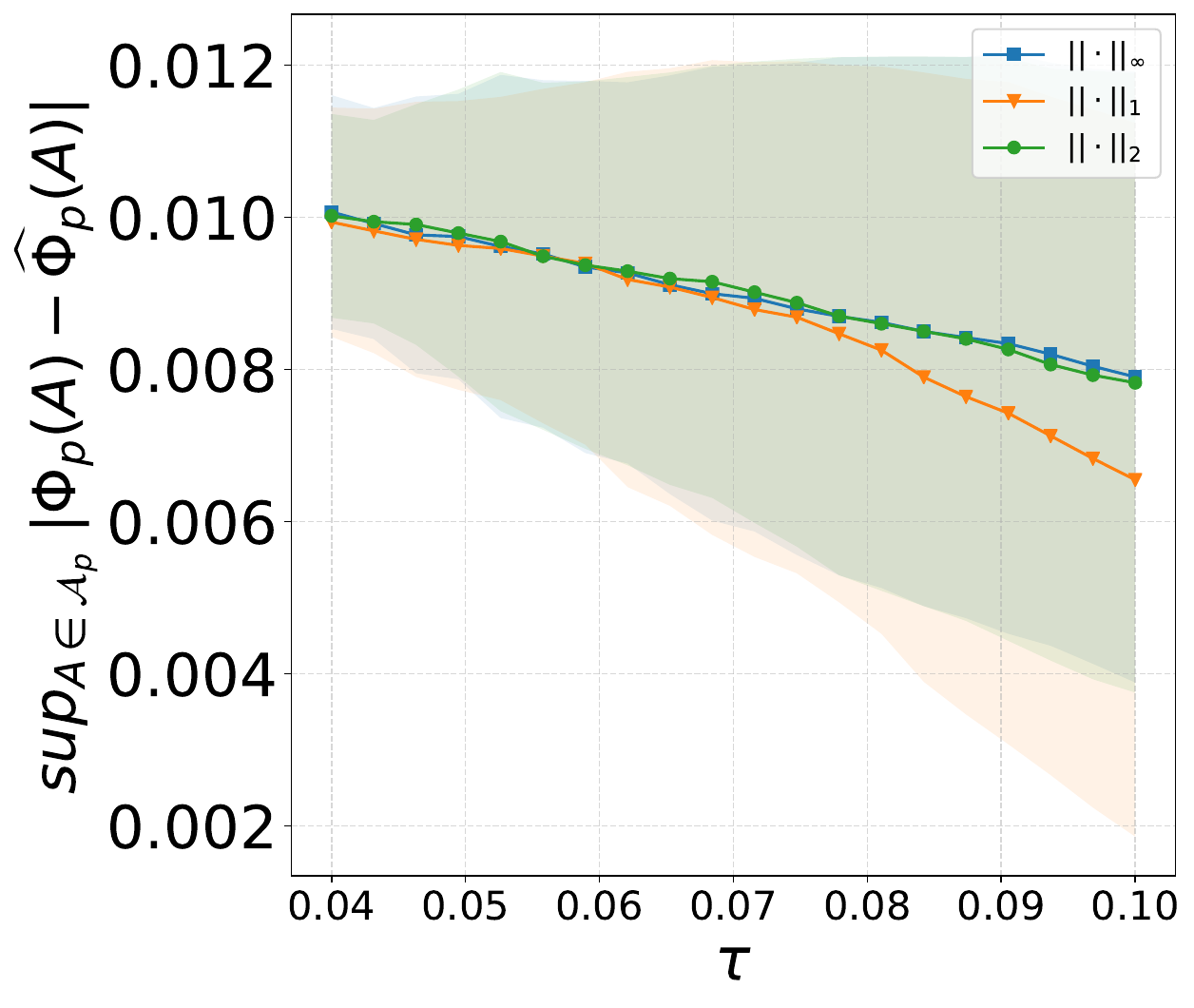}
			\caption{$d=4$}
		\end{subfigure}~
		\begin{subfigure}[b]{0.32\textwidth}
			\centering 
			\includegraphics[width=\textwidth]{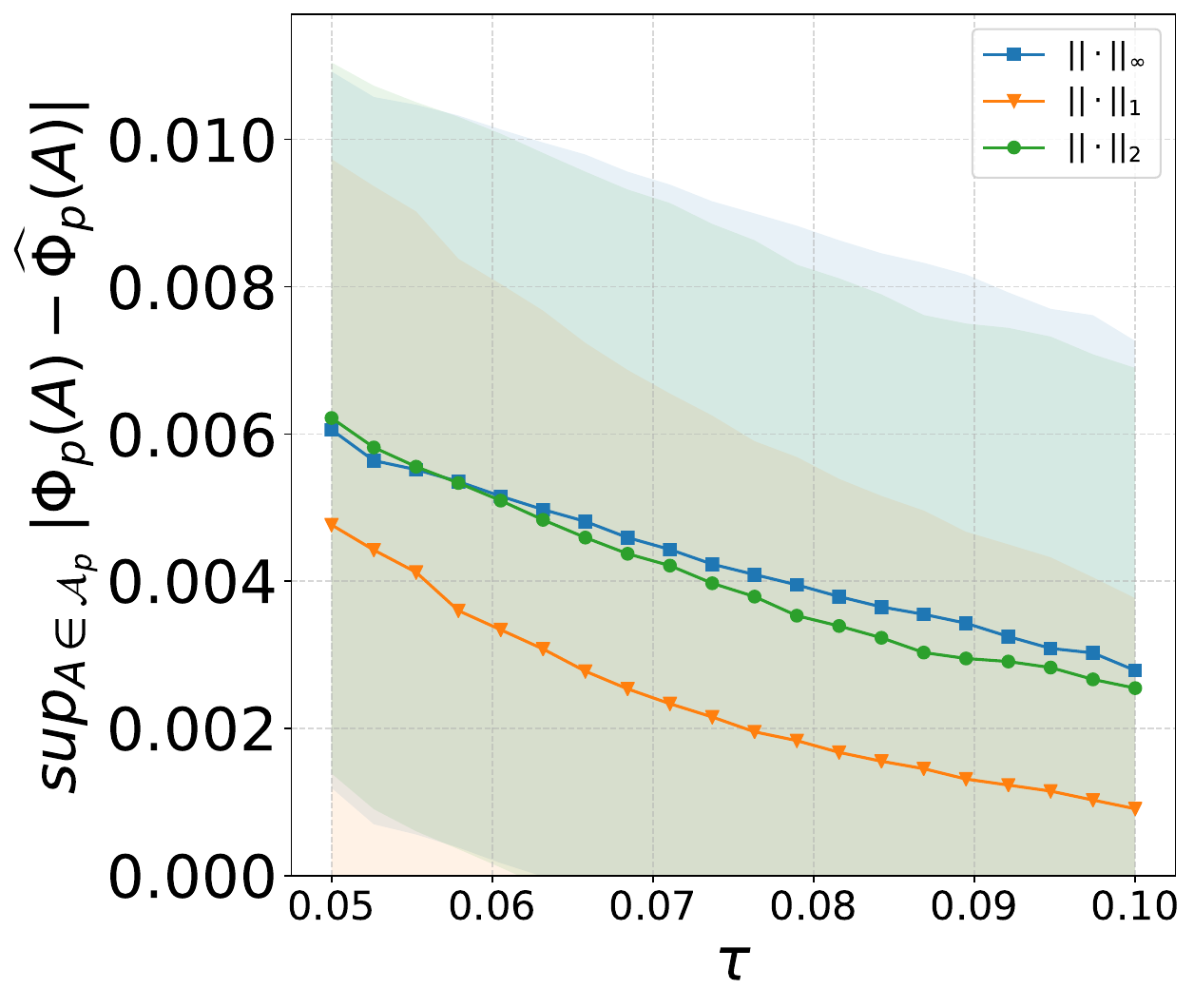}
			\caption{$d=5$}
		\end{subfigure}
		\caption{Average errors $\sup_{A\in\mathcal{A}}|\hPhi_p(A) - \Phi_p(A)|$ with $p \in \{1, 2, \infty\}$, in dimension $d=2$ (top left panel), $d=3$ (top right panel), dimension $d=4$ (bottom left panel) and $d=5$ (bottom right panel), as a function of $\tau$.  Colored intervals represent standard deviations of errors over $500$ independent replications.}
		\label{fig:error_tau}
	\end{figure}
	
	Of particular interest is the behaviour of the different norms. The error associated to the $L_2$-norm and the $L_\infty$-norm behave similarly while the one linked to the $L_1$-norm seems quite different. Moreover, the nature of the difference changes with the dimension. An explanation of these phenomena is depicted in Figure~\ref{fig:pizza&tau}. Since the estimation error of $\hPhi_p$ is measured with a supremum and increases while $\tau$ decreases, it suggests that this supremum is realized near the boundary of the considered subset $\sphere_p^\tau$ of the sphere, i.e., near the intersections with the coordinate axes. In this region, the $L_2$-sphere and the $L_\infty$-sphere are close to each other, the latter being even tangent to the former in dimension $d=2$. The $L_1$-sphere is quite different there since it forms a $45^\circ$ angle with the $L_\infty$-sphere. This explains why the error behaves so differently when the $L_1$-norm is considered. The fact that it becomes smaller in higher dimension can be understood as follows. Two forces play an opposite role in making the $L_1$-sphere different from the others: on the one hand, as illustrated in the left panel of Figure~\ref{fig:pizza&tau}, censuring coordinates smaller than $\tau$ has a higher impact on the $L_1$-sphere since it looses more volume than the others, on the other hand, as illustrated in the right panel, the cones $\mathcal{C}_A$ generated by the projection of a set $A \subseteq \sphere_\infty^\tau$ are larger for $p=1$ than for $p=2$ or $p=\infty$. The first force tends to reduce the $\Phi_1$-mass of the $L_1$-sphere while the second tends to increase it. Following the results of Figure~\ref{fig:error_tau}, the latter is dominated by the former when the dimension increases.
	
	\begin{figure}
		\centering
		\includegraphics[width=0.32\textwidth]{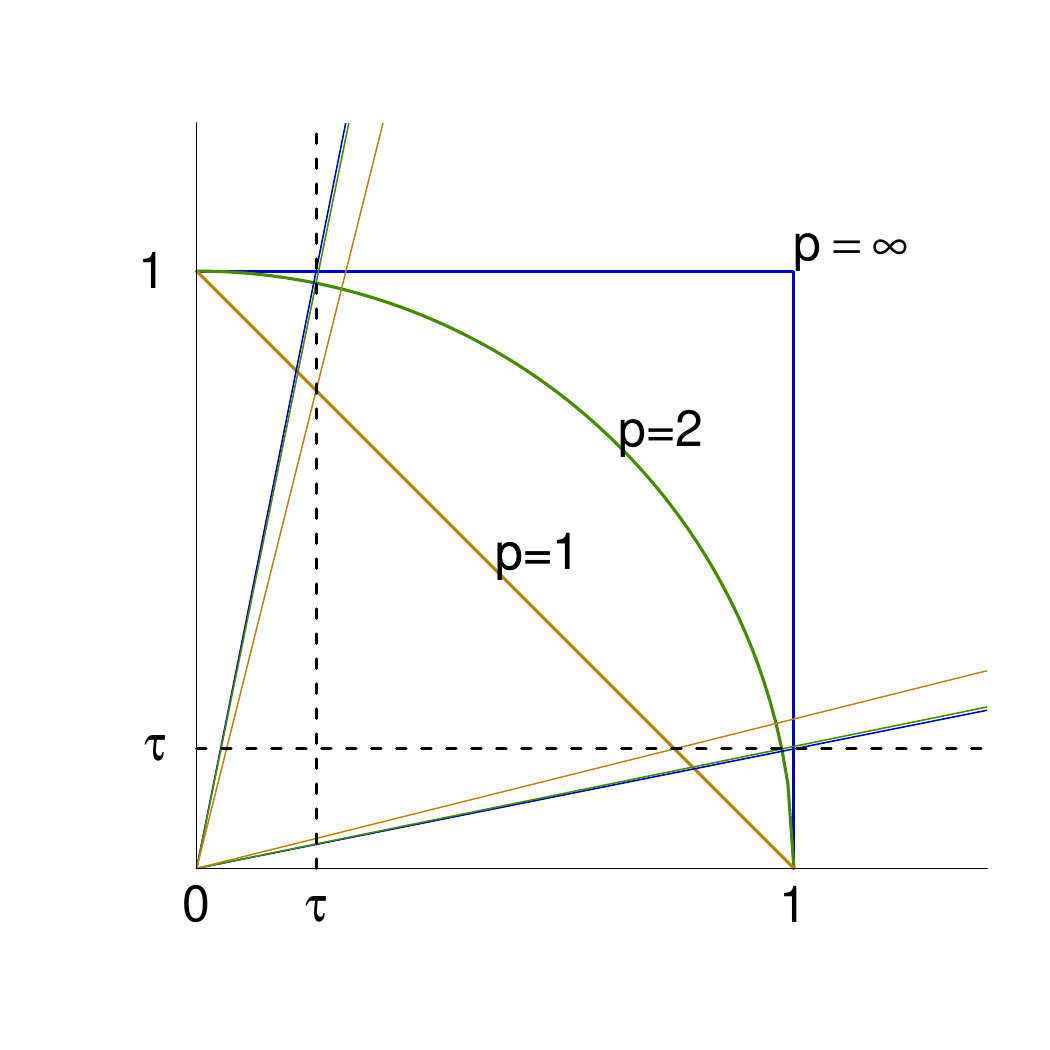}
		\includegraphics[width=0.32\textwidth]{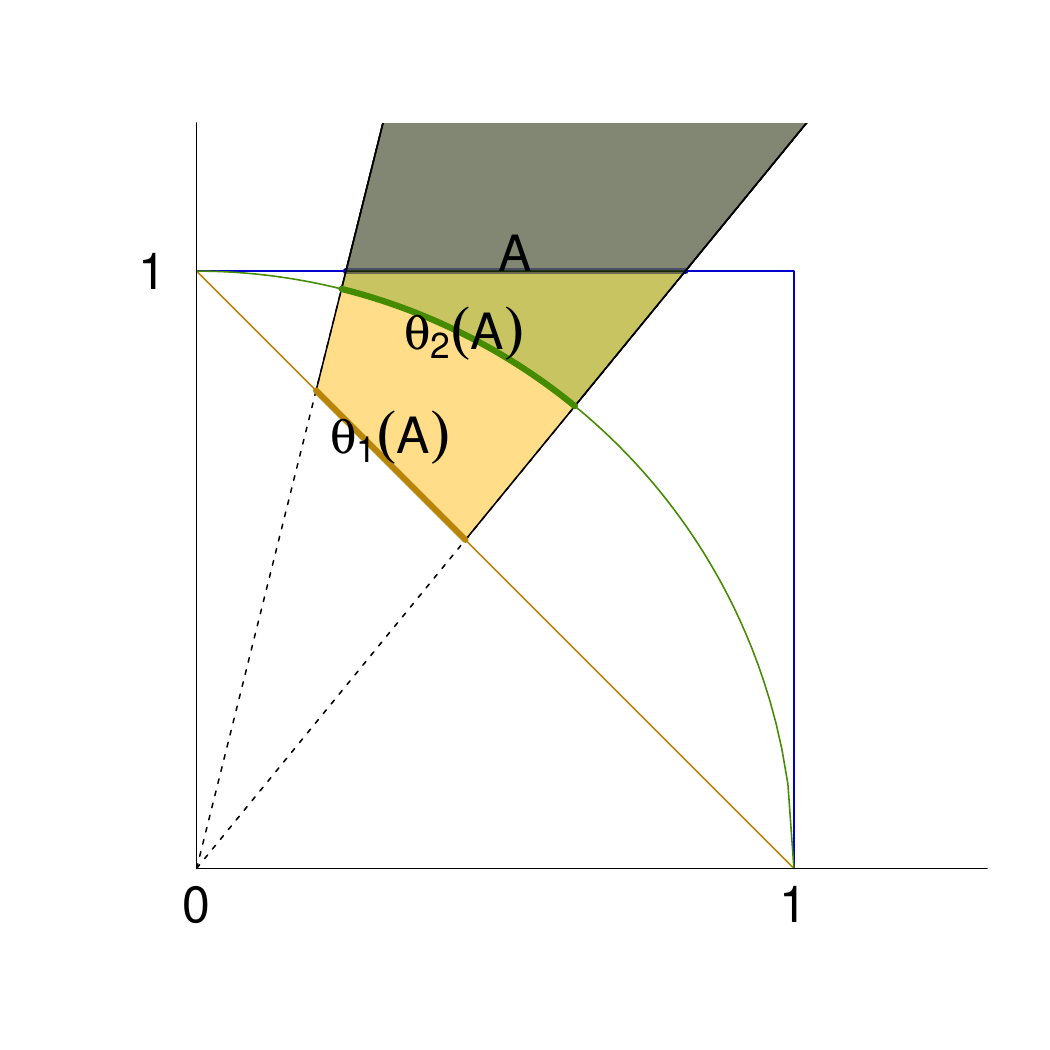}
		\caption{Restriction to coordinates larger than $\tau$ has an heavier impact on the $L_1$-sphere than on the $L_2$-sphere and the $L_\infty$-sphere, which touch each other near the coordinate axes (left panel). The cone $\{ x \in [0, \infty)^p : \norm{x}_p \ge 1, x \in \theta_p(A)\}$ generated by the projection $\theta_p(A)$ of a set $A \subseteq \sphere_\infty^\tau$ onto $\sphere_p^\tau$ is largest when the $L_1$-norm is considered (right panel).}
		\label{fig:pizza&tau}
	\end{figure}
	
	\section{Conclusion}
	\label{sec:discussion}
	
	We derived non-asymptotic guarantees in the form of concentration inequalities for the rank-based empirical angular measure with respect to any $L_p$-norm, $p \in [1,\infty]$.
	The bounds are valid in any dimension and concern the supremum error over certain collections of subsets of the $L_p$-sphere.
	Apart from a logarithmic term, the bounds match the convergence rate known in the bivariate case \cite{einmahl2001nonparametric,einmahl2009maximum}.
	Two applications to statistical learning based on observations in extreme regions were worked out: minimum-volume set estimation and binary classification.
	
	It would be interesting to be able to complement our upper bounds with lower bounds on the estimation error.
	Sharp bounds would provide guidance on the choice of the threshold parameter $k$, ideally in an adaptive manner as in \cite{boucheron2015tail}.
	
	The results are limited to subsets of the relative interior of the unit sphere to avoid non-extreme components, which are difficult to manage. Allowing for sets touching the boundary constitutes an important but challenging avenue left for further research. A numerical experiment has been provided to illustrate the influence of this restriction on the estimation error, for different norms and  dimensions. 
	
	The application to classification was limited to two balanced classes.
	Extensions to multiple classes and unbalanced situations can be developed in the same way.
	Of an altogether different nature, however, is the prediction of a continuous response from observations in extreme regions.
	The latter would require concentration inequalities for the empirical angular measure evaluated on collections of functions more general than set indicators. This topic has not yet even been broached in the bivariate case.


\begin{appendix}
	
	\section{Concentration inequality for rare events}
	\label{sec:concrare}
	
	The main concentration tool that we use in the proof of Theorem~\ref{thm:conc} is the following. Since the result may be of independent interest, we provide a detailed proof.
	
	\begin{theorem}
		\label{thm:chaining}
		Let $P_n$ denote the empirical distribution of an independent random sample $\xi_1, \ldots, \xi_n$ from a distribution $P$ on some measurable space $\mathcal{X}$. Let $\mathcal{A}$ be a VC-class of measurable subsets of $\mathcal{X}$ with VC-dimension $V_{\mathcal{A}}$. Let $B$ be a measurable subset of $\mathcal{X}$ containing $\bigcup_{A \in \mathcal{A}} A$ and write $\kappa = P(B)$. Then, for all $\delta \in (0, 1)$, there exists an event with probability at least $1 - \delta$ on which we have
		\[
		\sup_{A \in \mathcal{A}} \abs{P_n(A) - P(A)} \le \sqrt{\frac{\kappa}{n}} \left( \cst \sqrt{V_\cA} + 2 \sqrt{\log(1/\delta)} \right) + \frac{2}{3n} \log(1/\delta).
		\]
	\end{theorem}
	
	In~\cite[Theorem~1]{GoixCOLT}, a similar inequality is derived, but with a generic constant $C$ that is not made explicit. Just before their Lemma~14, there is a reference to \cite{Kolt06} providing bounds on the expectation of a symmetrized supremum, but from that source, the value of the constant looks nearly impossible to trace. We follow an alternative route to obtain that value via Theorems~1.16--17 in~\cite{lugosi:2002}, giving an explicit bound for the expectation of a symmetrized supremum in terms of an integral over covering numbers of the class of sets. In turn, the covering numbers of such a class can be bounded in terms of its VC-dimension. In this way, the constant $C$ can be made explicit. Its value is most likely not optimal but, should a sharp value for the constant be found in the future, it can be substituted in our result.
	
	We now give the proof of Theorem~\ref{thm:chaining}. It is based on a McDiarmid's concentration inequality for the supremum around its expected value in \cite{mcdiarmid1998concentration}, extending Bernstein's classical inequality and recalled in~\cite[Proposition~11]{GoixCOLT}. We rewrite it in a form which is more convenient for us~\cite[Proposition~5]{lhaut2021uniform}.
	
	\begin{proposition}
		\label{prop:mcDiarmid}
		Under the assumptions of Theorem~\ref{thm:chaining}, for all $\delta \in (0, 1)$, there exists an event with probability at least $1 - \delta$ on which we have
		\[
		\sup_{A \in \mathcal{A}} \abs{P_n(A) - P(A)} \le 2\sqrt{\frac{\kappa}{n}\log(1/\delta)} + \frac{2}{3n} \log(1/\delta) + \EE \left[ \sup_{A \in \mathcal{A}} \abs{P_n(A) - P(A)} \right]. 
		\]
	\end{proposition}
	
	\begin{proof}[Proof of Theorem~\ref{thm:chaining}]
		Apply Proposition~\ref{prop:mcDiarmid}.
		To bound the remaining expectation, we make use of Theorem~1.16 in~\cite{lugosi:2002}, which relies on a chaining argument to ensure that
		\begin{equation}
			\label{eq:chaining lugosi}
			\EE \left[ \sup_{A \in \mathcal{A}} \abs{P_n(A) - P(A)} \right] \leq \frac{24}{\sqrt{n}} \max_{x_1,\ldots,x_n \in \mathcal{X}} \int_0^1 \sqrt{\log\left(2\mathcal{N}\left(r,\cA(x_1^n)\right)\right)} \, \diff r,
		\end{equation}
		where $\mathcal{N}(r,\cA(x_1^n))$ is the \emph{covering number} (the smallest number of balls of radius $r$ needed to cover a set) of the set $\cA(x_1^n) := \left\{ (\1_A(x_i))_{i=1}^n : A \in \cA \right\} \subseteq \{0,1\}^n$ with respect to the metric $\rho(b, c) = n^{-1/2} \norm{b-c}_2$ for $b, c \in \{0, 1\}^n$, see~\cite[page~29]{lugosi:2002} for definitions and notation. Haussler~\citep{haussler1995} showed that, if $\cA$ has a finite VC-dimension, then the associated covering number is bounded in the following way: for any $0 < r \le 1$,
		\begin{equation}
			\label{eq:haussler}
			\mathcal{N}\left(r,\cA(x_1^n)\right) \le e(V_\cA +1)\left( \frac{2e}{r^2}\right)^{V_\cA}.
		\end{equation}
		Using~\eqref{eq:haussler} in~\eqref{eq:chaining lugosi} we can bound the integral as follows: for any points $x_1,\ldots,x_n \in \mathcal{X}$, we have
		\begin{align*}
			\int_0^1 \sqrt{\log\left(2\mathcal{N}\left(r,\cA(x_1^n)\right)\right)} \, \diff r
			&\le \int_0^1 \sqrt{\log\left(2e(V_\cA +1)\right) + V_\cA \log\left(2e/r^2\right)} \, \diff r \\
			&\le \sqrt{V_\cA} \int_0^1 \sqrt{3\log 2 + 2 - 2 \log r} \, \diff r 
			\chg{\le 2.44 \sqrt{V_{\cA}},}
		\end{align*}
		\chg{by numerical integration or by expressing the integral in terms of the standard normal cumulative distribution function.}
		Combining this result and~\eqref{eq:chaining lugosi}, we get
		\begin{equation}
			\label{eq:vcChaining}
			\chg{\EE \left[ \sup_{A \in \mathcal{A}} \abs{P_n(A) - P(A)} \right] \leq 59 \sqrt{\frac{V_\A}{n}}.}
		\end{equation}
	
		This bound is of the right order but does not use the extreme nature of the events in the class $\cA$. To this end, we use the \emph{conditioning trick} in~\cite[Lemma~2]{lhaut2021uniform}, which states that if $K = \sum_{i=1}^n \un \{X_i \in B\}$, \chg{with $B \supseteq \bigcup_{A \in \cA} A$ as in the statement of the theorem}, we have the distributional equality
		\[
		\left[(P_n(A))_{A \in \cA} \, \Big| \, K = k \right] \stackrel{d}{=} \left(\frac{k}{n} P_k^Y(A)\right)_{A \in \cA},
		\]
		where $P_k^Y$, for $k \in \{1,\ldots,n\}$, is the empirical measure associated to an independent random sample $Y_1,\ldots,Y_k$ from the conditional distribution $P(\, \cdot \mid B)$. It easily follows that~\cite[Lemma~6]{lhaut2021uniform}
		\[
		\EE \left[ \sup_{A \in \mathcal{A}} \abs{P_n(A) - P(A)} \right] 
		\le \EE \left[ \frac{K}{n} \EE \left[ \sup_{A \in \mathcal{A}} \left| P_K^Y(A) - P(A \mid B) \right| \mathrel{\Big|} K \right] \right] + \sqrt{\frac{\kappa}{n}}.
		\]
		The conditional expectation on the right-hand side is bounded by means of~\eqref{eq:vcChaining}, giving
		\begin{align*}
			\EE \left[ \frac{K}{n} \EE \left[ \sup_{A \in \mathcal{A}} \left| P_K^Y(A) - P(A \mid B) \right| \, \Big| \, K \right] \right]
			\le \EE \left[ \frac{K}{n} \cdot 59 \sqrt{\frac{V_{\mathcal{A}}}{K}} \right] 
			&\le \frac{59}{n} \sqrt{V_\cA} \sqrt{\EE[K]} 
			\le 59 \sqrt{\frac{\kappa}{n}V_\cA}
		\end{align*}
		in view of Jensen's inequality and $\EE[K] = n \kappa$.
		Combining everything and using the fact that $V_\A \geq 1$, we get the result. The proof is complete.
	\end{proof}
	\end{appendix}
	
	\begin{supplement}
		\stitle{\chg{Supplement to ``Concentration bounds for the empirical angular measure with statistical learning applications''}}
		\sdescription{\chg{The supplement contains some auxiliary lemmas and proofs of all results developed in the paper: main theorem, examples, binary classification application and simulations.}}
	\end{supplement}

\begin{acks}[Acknowledgments]
	\chg{
	The authors would like to thank two anonymous Referees for helpful comments and suggestions on an earlier version of the paper.  A significant part of the work done by Anne Sabourin on this project was accomplished while she was an associate professor at Télécom Paris.}
\end{acks}

	
	\bibliographystyle{imsart-number} 
	\bibliography{biblio}       
	

%

\newpage

\begin{center}
	\vspace{0.8cm}
	{\huge{\bf  Supplement}}
\end{center}
\vspace{0.5cm}

This supplement contains mathematical details related to in the paper \cite{clem2022}. Appendix~\ref{sec:app:auxiliary} contains auxiliary results. The proof of the main result in the paper, Theorem~\ref{thm:conc}, is given in Appendix~\ref{sec:proof:conc}. Appendix~\ref{sec:appendixBias} provides a practical criterion to control the bias term in Theorem~\ref{thm:conc} and applies it to the multivariate Cauchy distribution. In Appendix~\ref{sec:appendixExamples}, it is verified that the examples of collections of sets $\cA$ in Section~\ref{subsec:examples} in the paper indeed satisfy Assumptions~\ref{ass:A} and~\ref{ass:VF}. The proofs of the results in Section~\ref{sec:appliClassif} on classification in extreme regions are given in Appendix~\ref{sec:appendix_classif}, while Appendix~\ref{sec:app:simu} contains an additional result related to the simulation experiments in Section~\ref{sec:experiments} in the paper.

\begin{appendix}
	
	\setcounter{section}{1}
	\setcounter{equation}{44}
	
	\section{Auxiliary results}
	\label{sec:app:auxiliary}
	
	We will use the following minor results in the proof of Theorem~\ref{thm:conc}.
	
	\begin{lemma}
		\label{lem: radialframing}
		Let $x,v \in [1,\infty)^d$, $p \in [1,\infty]$ and $h \in [0,1/2)$. If
		\[
		\forall j \in \{1,\ldots,d\}, \qquad \left\vert \frac{x_j}{v_j} - 1 \right\vert \leq h,
		\]
		then
		\[
		\left\vert \frac{\norm{v}_p}{\norm{x}_p} - 1 \right\vert \leq \frac{h}{1-h} =: \Delta < 1,
		\]
		which implies 
		\[
		\forall r>0: \norm{x}_p \geq \frac{r}{1-\Delta} \implies \norm{v}_p \geq r \implies \norm{x}_p \geq \frac{r}{1+\Delta}.
		\]
	\end{lemma}
	
	\begin{proof}
		By hypothesis, for every $j \in \{1,\ldots,d\}$, we have
		\[
		1 - h \leq \frac{x_j}{v_j} \leq 1 + h,
		\]
		hence
		\[
		\frac{1}{1+h} \leq \frac{v_j}{x_j} \leq \frac{1}{1-h}
		\]
		and we deduce
		\[
		\left\vert \frac{v_j}{x_j} - 1 \right\vert \leq \frac{h}{1-h}. 
		\]
		In particular, 
		\[
		\forall j \in \{1,\ldots,d\}: \qquad |v_j - x_j| \leq \frac{h}{1-h} x_j.
		\]
		Taking the $p$-th power on each side, summing over $j$ and taking the $p$-th square root leads to
		\[
		\left\vert \|v\|_p - \|x\|_p \right\vert \leq \|v-x\|_p \leq \frac{h}{1-h} \|x\|_p.
		\]
		This shows the first part of the lemma. To obtain the second part, just note that the first part guarantees that
		\[
		\|x\|_p (1-\Delta) \leq \|v\|_p \leq \|x\|_p(1+\Delta),
		\]
		Those inequalities imply the second statement.
	\end{proof}
	
	\begin{lemma}
		\label{lem: vjtoPj}
		For every $j \in \{1,\ldots,d\}$ and $x_j > 0$, we have
		\[
		\left\vert \frac{x_j}{\hat{v}_j(x_j)} - 1 \right\vert \leq \left\vert x_j P_{n,j}((x_j,\infty)) - 1 \right\vert + \frac{|x_j-1|}{n}.
		\]
	\end{lemma}
	
	\begin{proof}
		By definition of the transform $\hat{v}$, we have for every $j \in \{1,\ldots,d\}$ and $x_j>0$:
		\[
		\hat{v}_j(x_j) = \frac{1}{1-\frac{n}{n+1}\hat{F}_j(x_j)} = \frac{n+1}{nP_{n,j}((x_j,\infty))+1}. 
		\]
		Therefore, by the triangle inequality,
		\begin{align*}
			\left\vert \frac{x_j}{\hat{v}_j(x_j)} - 1 \right\vert
			&= \left\vert \frac{x_j \left( nP_{n,j}((x_j,\infty))+1 \right)}{n+1} -1 \right\vert = \left\vert \frac{x_j \left( nP_{n,j}((x_j,\infty))+1 \right) - (n+1)}{n+1} \right\vert \\
			&= \left\vert \frac{n \left( x_j P_{n,j}((x_j,\infty)) - 1 \right) + (x_j - 1)}{n+1} \right\vert \\
			&\leq \left\vert x_j P_{n,j}((x_j,\infty)) - 1 \right\vert + \frac{|x_j-1|}{n}.
		\end{align*}
	\end{proof}

	\begin{lemma}
		\label{lem:theta}
		Let $\norm{\,\cdot\,}$ be a norm on a real vector space and write $\theta(z) = z/\norm{z}$ for non-zero vector $z$. For non-zero vectors $x$ and $y$, we have
		\[
		\norm{\theta(x) - \theta(y)}
		\le 2 \frac{\norm{x-y}}{\norm{x} \vee \norm{y}}.
		\]
	\end{lemma}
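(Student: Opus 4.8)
The plan is to reduce to the case $\norm{x} \le \norm{y}$ by symmetry, so that $\norm{x} \vee \norm{y} = \norm{y}$, and then to split the difference $\theta(x) - \theta(y)$ into a ``radial'' piece and a ``translation'' piece. Concretely, I would write
\[
	\theta(x) - \theta(y)
	= \frac{x}{\norm{x}} - \frac{y}{\norm{y}}
	= x \left( \frac{1}{\norm{x}} - \frac{1}{\norm{y}} \right) + \frac{x - y}{\norm{y}},
\]
and apply the triangle inequality and absolute homogeneity of the norm to get
\[
	\norm{\theta(x) - \theta(y)}
	\le \norm{x} \left\lvert \frac{1}{\norm{x}} - \frac{1}{\norm{y}} \right\rvert + \frac{\norm{x - y}}{\norm{y}}.
\]

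The key step is to bound the first term. Since $\norm{x} \le \norm{y}$, we have
\[
	\norm{x} \left\lvert \frac{1}{\norm{x}} - \frac{1}{\norm{y}} \right\rvert
	= \left\lvert 1 - \frac{\norm{x}}{\norm{y}} \right\rvert
	= \frac{\norm{y} - \norm{x}}{\norm{y}}
	= \frac{\bigl\lvert \norm{y} - \norm{x} \bigr\rvert}{\norm{y}}
	\le \frac{\norm{x - y}}{\norm{y}},
\]
where the last inequality is the reverse triangle inequality. Combining the two displays yields $\norm{\theta(x) - \theta(y)} \le 2 \norm{x-y} / \norm{y} = 2 \norm{x-y}/(\norm{x} \vee \norm{y})$, as claimed. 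The case $\norm{y} \le \norm{x}$ is identical after swapping the roles of $x$ and $y$, and the bound is symmetric in $x$ and $y$, so no separate argument is needed.

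There is no real obstacle here: the only point requiring a moment's care is recognizing that the ``radial'' term is controlled by the reverse triangle inequality rather than by homogeneity alone, and that taking $\norm{x} \vee \norm{y}$ (rather than, say, $\norm{x} \wedge \norm{y}$) in the denominator is exactly what makes the normalization of the first term work out to at most $1$ times $\norm{x-y}/\norm{y}$.
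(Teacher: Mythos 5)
Your proof is correct and is essentially the paper's argument in disguise: the paper first normalizes so that the larger norm equals $1$ and then splits $\theta(x) - \theta(y)$ through an intermediate point, bounding the radial piece with the reverse triangle inequality, which is exactly what your decomposition $\theta(x) - \theta(y) = x\bigl(\tfrac{1}{\norm{x}} - \tfrac{1}{\norm{y}}\bigr) + \tfrac{x-y}{\norm{y}}$ does without the normalization step. The two splits coincide once you rescale so that $\norm{x} \vee \norm{y} = 1$, so this is the same proof with different bookkeeping.
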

	
	\begin{proof}
		Since $\theta(\,\cdot\,)$ is scale-invariant, we can divide both $x$ and $y$ by $\norm{x} \vee \norm{y}$ without changing the two sides of the inequality. For the sake of the proof we can thus assume that $\norm{x} = 1 \ge \norm{y} > 0$, in which case we need to show that
		\[
		\norm{x - \theta(y)} \le 2 \norm{x - y}.
		\]
		By the triangle inequality, we have
		\[
		\norm{x - \theta(y)}
		\le \norm{x - y} + \norm{y - \theta(y)}
		\]
		and
		\[
		\norm{y - \theta(y)} = \left|1 - \dfrac{1}{\norm{y}}\right| \cdot \norm{y}
		= |\norm{y} - 1| = |\norm{y} - \norm{x}| \le \norm{y-x}.
		\qedhere
		\]
	\end{proof}
	
	\section{Proof of Theorem~\ref{thm:conc}}
	\label{supp:proofs:main}
	
	\begin{proof}[Proof of Theorem~\ref{thm:conc}]
		\label{sec:proof:conc}
		
		The empirical exponent and angular measures $\hmu$ and $\hPhi_p$ only depend on the sample $X_1, \ldots, X_n$ through the transformed data
		\[
		\hF_j(X_{ij})
		= \frac{1}{n} \sum_{t=1}^n \mathds{1}\{X_{tj} \le X_{ij}\}
		\] 
		for $i = 1, \ldots, n$ and $j = 1, \ldots, d$. The sum of indicators is equal to the rank of $X_{ij}$ within $X_{1j}, \ldots, X_{nj}$. As the marginal cumulative distribution function $F_j$ is continuous, the ranks of $X_{1j}, \ldots, X_{nj}$ are with probability one equal to those of $V_{1j}, \ldots, V_{nj}$, where $V_{ij} = 1 / (1 - F_j(X_{ij}))$.
		Hence, even though the margins $F_1, \ldots, F_d$ are unknown, the fact that $\hmu$ and $\hPhi_p$ are rank statistics implies that for the sake of the proof, we may and will henceforth assume that the margins $F_1, \ldots, F_d$ are unit-Pareto.
		
		Abbreviate $\Gamma_{A}^\pm = \Gamma_{A}^\pm(r_\pm, 3\Delta_1)$ with $r_\pm = 1 \pm \Delta_2$ and $0 < \Delta_1, \Delta_2 < 1$ are as in the statement of the theorem. Since $r_- \le 1 \le r_+$, we have
		\[
		\forall A \in \cA, \qquad 
		\Gamma_{A}^- \subseteq \cone_A \subseteq \Gamma_{A}^+.
		\]
		We shall construct an event $\event_1$ with probability at least $1 -  d\delta/(d+1)$, on which we have
		\begin{equation}
			\label{eq:inclusions}
			\forall A \in \cA, \qquad 
			\tfrac{n}{k} \Gamma_{A}^- \subseteq \widehat{\Gamma}_A \subseteq \tfrac{n}{k} \Gamma_{A}^+.
		\end{equation}
		Then, on the event $\event_1$, the decomposition \eqref{eq:dec3} of the estimation error holds true.
		We treat the three terms involved in the decomposition in turn. At the end, we construct the event $\event_1$ with the required properties.

		\textbf{Bias term.} \quad
		Taking the supremum over $A \in \cA$ immediately yields the first term on the right-hand side of the bound~\eqref{eq:conc1}.

		\textbf{Stochastic error.} \quad
		We apply Theorem~\ref{thm:chaining} to the collection 
		\begin{equation}
			\label{eq:cF}
			\cF = 
			\left\{ 
			\tfrac{n}{k} \Gamma_{A}^{\sigma} :\; \sigma\in\{-,+\},\;  A \in \cA 
			\right\},
		\end{equation}
		which has finite VC dimension $V_\F$ in view of Assumption~\ref{ass:VF} and the paragraph following Theorem~\ref{thm:conc}; the rescaling by the factor $n/k$ obviously does not change the VC dimension.
		For every $A \in \cA$, we have
		\[
		\tfrac{n}{k} \Gamma_{A}^- \subseteq \tfrac{n}{k} \Gamma_{A}^+
		\subseteq \left\{ x \in [0, \infty)^d : \ \norm{x}_p \ge \tfrac{n}{k} \tfrac{1}{1+\Delta_2} \right\}.
		\]
		By the equivalence of norms~\eqref{eq: equiv norms}, since the margins of $P$ are unit-Pareto, it follows that the probability $\kappa$ appearing in Theorem~\ref{thm:chaining} applied to $\cF$ is bounded by
		\begin{equation}
			\label{eq:Fprobmax}
			P\left[\textstyle{\bigcup_{j=1}^d} \left\{
			x \in [0, \infty)^d : x_j \ge \tfrac{n}{k} \tfrac{1}{d^{1/p}(1+\Delta_2)} \right\}\right]
			\le d^{1+1/p}(1+\Delta_2)\tfrac{k}{n}.
		\end{equation}
		As a consequence, on an event $\event_2$ with probability at least $1 - \delta/(d+1)$, we have,
		\begin{multline*}
			\lefteqn{\sup_{A \in \A, \sigma \in \{-,+\}} \tfrac{n}{k} \left\vert P_n \left( \tfrac{n}{k}\Gamma_A^\sigma \right) - P \left( \tfrac{n}{k}\Gamma_A^\sigma \right) \right\vert} \\
			\leq \sqrt{\frac{d^{1+1/p}(1+\Delta_2)}{k}} \left( \cst \sqrt{V_\F} + 2 \sqrt{\log((d+1)/\delta)} \right) + \frac{2}{3k} \log((d+1)/\delta).
		\end{multline*}
		This is the second term on the right-hand side of the bound~\eqref{eq:conc1}.
		Since $\event_1$ and $\event_2$ have probabilities at least $1 - d\delta/(d+1)$ and $1 - \delta/(d+1)$, respectively, the event $\event_1 \cap \event_2$ has probability at least $1 - \delta$.

		\textbf{Framing gap.} \quad
		For $A \in \cA$, any point $x \in \Gamma_{A}^+ \setminus \Gamma_{A}^-$ has either a norm $\norm{x}_p$ between $r_-$ and $r_+$ or an angle $\theta_p(x)$ contained in a set of the form $A_+(\eps) \setminus A_-(\eps)$ for some $\eps > 0$. Specifically,
		\begin{equation}
			\label{eq:framingGapDecomposition}
			\begin{split}
				\mu(\Gamma_{A}^+ \setminus \Gamma_{A}^-) 
				&\le \mu\left(\left\{ x \in [0, \infty)^d : \ (1+\Delta_2)^{-1} \le \norm{x}_p \le (1-\Delta_2)^{-1} \right\}\right) \\
				&\quad + \mu\left(\left\{ x \in [0, \infty)^d : \ \norm{x}_p \ge 1, \; \theta_p(x) \in A_+(3\Delta_1 \norm{x}_p) \setminus A_-(3\Delta_1 \norm{x}_p) \right\}\right).
			\end{split}
		\end{equation}
		The first term on the right-hand side in \eqref{eq:framingGapDecomposition} is equal to
		\begin{equation*}
			\left( (1+\Delta_2) - (1-\Delta_2) \right) \mu(\{x \in [0, \infty)^d : \norm{x}_p \ge 1 \}) = 2 \Delta_2 \Phi_p(\sphere_p).
		\end{equation*}
		The second term on the right-hand side in \eqref{eq:framingGapDecomposition} can be computed via the product representation \eqref{eq:polar} of $\mu$ in polar coordinates: in view of \eqref{eq:Phi+-} in Assumption~\ref{ass:A}, the result is
		\begin{align*}
			\int_{1}^\infty  \Phi_p \bigl(
			A_+(3\Delta_1 r) \setminus A_-(3\Delta_1 r)
			\bigr) \frac{\diff r}{r^2} 
			&\le \int_{1}^\infty \min\left\{\Phi_p(\sphere_p), 3c \Delta_1 r\right\}  \frac{\diff r}{r^2} \\
			&= 3 c \Delta_1 \left(1 + \log \Phi_p(\sphere_p) - \log(3c \Delta_1)\right),
		\end{align*}
		since $\int_1^\infty \min(b, ar) \frac{\diff r}{r^2} = a (\log(b/a)+1)$ for $a \in (0, b]$ and $3c \Delta_1 \le 1 \le \Phi_p(\sphere_p)$ by assumption.
		
		The resulting upper bound in \eqref{eq:framingGapDecomposition} does not depend on $A \in \cA$.
		Bounding $\Phi_p(\sphere_p)$ by $d$, we obtain the third term on the right-hand side of the bound~\eqref{eq:conc1}.
		
		\textbf{Construction of the event $\event_1$.} \quad
		We still need to construct an event $\event_1$ with probability at least $1 - d \delta/(d+1)$ on which the inclusions \eqref{eq:inclusions} hold. To do this, we apply Theorem~\ref{thm:chaining} to each of the collections
		\[
		\cF_j = \left\{
		\{ x \in [0, \infty)^d : x_j > \tfrac{n}{k} y \} : \
		y \in [\rho, \infty)
		\right\},
		\qquad j = 1, \ldots, d.
		\]
		Fix $j = 1, \ldots, d$ and let $P_{n,j} = n^{-1} \sum_{i=1}^n \delta_{X_{ij}}$. Each set in the collection $\cF_j$ is a subset of $\{ x \in [0, \infty)^d : x_j > \tfrac{n}{k} \rho \}$, whose $P$-probability is $\kappa = \tfrac{k}{n} \rho^{-1}$. The class $\cF_j$ has VC dimension~$1$. By Theorem~\ref{thm:chaining}, there exists an event $\event_{1,j}$ with probability at least $1 - \delta/(d+1)$ on which
		\begin{align}
			\sup_{x_j \ge \frac{n}{k}\rho} \left|P_{n,j}((x_j, \infty)) - x_j^{-1}\right| 
			&\le \frac{k}{n} \left\{ \sqrt{\frac{1}{k\rho}} \left(56 + 2 \sqrt{\log((d+1)/\delta)} \right) + \frac{2}{3k} \log((d+1)/\delta) \right\} \nonumber \\
			&= \frac{k}{n} \Delta_1. \label{eq: kndelta1}
		\end{align}
		Since
		\[
		\hat{v}_j(x_j)
		= \frac{1}{1 - \frac{n}{n+1} P_{n,j}((-\infty, x_j])}
		= \frac{n+1}{n P_{n,j}((x_j, \infty)) + 1},
		\]
		we have on the event $\event_{1,j}$ the bounds
		\begin{equation}
			\label{eq:xnkrho:bounds}
			\forall x_j \ge \tfrac{n}{k}\rho, \qquad
			\frac{n+1}{n x_j^{-1} + k\Delta_1 + 1}
			\le
			\hat{v}_j(x_j)
			\le
			\frac{n+1}{(n x_j^{-1} - k\Delta_1)_+ + 1}.
		\end{equation}
		Moreover, since $\hat{v}_j$ is monotone, we have on $\event_{1,j}$ the inequalities
		\begin{equation}
			\label{eq:xnkrho}
			\forall x_j \le \tfrac{n}{k} \rho, \qquad
			\hat{v}_j(x_j) \le \hat{v}_j(\tfrac{n}{k} \rho) \le \frac{n+1}{k(\rho^{-1} - \Delta_1) + 1}
			\le \frac{n}{k} \frac{\rho}{1 - \rho \Delta_1}
			\le \frac{n}{k} \tau.
		\end{equation}
		
		Let $\event_1 = \bigcap_{j=1}^d \event_{1,j}$, the probability of which is at least $1 - d \delta/(d+1)$, as required. We need to show that on $\event_1$, the inclusions \eqref{eq:inclusions} hold. To do so, we proceed in steps. Throughout, we work on $\event_1$.
		\smallskip
		
		\emph{Step 1: Restriction to $(\tfrac{n}{k}\rho, \infty)^d$.} --- 
		If $x \in [0, \infty)^d$ is such that $\norm{\hat{v}(x)}_p \ge \tfrac{n}{k}$ but there exists $j = 1, \ldots, d$ with $x_j \le \tfrac{n}{k} \rho$, then, by \eqref{eq:xnkrho}, we have on $\event_1$ the bound
		\[ 
		\theta_{p,j}(\hat{v}(x)) = \frac{\hat{v}_j(x_j)}{\norm{\hat{v}(x)}_p} \le \tau
		\]
		and thus, by Assumption~\ref{ass:A}, necessarily $\theta_p(\hat{v}(x)) \not\in A$ for all $A \in \cA$. Hence, on $\event_1$, we have
		\begin{equation}
			\label{eq:GammaArho}
			\widehat{\Gamma}_A
			=
			\left\{
			x \in (\tfrac{n}{k}\rho, \infty)^d : \
			\norm{\hat{v}(x)}_p \ge \tfrac{n}{k}, \;
			\theta_p(\hat{v}(x)) \in A
			\right\}.
		\end{equation}
		\smallskip
		
		\emph{Step 2: Radial framing.} ---
		We consider the following decomposition of $\widehat{\Gamma}_A$:
		\[
		\left( \widehat{\Gamma}_A \cap \{x \in (\tfrac{n}{k}\rho, \infty)^d: \norm{x}_\infty \leq 2\tfrac{n}{k}\} \right) \cup \left( \widehat{\Gamma}_A \cap \{x \in (\tfrac{n}{k}\rho, \infty)^d: \norm{x}_\infty > 2\tfrac{n}{k}\} \right),
		\] 
		and we frame each set separately. 
		
		For points $x$ with $\norm{x}_\infty \leq 2\tfrac{n}{k}$, we seek to apply Lemma~\ref{lem: radialframing}. To this end, we first construct $h \in [0,1/2)$ such that, on $\event_1$, we have
		\[
		\forall j \in \{1,\ldots,d\}: \qquad \left\vert \frac{x_j}{\hat{v}_j(x_j)} - 1 \right\vert \leq h,
		\]
		for every $x_j > \tfrac{n}{k}\rho$ (which is larger than~$1$, by assumption that $\rho > k/n$). We simply apply Lemma~\ref{lem: vjtoPj} combined with~\eqref{eq: kndelta1}, giving
		\[
		\left\vert \frac{x_j}{\hat{v}_j(x_j)} - 1 \right\vert \leq \tfrac{k}{n} \Delta_1 x_j + \tfrac{|x_j-1|}{n} \leq \tfrac{k}{n} x_j \big(\Delta_1 + \tfrac{1}{k}\big) \leq 2 \big(\Delta_1 + \tfrac{1}{k}\big) = h,
		\]
		since $x_j \leq 2 \tfrac{n}{k}$. Hence, we may apply Lemma~\ref{lem: radialframing} with $r=n/k$ and $\Delta=\Delta_2$ since
		\[
		\frac{h}{1-h} \leq 4(\Delta_1+1/k) = \Delta_2.
		\]
		This leads to the inclusions
		\begin{align*}
			&\widehat{\Gamma}_A \cap \{x \in (\tfrac{n}{k}\rho, \infty)^d: \norm{x}_\infty \leq 2\tfrac{n}{k}\} \\
			&\qquad \subseteq \left\{x \in (\tfrac{n}{k}\rho, \infty)^d: \norm{x}_p \geq \tfrac{n}{k} \tfrac{1}{1+\Delta_2} \right\} \cap \{x \in (\tfrac{n}{k}\rho, \infty)^d: \norm{x}_\infty \leq 2\tfrac{n}{k}\} \qquad \text{and} \\
			&\widehat{\Gamma}_A \cap \{x \in (\tfrac{n}{k}\rho, \infty)^d: \norm{x}_\infty \leq 2\tfrac{n}{k}\} \\
			&\qquad \supseteq \left\{x \in (\tfrac{n}{k}\rho, \infty)^d: \norm{x}_p \geq \tfrac{n}{k} \tfrac{1}{1-\Delta_2} \right\} \cap \{x \in (\tfrac{n}{k}\rho, \infty)^d: \norm{x}_\infty \leq 2\tfrac{n}{k}\}.
		\end{align*}
		
		For points $x$ with $\norm{x}_\infty > 2\tfrac{n}{k}$, similar inclusions hold trivially. Indeed, on this set, by definition of the $L_\infty$-norm, there exists $j \in \{1,\ldots,d\}$ such that $x_j > 2 \tfrac{n}{k}$. Therefore, by our assumption that $k$ is large enough such that $\Delta_1 + 1/k \le 1/4$, we have by~\eqref{eq:xnkrho:bounds},
		\[
		\hat{v}_j(x_j) \geq \frac{n+1}{\tfrac{k}{2} + k \Delta_1 + 1} \geq \frac{n}{k} \frac{1}{\tfrac{1}{2} + (\Delta_1 + \tfrac{1}{k})} \geq \frac{n}{k}.
		\]
		Hence, we have $\norm{\hat{v}(x)}_\infty \geq n/k$, and, by equivalence of norms~\eqref{eq: equiv norms}, also $\norm{\hat{v}(x)}_p \geq n/k$ for every $p \in [1,\infty]$. Furthermore, $\norm{x}_p \geq \norm{x}_\infty \geq 2 \tfrac{n}{k} \geq \tfrac{n}{k} \tfrac{1}{1+\Delta_2}$ for every $0 < \Delta_2 < 1$. These inequalities imply the inclusions
		\begin{align*}
			&\widehat{\Gamma}_A \cap \{x \in (\tfrac{n}{k}\rho, \infty)^d: \norm{x}_\infty > 2\tfrac{n}{k}\} \\
			&\qquad \subseteq \left\{x \in (\tfrac{n}{k}\rho, \infty)^d: \norm{x}_p \geq \tfrac{n}{k} \tfrac{1}{1+\Delta_2} \right\} \cap \{x \in (\tfrac{n}{k}\rho, \infty)^d: \norm{x}_\infty > 2\tfrac{n}{k}\} \qquad \text{and}\\
			&\widehat{\Gamma}_A \cap \{x \in (\tfrac{n}{k}\rho, \infty)^d: \norm{x}_\infty > 2\tfrac{n}{k}\} \\
			&\qquad \supseteq \left\{x \in (\tfrac{n}{k}\rho, \infty)^d: \norm{x}_p \geq \tfrac{n}{k} \tfrac{1}{1-\Delta_2} \right\} \cap \{x \in (\tfrac{n}{k}\rho, \infty)^d: \norm{x}_\infty > 2\tfrac{n}{k}\}.
		\end{align*}
		
		Combining the two cases, we get radial framing, i.e.,
		\begin{equation}
			\label{eq:inclusions:radial}
			\begin{split}
				\widehat{\Gamma}_A &\subseteq \left\{x \in (\tfrac{n}{k}\rho, \infty)^d: \norm{x}_p \geq \tfrac{n}{k} \tfrac{1}{1+\Delta_2} \right\} \\
				\widehat{\Gamma}_A &\subseteq \left\{x \in (\tfrac{n}{k}\rho, \infty)^d: \norm{x}_p \geq \tfrac{n}{k} \tfrac{1}{1-\Delta_2} \right\}.
			\end{split}
		\end{equation}
		\smallskip
		
		\emph{Step 3: Angular framing.} ---
		We will show that, on $\event_1$, for any $x \in (\tfrac{n}{k}\rho, \infty)^d$ and $A \in \cA$, 
		\begin{equation}
			\label{eq:Apmimplic}
			\theta_p(x) \in A_-(3\tfrac{k}{n}\Delta_1\norm{x}_p)
			\implies
			\theta_p(\hat{v}(x)) \in A
			\implies
			\theta_p(x) \in A_+(3\tfrac{k}{n}\Delta_1\norm{x}_p).
		\end{equation}
		In combination with \eqref{eq:inclusions:radial}, this will show the inclusions \eqref{eq:inclusions} and thus finish the proof.
		
		Fix such $x$ and $A$. Put $\eps = 3\tfrac{k}{n}\Delta_1 \norm{x}_p$. We consider two cases: $\eps < 1$ and $\eps \ge 1$.
		
		If $\eps \ge 1$, the two implications in \eqref{eq:Apmimplic} are trivially fulfilled: Since the $\norm{\,\cdot\,}_p$-diameter of $\sphere_p$ is equal to $1$, we have $A_-(\eps) = \varnothing$ (as $\sphere_p \setminus A$ is not-empty) while $A_+(\eps) = \sphere_p$.
		
		The interesting case is thus $\eps < 1$. By Lemma~\ref{lem:theta}, we have
		\[
		\norm{\theta_p(\hat{v}(x))-\theta_p(x)}_p
		\le 2 \frac{\norm{\hat{v}(x)-x}_p}{\norm{x}_p}.
		\]
		Since $\eps < 1$, we have $\norm{x}_\infty \leq \norm{x}_p \leq (n/k)/(3\Delta_1)$. Hence, for all $j = 1, \ldots, d$, we have $n x_j^{-1} - k \Delta_1 \ge n \cdot 3\frac{k}{n} \Delta_1 - k \Delta_1 > 0$. By \eqref{eq:xnkrho:bounds}, we deduce
		\begin{align*}
			|\hat{v}_j(x_j) - x_j|
			&\le \max_{\sigma \in \{-1,+1\}}
			\left|
			\frac{n+1}{n x_j^{-1} + \sigma k\Delta_1 + 1} - x_j
			\right| \\
			&= x_j \max_{\sigma \in \{-1,+1\}}
			\left|
			\frac{n+1}{n + (\sigma k\Delta_1 + 1)x_j} - 1
			\right| \\
			&= x_j \max_{\sigma \in \{-1,+1\}}
			\frac{|1 - (\sigma k\Delta_1 + 1) x_j|}{n + (\sigma k\Delta_1 + 1)x_j}.
		\end{align*}
		Recall that $x_j \ge \tfrac{n}{k} \rho \ge 1$ and $k \Delta_1 \ge 2$. For he case $\sigma = +1$, we have
		\[
		\frac{|1 - (+k\Delta_1 + 1) x_j|}{n + (+k\Delta_1 + 1)x_j} 
		\le \frac{(k\Delta_1 + 1)x_j}{n + (k\Delta_1 + 1)x_j}
		\le \frac{3}{2} \frac{k}{n} \Delta_1 x_j.
		\]
		For the case $\sigma = -1$, we also have
		\[
		\frac{|1 - (-k\Delta_1 + 1) x_j|}{n + (-k\Delta_1 + 1)x_j}
		= \frac{(k\Delta_1-1)x_j - 1}{n + (-k\Delta_1 + 1)x_j}
		\le \frac{k\Delta_1 x_j}{n - k\Delta_1 x_j}
		\le \frac{3}{2} \frac{k}{n} \Delta_1 x_j,
		\]
		since $\eps < 1$ implies $n - k \Delta_1 x_j \ge n - k \Delta_1 \norm{x}_p \ge n - n/3 = 2n/3$. We deduce that
		\[
		\forall j \in \{1,\ldots,d\}: \qquad |\hat{v}_j(x_j) - x_j| \le \frac{3}{2} \frac{k}{n} \Delta_1 x_j^2.
		\]
		Consequently,
		\[
		\norm{\hat{v}(x) - x}_p \le \frac{3}{2} \frac{k}{n} \Delta_1 \norm{x}_p^2,
		\]
		where we use the (easy to prove) inequality $\norm{(x_1^2,\ldots,x_d^2)}_p \le \norm{(x_1,\ldots,x_d)}_p^2$, and thus
		\[
		\norm{\theta_p(\hat{v}(x)) - \theta_p(x)}_p
		\le 3 \tfrac{k}{n} \Delta_1 \norm{x}_p = \eps.
		\]
		The implications \eqref{eq:Apmimplic} now follow by definition of $A_-(\eps)$ and $A_+(\eps)$.
		
		We conclude that, on $\event_1$, the inclusions \eqref{eq:inclusions} hold, as required. The proof Theorem~\ref{thm:conc} is complete. 
	\end{proof}
	
	\bgroup
	\color{chgcol}
	\section{Proofs of Remark~\ref{rem:bias:handle}}
	\label{sec:appendixBias}
	
	As $\norm{x}_p \ge \rho$ implies $\norm{x}_\infty \ge d^{-1/p} \rho$ for $x \in \reals^d$ and $\rho > 0$, the bias term appearing in Theorem~\ref{thm:conc} is bounded by the total variation distance between the measures $\tfrac{n}{k} P_V(\tfrac{n}{k} \cdot)$ and $\mu$ restricted to
	\[
	E_c = \left\{ x \in (0, \infty)^d : \max(x) \geq c \right\},
	\]
	for $c = d^{-1/p} r_+^{-1}$. (This $c$ is not the same as the one in the statement of Theorem~\ref{thm:conc}.) More precisely, we have
	\[
	\sup_{A\in \cA,\; \sigma\in\{+,-\}} 
	\left\vert 
	\tfrac{n}{k} P_V(\tfrac{n}{k}  \Gamma_{A}^{\sigma}) - \mu(\Gamma_{A}^{\sigma})
	\right\vert
	\le
	\sup_{B \in \mathcal{B}(E_c)} \left| \tfrac{n}{k}P_V(\tfrac{n}{k}B) - \mu(B) \right|,
	\]
	where $\mathcal{B}(E_c)$ denotes the Borel $\sigma$-field on $E_c$. Recall $p_U$ and $\lambda$ in Remark~\ref{rem:bias:handle} and define
	\[
	\mathcal{D}_{T}(s) =
	\int_{L_T} \left| s^{d-1} p_U(sy) - \lambda(y) \right| \, \diff y
	\text{ with } L_T = \{y \in (0, T]^d : \min(y) \le 1 \}.
	\]
	The following proposition provides a refined version of the bound \eqref{eq:biasbound} in Remark~\ref{rem:bias:handle} in the paper; the bound \eqref{eq:biasbound} itself appears in the course of the proof of the proposition.
	
	\begin{proposition}
		\label{prop:TVdist}
		For $c > 0$, let $\mathcal{B}(E_c)$ denote the Borel $\sigma$-field on $E_c$. Then, for $t \geq 1/c$,
		\[
		\sup_{B \in \mathcal{B}(E_c)} \left| tP_V(tB) - \mu(B) \right|
		\leq \tfrac{1}{c} \mathcal{D}_{ct}(\tfrac{1}{ct}) 
		+ \mu \left( \left\{ x \in (0,\infty)^d: \max(x) \geq c, \min(x) \leq 1/t \right\} \right).
		\]	
	\end{proposition}
	
	
	\begin{proof}
		Writing $u = t^{-1}$, we get
		\begin{align*}
			\sup_{B \in \borel(E_c)} \left| t \, P_V(t B) - \mu(B)\right|
			&= 	\sup_{B \in \borel(E_c)} \left| t \, P_U(t^{-1} \iota(B)) - \Lambda(\iota(B))\right| \\
			&= 	\sup_{B \in \borel(\iota(E_c))} \left| u^{-1} \, P_U(u B) - \Lambda(B)\right|.
		\end{align*}
		For any Borel set $B \subseteq (0, \infty)^d$, we have, by a change of variables,
		\[
		u^{-1} \, P_U(u B)
		= u^{-1} \int_{u B} p_U(z) \, \diff z	
		= u^{d-1} \int_{B} p_U(uy) \, \diff y.
		\]
		It follows that
		\begin{align*}
			\sup_{B \in \borel(E_c)} 	\left| t \, P_V(t B) - \mu(B)\right|
			&= \sup_{B \in \borel(\iota(E_c))} \left| \int_{B} \left( u^{d-1} p_U(uy) - \lambda(y) \right) \, \diff y \right| \\
			&\le \int_{\iota(E_c)} \left| u^{d-1} p_U(u y) - \lambda(y) \right| \diff y \\
			&= \int_{0 < \min(y) \le 1/c} \left| u^{d-1} p_U(u y) - \lambda(y) \right| \diff y,
		\end{align*}
		which is \eqref{eq:biasbound} in the paper with $u = k/n$ and $c = d^{-1/p} r_+^{-1}$. Since the copula density $p_U$ vanishes outside $[0, 1]^d$, we get
		\[
		\sup_{B \in \borel(E_c)} 	\left| t \, P_V(t B) - \mu(B)\right|
		= \int_{\substack{0 < \min(y) \le 1/c \\ \max(y) \le 1/u}} 
		\left| u^{d-1} p_U(u y) - \lambda(y) \right| \diff y 
		+ \int_{\substack{0 < \min(y) \le 1/c \\ \max(y) > 1/u}} \lambda(y) \, \diff y.
		\]
		The first integral on the right-hand side is denoted by $\mathcal{I}(u, c)$ and is analysed below. The second integral on the right-hand side is
		\begin{equation*}
			\Lambda \left( \left\{ 
			y \in (0, \infty)^d : \min(y) \le 1/c, \, \max(y) > 1/u
			\right\} \right)
			=
			\mu \left( \left\{ 		x \in (0, \infty)^d : \max(x) \ge c, \, \min(x) < u
			\right\} \right).
		\end{equation*}
		By a change of variables $y = c^{-1} x$ (component-wise), we find
		\[
		\mathcal{I}(u,c) 
		=
		c^{-d} \int_{\substack{0 < \min(x) \le 1 \\ \max(x) \le c/u}}
		\left| u^{d-1} p_U(uc^{-1}x) - \lambda(c^{-1}x) \right| \diff y.
		\]
		The density $\lambda$ is homogeneous of order $1-d$, i.e., $\lambda(c^{-1}x) = c^{d-1} x$. Writing $c^{-1}u = s$, we find
		\begin{align*}
			\mathcal{I}(u,c)
			&=
			c^{-d} \int_{\substack{0 < \min(x) \le 1 \\ \max(x) \le c/u}}
			\left| c^{d-1} s^{d-1} p_U(sx) - c^{d-1} \lambda(x) \right| \diff x \\
			&=
			c^{-1} \int_{\substack{0 < \min(x) \le 1 \\ \max(x) \le 1/s}}
			\left| s^{d-1} p_U(sx) - \lambda(x) \right|  \diff x \\
			&=
			c^{-1} \mathcal{D}_{1/s}(s).
		\end{align*}
		Substituting $s = c^{-1} u = (ct)^{-1}$ yields the stated bound. 
	\end{proof}
	
	\begin{example}[The multivariate Cauchy distribution]
		\label{ex:appendixMultCauchy}
		Let us assume that $X$ follows a multivariate Cauchy distribution on the positive orthant whose density is given by
		$$
		f(x) = \frac{2^d \Gamma(\tfrac{1+d}{2})}{\pi^{\frac{1+d}{2}}} \frac{1}{(1+\|x\|_2^2)^{\tfrac{1+d}{2}}}
		$$	
		for $x \geq 0$ and $f(x)=0$ otherwise. 
		We will show that the bound in Proposition~\ref{prop:TVdist} is $\Oh(1/t)$ as $t \to \infty$. This implies that the bias term is $\Oh(k/n)$ as $k = k_n \to \infty$ in such a way that $k/n \to 0$.
		
		For simplicity of notation, let $p = p_U$ denote the probability density function of $U = (1-F_1(X_1),\ldots,1-F_d(X_d))$. Some computations related to the univariate Cauchy distribution lead to
		$$
		p(x) = \pi^{\tfrac{d-1}{2}} \Gamma(\tfrac{1+d}{2}) \frac{\prod_{i=1}^{d} \left( 1 + \tan^2(\tfrac{\pi}{2}(1-x_i)) \right)}{\left( 1 + \sum_{i=1}^{d} \tan^2(\tfrac{\pi}{2}(1-x_i)) \right) ^{\tfrac{1+d}{2}}}.
		$$
		Asymptotic considerations on the tangent function permit to show that
		$$
		\lim_{s \rightarrow 0} s^{d-1} p(sx) = \frac{2^{d-1}\Gamma(\tfrac{1+d}{2})}{\pi^{\tfrac{d-1}{2}}} \frac{x_1^{-2} \cdots x_d^{-2}}{\left( x_1^{-2} + \cdots + x_d^{-2} \right)^{\tfrac{1+d}{2}} } =: \lambda(x).
		$$
		
		We start with the first term in the upper bound of Proposition~\ref{prop:TVdist}. We have the decomposition
		\begin{align}
			\mathcal{D}_{1/s}(s) &= \int_{L_{1/s}} \left(s^{d-1}p(sx) - \lambda(x)\right)_+ \, \diff x \label{eq:decomp1} \\
			&\qquad + \int_{L_{1/s}} \left(\lambda(x)-s^{d-1}p(sx)\right)_+ \, \diff x. \label{eq:decomp2}
		\end{align}
		We start by studying the integral~\eqref{eq:decomp1}. We observe that for $z \in (0,1]$
		$$
		\tan^2 \left(\tfrac{\pi}{2}(1-z)\right)
		= \cot^2 \left( \tfrac{\pi}{2} z \right) 
		= \frac{1}{\sin^2 \left(\tfrac{\pi}{2} z\right)} - 1
		= \frac{2}{1 - \cos(\pi z)} - 1.
		$$
		If $x_i \leq 1$ for every $i \in \{1,\ldots,d\}$, we may apply~\eqref{eq:cosbounds} in Lemma~\ref{lem:cosbounds} below to get
		\begin{align*}
			p(x) &= \pi^{\tfrac{d-1}{2}} \Gamma(\tfrac{1+d}{2}) \frac{\prod_{i=1}^{d} \left( 1 + \tan^2(\tfrac{\pi}{2}(1-x_i)) \right)}{\left( 1 + \sum_{i=1}^{d} \tan^2(\tfrac{\pi}{2}(1-x_i)) \right) ^{\tfrac{1+d}{2}}} \\
			&= \pi^{\tfrac{d-1}{2}} \Gamma(\tfrac{1+d}{2}) \frac{\frac{2}{1 - \cos(\pi x_1)} \cdots \frac{2}{1 - \cos(\pi x_d)}}{ \left( 1 - d + \frac{2}{1 - \cos(\pi x_1)} + \cdots + \frac{2}{1 - \cos(\pi x_d)} \right)^{\tfrac{1+d}{2}}} \\
			&= (2\pi)^{\tfrac{d-1}{2}} \Gamma(\tfrac{1+d}{2}) \frac{\frac{1}{1 - \cos(\pi x_1)} \cdots \frac{1}{1 - \cos(\pi x_d)}}{ \left( \frac{1-d}{2} + \frac{1}{1 - \cos(\pi x_1)} + \cdots + \frac{1}{1 - \cos(\pi x_d)} \right)^{\tfrac{1+d}{2}}} \\
			&\leq (2\pi)^{\tfrac{d-1}{2}} \Gamma(\tfrac{1+d}{2}) \frac{\left( \frac{2}{\pi^2 x_1^2} + \frac{1}{3} \right)  \cdots \left( \frac{2}{\pi^2 x_d^2} + \frac{1}{3} \right)}{ \left( \frac{1-d}{2} + \frac{2}{\pi^2 x_1^2} + \frac{1}{6} + \cdots + \frac{2}{\pi^2 x_d^2} + \frac{1}{6} \right)^{\tfrac{1+d}{2}}}.
		\end{align*}
		Hence, for $0 < s \leq 1$ and $x \in L_{1/s}$, we have the upper bound
		\begin{align*}
			s^{d-1}p(sx) &\leq (2\pi)^{\tfrac{d-1}{2}} \Gamma(\tfrac{1+d}{2}) \frac{s^{d-1} \left( \frac{2}{\pi^2 s^2 x_1^2} + \frac{1}{3} \right)  \cdots \left( \frac{2}{\pi^2 s^2 x_d^2} + \frac{1}{3} \right)}{ \left( \frac{1-d}{2} + \frac{2}{\pi^2 s^2 x_1^2} + \frac{1}{6} + \cdots + \frac{2}{\pi^2 s^2 x_d^2} + \frac{1}{6} \right)^{\tfrac{1+d}{2}}} \\
			&= \frac{2^{d-1}\Gamma(\tfrac{1+d}{2})}{\pi^{\tfrac{d-1}{2}}} \frac{\left( \frac{1}{x_1^2} + \frac{\pi^2 s^2}{6} \right) \cdots \left( \frac{1}{x_d^2} + \frac{\pi^2 s^2}{6} \right)}{\left( \frac{1}{x_1^2} + \cdots + \frac{1}{x_d^2} - \frac{\pi^2s^2}{4}(\tfrac{2d}{3}-1) \right) ^{\tfrac{1+d}{2}}}.
		\end{align*}
		Note that the upper bound converges to $\lambda(x)$ as $s \rightarrow 0$. 
		
		Now observe that
		$$
		\frac{1}{\left( \frac{1}{x_1^2} + \cdots + \frac{1}{x_d^2} - \frac{\pi^2s^2}{4}(\tfrac{2d}{3}-1) \right)^{\tfrac{1+d}{2}}} 
		= \frac{1}{(x_1^{-2} + \cdots + x_d^{-2})^{\tfrac{1+d}{2}}} \frac{1}{\left( 1 - \frac{\pi^2}{4}(\tfrac{2d}{3} -1) \tfrac{s^2}{x_1^{-2} + \cdots + x_d^{-2}} \right)^{\tfrac{1+d}{2}}},
		$$
		with
		$$
		\frac{\pi^2}{4}(\tfrac{2d}{3} -1) \tfrac{s^2}{x_1^{-2} + \cdots + x_d^{-2}} \leq \frac{\pi^2}{4}(\tfrac{2d}{3} -1)s^2.
		$$
		Since we are interested in the limit $s \rightarrow 0$, we may assume that $s^2 \leq \frac{2}{\pi^2(\tfrac{2d}{3} -1)}$ so that we may apply the upper bound in \eqref{eq:powerbound} in Lemma~\ref{lem:powerbound} below. We get
		\begin{align*}
			s^{d-1}p(sx) &\leq \frac{2^{d-1}\Gamma(\tfrac{1+d}{2})}{\pi^{\tfrac{d-1}{2}}} \frac{\left( \frac{1}{x_1^2} + \frac{\pi^2 s^2}{6} \right) \cdots \left( \frac{1}{x_d^2} + \frac{\pi^2 s^2}{6} \right)}{\left( \frac{1}{x_1^2} + \cdots + \frac{1}{x_d^2} - \frac{\pi^2s^2}{4}(\tfrac{2d}{3}-1) \right) ^{\tfrac{1+d}{2}}} \\
			&\leq \frac{2^{d-1}\Gamma(\tfrac{1+d}{2})}{\pi^{\tfrac{d-1}{2}}} \frac{\left( x_1^{-2} + \frac{\pi^2 s^2}{6} \right) \cdots \left( x_d^{-2} + \frac{\pi^2 s^2}{6} \right)}{\left( x_1^{-2} + \cdots x_d^{-2} \right) ^{\tfrac{1+d}{2}}} \cdot \left( 1 + s^2 \frac{2 \big( 2^{\tfrac{1+d}{2}} - 1 \big) \tfrac{\pi^2}{4}(\tfrac{2d}{3} -1)}{x_1^{-2} + \cdots + x_d^{-2}} \right).
		\end{align*}
		Defining $C_1,C_2 > 0$ by
		$$
		C_1 = \frac{2^{d-1}\Gamma(\tfrac{1+d}{2})}{\pi^{\tfrac{d-1}{2}}} \qquad \text{ and } \qquad C_2 = \tfrac{\pi^2}{2} \big( 2^{\tfrac{1+d}{2}} - 1 \big)(\tfrac{2d}{3} -1),
		$$ 
		we thus have the bound
		\begin{align*}
			(s^{d-1}p(sx) - \lambda(x))_+ &\leq s^2 C_1 C_2 \frac{x_1^{-2} \cdots x_d^{-2}}{\left( x_1^{-2} + \cdots x_d^{-2} \right)^{\tfrac{3+d}{2}}} + C_1 \frac{\sum_{k=0}^{d-1} \sum_{I \subset \{1,\ldots,d\}, |I| = k} x_{I}^{-2} (\tfrac{\pi^2 s^2}{6})^{d-k}}{\left( x_1^{-2} + \cdots x_d^{-2} \right)^{\tfrac{1+d}{2}}}  \\
			&\qquad + s^2 C_1 C_2 \frac{\sum_{k=0}^{d-1} \sum_{I \subset \{1,\ldots,d\}, |I| = k} x_{I}^{-2} (\tfrac{\pi^2 s^2}{6})^{d-k}}{\left( x_1^{-2} + \cdots x_d^{-2} \right)^{\tfrac{3+d}{2}}},
		\end{align*}
		where $x_{I}$ denotes the sub-vector of $x$ with coordinates in $I$ and the square is to be understood coordinate-wise.
		
		The first term is the easiest to handle as it is bounded by a multiple of $\lambda(x)$, it is integrable over $L_\infty$ and its contribution to the bias term is at most $O(s^2)$. The third term is bounded by a multiple of the second term times $s^2$ and is therefore negligible in front of the second term. Now note that that the domain of integration $L_{1/s}$ can be rewritten as
		$$
		L_{1/s} = \bigcup_{\emptyset \neq J \subset \{1,\ldots,d\}} B_{J,1/s} \text{ where } B_{J,1/s} = \left\{	x \in (0,1/s]^d: \forall j \in J, x_j \leq 1 ; \forall j \in J^c: x_j > 1 \right\}.
		$$
		Hence, integrating the second term over $L_{1/s}$ implies that we will need to bound integrals of the form
		$$
		s^{2(d-|I|)} \int_{B_{J,1/s}} \frac{\prod_{i \in I} x_i^{-2}}{\left( x_1^{-2} + \cdots + x_d^{-2} \right)^{\tfrac{1+d}{2}}} \diff x,
		$$
		for subsets $I,J \subset \{1,\ldots,d\}$ with $I^c \neq \emptyset$ and $J \neq \emptyset$. Without loss of generality we may assume that $J = \{1,\ldots,j\}$ for some $j \in \{1,\ldots,d\}$ and therefore, a change of variable $x_\ell = y_\ell^{-1}$ for all $\ell \in \{1,\ldots,d\}$ permits to rewrite the integral as
		\begin{multline*}
			s^{2(d - |I|)}
			\int_{x_1=0}^1 \ldots \int_{x_j=0}^1 
			\int_{x_{j+1}=1}^{1/s} \ldots \int_{x_d=1}^{1/s} 
			\left( x_1^{-2} + \cdots + x_d^{-2} \right)^{-(1+d)/2}
			\prod_{i \in I} x_i^{-2}
			\diff x \\
			=
			s^{2 |I^c|}
			\int_{y_1=1}^\infty \ldots \int_{y_j=1}^\infty
			\int_{y_{j+1}=s}^{1} \ldots \int_{y_d=s}^{1}
			\left( y_1^{2} + \cdots + y_d^{2} \right)^{-(1+d)/2}
			\prod_{i \in I^c} y_i^{-2}
			\diff y.
		\end{multline*}
		If $j=d$, the integral is finite and hence the contribution to the bias is at least of the order $O(s^2)$ as $|I^c| \geq 1$. Thus, we shall assume $j < d$. Observe that for every $i \in I^c$, if $i \leq j$, then we may upper bound $y_i^{-1} \leq 1$. Consequently, the worst case will happen whence $I^c \cap J = \emptyset$. In this case, bounding $\left( y_1^{2} + \cdots + y_d^{2} \right)^{-(1+d)/2}$ by $\left( y_1^{2} + \cdots + y_j^{2} \right)^{-(1+d)/2}$ permits to upper bound the latter integral by
		$$
		s^{2|I^c|}
		\int_{y_1=1}^\infty \ldots \int_{y_j=1}^\infty
		(y_1^2 + \cdots + y_j^2)^{-(1+d)/2} \cdot
		\diff y_1 \cdots \diff y_j
		\cdot
		\prod_{i \in I^c} \int_{y_i=s}^1 y_i^{-2} \, \diff y_i,
		$$ 
		which is of order $O(s^{|I^c|})$ as the integral over $(y_1,\ldots,y_j)$ is finite. Since $|I^c| \geq 1$, we find that the order of the first integral in the bias decomposition~\eqref{eq:decomp1} is of the order $O(s)$.
		
		We now claim that second term in the decomposition~\eqref{eq:decomp2} is always equal to zero for sufficiently small $s>0$ and therefore conclude that $\mathcal{D}_{1/s}(s) = O(s)$ as $s \rightarrow 0$. 
		Previous computations and bounds~\eqref{eq:cosbounds} show that for every $x$ such that $x_i \leq 1$ for $i \in \{1,\ldots,d\}$,
		\begin{align*}
			p(x) &= (2\pi)^{\tfrac{d-1}{2}} \Gamma(\tfrac{1+d}{2}) \frac{\frac{1}{1 - \cos(\pi x_1)} \cdots \frac{1}{1 - \cos(\pi x_d)}}{ \left( \frac{1-d}{2} + \frac{1}{1 - \cos(\pi x_1)} + \cdots + \frac{1}{1 - \cos(\pi x_d)} \right)^{\tfrac{1+d}{2}}} \\
			&\geq (2\pi)^{\tfrac{d-1}{2}} \Gamma(\tfrac{1+d}{2}) \frac{\left( \tfrac{2}{\pi^2 x_1^2} + \tfrac{1}{6} \right) \cdots \left( \tfrac{2}{\pi^2 x_d^2} + \tfrac{1}{6} \right)}{\left( \tfrac{1-d}{2} + \tfrac{2}{\pi^2x_1^2} + \tfrac{1}{3} + \cdots + \tfrac{2}{\pi^2x_d^2} + \tfrac{1}{3} \right)^{\tfrac{1+d}{2}}}.
		\end{align*}
		Hence, for $x \in L_{1/s}$, we have
		\begin{align*}
			s^{d-1}p(sx) 
			&\geq (2\pi)^{\tfrac{d-1}{2}} \Gamma(\tfrac{1+d}{2})  \frac{s^{d-1} \left( \tfrac{2}{\pi^2 s^2 x_1^2} + \tfrac{1}{6} \right) \cdots \left( \tfrac{2}{\pi^2 s^2 x_d^2} + \tfrac{1}{6} \right)}{\left( \tfrac{2}{\pi^2 s^2 x_1^2} + \cdots + \tfrac{2}{\pi^2 s^2 x_d^2} - (\tfrac{d}{6} - 1/2)  \right)^{\tfrac{1+d}{2}}} \\
			&= \frac{2^{d-1} \Gamma(\tfrac{1+d}{2})}{\pi^{\tfrac{d-1}{2}}} \frac{(x_1^{-2} + \tfrac{\pi^2s^2}{12}) \cdots (x_d^{-2} + \tfrac{\pi^2s^2}{12})}{\left( x_1^{-2} + \cdots + x_d^{-2} - s^2\pi^2(\tfrac{d}{3} - 1) \right)^{\tfrac{1+d}{2}}}.
		\end{align*}
		Note that the lower bound converges to $\lambda(x)$ as $s \rightarrow 0$. Since we are interested in $s \rightarrow 0$, we may assume that $s^2 \leq (2\pi^2(d/3-1))^{-1}$ and apply the lower bound in~\eqref{eq:powerbound} in Lemma~\ref{lem:powerbound} below to get that for every $x \in L_{1/s}$,
		$$
		s^{d-1}p(sx) \geq \frac{2^{d-1} \Gamma(\tfrac{1+d}{2})}{\pi^{\tfrac{d-1}{2}}} \frac{(x_1^{-2} + \tfrac{\pi^2s^2}{12}) \cdots (x_d^{-2} + \tfrac{\pi^2s^2}{12})}{\left( x_1^{-2} + \cdots x_d^{-2} \right)^{\tfrac{1+d}{2}}} \left( 1 + s^2 \frac{\pi^2(\tfrac{d}{3} - 1)(\tfrac{1+d}{2})}{x_1^{-2} + \cdots x_d^{-2}} \right).
		$$
		This shows in particular that $s^{d-1}p(sx) - \lambda(x) \geq 0$ for every value of $x \in L_{1/s}$ and $s \leq (2\pi^2(d/3-1))^{-1/2}$ so that the integral
		$$
		\int_{L_{1/s}} (\lambda(x)-s^{d-1}p(sx))_+ \diff x = 0,
		$$
		for those values of $s$ and we get that the bias $\mathcal{D}_{1/s}(s) = O(s)$ as $s \rightarrow 0$.
		
		To deal with the second term in the upper bound of Proposition~\ref{prop:TVdist}, we note that since,
		$$
		\frac{\diff \mu}{\diff x}(x) = \lambda(\iota(x)) x_1^{-2} \cdots x_d^{-2} = C_1 \|x\|_2^{-1-d},
		$$
		the density associated to $\Phi_2$ is constant, say $\varphi$, so that
		\begin{align*}
			\mu \left( \left\{ x \in (0,\infty)^d: \max(x) \geq c, \min(x) \leq 1/t \right\} \right) 
			&\leq c^{-1} \Phi_2 \left( \{\theta \in \sphere_2: \min(\theta) \leq \tfrac{1}{ct}\} \right) \\
			&= c^{-1} \varphi \leb_{d-1} \left( \{\theta \in \sphere_2: \min(\theta) \leq \tfrac{1}{ct}\} \right) = O(1/t),
		\end{align*}
		when $t\rightarrow \infty$. This concludes the analysis of the bias term for the multivariate Cauchy distribution.
	\end{example}
	
	\begin{lemma}
		\label{lem:cosbounds}
		We have
		\begin{equation}
			\label{eq:cosbounds}
			\frac{1}{6} \leq \frac{1}{1 - \cos(t)} - \frac{2}{t^2} \leq \frac{1}{3} \qquad \text{ for } 0 < t \leq \pi.
		\end{equation}
	\end{lemma}
	
	\begin{proof}
		Note that we may rewrite the inequality as follows: for $0 \leq t \leq \pi$, we must have 
		\begin{align*}
			\frac{1}{6} \leq \frac{1}{1-\cos t} - \frac{2}{t^2} \leq \frac{1}{3}  
			&\iff \frac{1}{6} + \frac{2}{t^2} \leq \frac{1}{1-\cos t} \leq \frac{1}{3} + \frac{2}{t^2} \\
			&\iff \frac{12+t^2}{6t^2} \leq \frac{1}{1-\cos t} \leq \frac{6+t^2}{3t^2} \\
			&\iff  \frac{3t^2}{6+t^2} \leq 1 - \cos t \leq \frac{6t^2}{12+t^2},
		\end{align*}
		which gives the inequalities
		\begin{align}
			\cos t &\leq 1 - \frac{3t^2}{6+t^2} = \frac{6 - 2t^2}{6+t^2} \label{eq:cosbound1} \\
			\cos t &\geq 1 - \frac{6t^2}{12+t^2} = \frac{12-5t^2}{12+t^2} \label{eq:cosbound2}.
		\end{align}
		We start by showing~\eqref{eq:cosbound1}. Taylor's theorem implies that
		$$
		\cos t \leq 1- \frac{t^2}{2!} + \frac{t^4}{4!} - \frac{t^6}{6!} + \frac{t^8}{8!}.
		$$
		Consequently,
		\begin{align*}
			(6+t^2) \cos t &\leq (6+t^2) \cdot \left( 1- \frac{t^2}{2!} + \frac{t^4}{4!} - \frac{t^6}{6!} + \frac{t^8}{8!} \right) \\
			&= 6 - 3t^2 + \frac{t^4}{4} - \frac{t^6}{120} + \frac{t^8}{6720} + t^2 - \frac{t^4}{2} + \frac{t^6}{24} - \frac{t^8}{720} + \frac{t^10}{40320} \\
			&= 6 - 2t^2 + \frac{t^4}{40320} \left( t^6 - 50t^4 + 1344t^2 - 10080 \right).
		\end{align*}
		Hence, if we show that the polynomial $R(s) := s^3 - 50s^2 + 1344s - 10080$ is strictly negative on $0 \leq s \leq \pi^2$, we are done proving~\eqref{eq:cosbound1}. 
		It is sufficient to observe that $R(\pi^2) < 0$ and $R'(s) \geq 0$ for the concerned values of $s$ since 
		$$
		R'(s) = 3s^2 - 100s + 1344
		$$
		satisfies $\Delta_R = 1000 - 12 \cdot 1344 < 0$ and $R'(0) = 1344 > 0$.
		
		To prove~\eqref{eq:cosbound2}, we follow the same path: Taylor's theorem implies that
		$$
		\cos t \geq 1 - \frac{t^2}{2} + \frac{t^4}{4!} - \frac{t^6}{6!}.
		$$
		Consequently,
		\begin{align*}
			(12+t^2)\cos t &\geq (12+t^2) \cdot \left( 1 - \frac{t^2}{2} + \frac{t^4}{4!} - \frac{t^6}{6!}\right) \\
			&= 12 - 6t^2 + \frac{t^4}{2} - \frac{t^6}{60} + t^2 - \frac{t^4}{2} + \frac{t^6}{24} - \frac{t^8}{720} \\
			&= 12 - 5t^2 + \frac{t^6}{720} \left( -t^2 + 18 \right).
		\end{align*}
		Since $-t^2 + 18 \geq 0$ for $0 \leq t \leq \sqrt{18}$ and $\sqrt{18} > \pi$, we have proven~\eqref{eq:cosbound2}.
	\end{proof}
	
	\begin{lemma}
		\label{lem:powerbound}
		For $a \in [0, 1/2]$, we have
		\begin{equation}
			\label{eq:powerbound}
			1 + \left( \frac{1+d}{2} \right)  a 
			\leq (1-a)^{-\tfrac{1+d}{2}} 
			\leq 1 + 2 \left( 2^{\tfrac{1+d}{2}} - 1 \right) a.
		\end{equation}
	\end{lemma}
	
	\begin{proof}
		To prove the upper bound, we will use a convexity argument. Note that the function $\phi(a) = (1-a)^{- \tfrac{1+d}{2}}$ on the left hand side is a convex function of its argument. Since it is a $C^2$ function, we may prove that fact by differentiating it two times and show that $\phi''(a) > 0$ for $0 \leq a \leq 1/2$. Since we compute that 
		$$
		\phi''(a) = \tfrac{d+1}{2} \cdot \tfrac{d+3}{2} \cdot (1-a)^{- \tfrac{d+5}{2}},
		$$
		the convexity on the domain of interest is proven. This implies that for any $\lambda \in [0,1]$ and any pair of points $a_1,a_2 \in [0,1/2]$,
		$$
		\phi(\lambda a_1 + (1-\lambda) a_2) \leq \lambda \phi(a_1) + (1-\lambda) \phi(a_2).
		$$
		In particular, the graph of $\phi$ is anywhere bounded by the straight line joining the points $\phi(0) = 1$ and $\phi(1/2) = 2^{\tfrac{1+d}{2}}$ on the domain of interest. This affine function has analytic expression $$L(a) = 1 + 2 \left( 2^{\tfrac{1+d}{2}} - 1 \right) a$$ and corresponds to the upper bound in the statement.
		
		To prove the lower bound, define $\phi(a) = (1-a)^{-(d+1)/2}$. Then, we already showed that it satisfies $\phi''(a) \geq 0$ for $a \in [0,1/2]$. Consequently, Taylor's theorem ensures that for every $a$ in the region of interest
		$$
		\phi(a) = \phi(0) + \phi'(0)a + R_0^1\phi(a) \geq \phi(0) + \phi'(0)a,
		$$
		since, by Lagrange's formula, the rest $R_0^1\phi(a) = \tfrac{\phi''(c)}{2}a^2$ for some $c \in (0,a)$ and the second derivative is positive at this point. Observing that $\phi(0)= 1$ and $\phi'(0) = (1+d)/2$ permits to conclude.
	\end{proof}
	\egroup

	\section{Proofs of examples}
	\label{sec:appendixExamples}
	\begin{proof}[Proof of Example~\ref{ex:linear}]
		We first consider Assumption~\ref{ass:A}.
		Restricting $a$ and $\beta$ to have rational coordinates yields a countable collection $\cA_0 \subset \cA$ satisfying item~(i) in Assumption~\ref{ass:A}.
		Item~(ii) is satisfied by definition.
		For $A_{a, \beta, \tau} \in \cA$ and $\eps > 0$, define the inner and outer hulls
		\begin{align*}
			A_{a, \beta, \tau; -}(\eps) &= A_{a, \beta - \sqrt{d} \eps, \tau + \eps}, \\
			A_{a, \beta, \tau; +}(\eps) &= A_{a, \beta + \sqrt{d} \eps, \tau - \eps},
		\end{align*}
		with the convention that $\sphere_p^\upsilon = \sphere_p$ if $\upsilon < 0$ (a case which arises if $\eps > \tau$). 
		We show that item~(iii) in Assumption~\ref{ass:A} is satisfied, provided the angular measure $\Phi_p$ has a bounded density on $\sphere_p^\tau$ with respect to $(d-1)$-dimensional Lebesgue measure.
		First, we show the inclusions~\eqref{eq:Ahull:inclusions}.
		\begin{itemize}
			\item For $x \in A_{a, \beta, \tau; -}(\eps)$ and for $y \in \sphere_p$ such that $\norm{y-x}_\infty \le \eps$, we have $y \in A_{a, \beta,\tau}$: indeed, 
			\[ 
			y_1 \wedge \cdots \wedge y_d 
			\ge x_1 \wedge \cdots \wedge x_d - \eps 
			> \tau + \eps - \eps 
			= \tau 
			\]
			as well as (by equivalence of norms~\eqref{eq: equiv norms})
			\[ 
			\inpr{a, y} 
			= \inpr{a, x} + \inpr{a, y-x}
			\le \beta - \sqrt{d} \eps + \norm{a}_2 \norm{y-x}_2
			\le \beta - \sqrt{d} \eps + \sqrt{d} \norm{y-x}_\infty
			\le \beta.
			\]
			This is the first inclusion in \eqref{eq:Ahull:inclusions}.
			\item For $x \in A_{a, \beta, \tau}$ and for $y \in \sphere_p$ such that $\norm{y-x}_\infty \le \eps$, we have
			\[
			y_1 \wedge \cdots \wedge y_d 
			\ge x_1 \wedge \cdots \wedge x_d - \eps 
			> \tau - \eps
			\]
			so that $y \in \sphere_p^{\tau - \eps}$ (if $\eps > \tau$ this trivial since $\sphere_p^\upsilon = \sphere_p$ for $\upsilon < 0$) together with
			\[
			\inpr{a, y}
			= \inpr{a, x} + \inpr{a, y-x}
			\le \beta + \norm{a}_2 \norm{y-x}_2
			\le \beta + \sqrt{d} \norm{y-x}_\infty
			\le \beta + \sqrt{d} \eps.
			\]
			This implies the second inclusion in \eqref{eq:Ahull:inclusions}.
		\end{itemize}
		The difference between the inner and outer hulls is
		\begin{equation*}
			A_{a, \beta, \tau; +}(\eps) \setminus A_{a, \beta, \tau; -}(\eps)
			\subseteq
			\left\{ 
			x \in \sphere_p : \ 
			\beta - \sqrt{d} \eps < \inpr{a, x} \le \beta + \sqrt{d} \eps 
			\right\} 
			\cup ( \sphere_p^{\tau-\eps} \setminus \sphere_p^{\tau+\eps} ).
		\end{equation*}
		Since $\norm{a}_2 = 1$, the $(d-1)$-dimensional Lebesgue measure of the set on the right-hand side is bounded by a constant multiple of $\eps$.
		
		Next we show that Assumption~\ref{ass:VF} is satisfied.
		The VC-dimension is preserved under bijections.
		We identify $E = [0, \infty)^d \setminus \{(0, \ldots, 0)\}$ with the product set $(0, \infty) \times \sphere_p$ via $x \mapsto (\norm{x}_p, x / \norm{x}_p)$.
		In the latter space, we need to establish the finiteness of the VC-dimensions of the collections $\{ \Upsilon_A^-(r, h) : A \in \cA\}$ and $\{ \Upsilon_A^+(r, h) : A \in \cA \}$ for fixed $r, h > 0$, where
		\[
		\Upsilon_A^\sigma(r, h)
		=
		\left\{ 
		(u, \theta) \in (\tfrac{1}{r}, \infty) \times \sphere_p : \
		\theta \in A_{\sigma}(h u)
		\right\}
		\]
		for $\sigma \in \{-, +\}$ and $A \in \cA$. For $A = A_{a, \beta, \tau}$ we have 
		\[ 
		\theta \in A_-(h u )
		\iff
		\left[
		\theta_1 \wedge \cdots \wedge \theta_d \ge \tau + h  u 
		\text{ and }
		\inpr{a, \theta} \le \beta - \sqrt{d} h u
		\right].
		\]
		This corresponds to $d+1$ linear inequality constraints on $( u , \theta)$ as $(a, \beta)$ ranges over $\reals^d \times \reals$.
		The VC dimension of $\{ \Upsilon_A^-(r, h) : A \in \cA\}$ as a collection of subsets of $(0, \infty) \times \sphere_p$ is therefore finite \citep[Lemma~2.6.17 and Exercise~14 on page~152]{vdvaart+w:1996}, and the same is then true for $\{ \Gamma_A^-(r, h) : A \in \cA\}$ as a collection of subsets of $E$.
		The argument for the outer hulls is similar.
	\end{proof}

	\begin{proof}[Proof of Example~\ref{ex:intersec-union}]
		We first consider the collection of intersections.
		In Assumption~\ref{ass:A}, item~(i) follows by considering the countable collection of intersections $\cA_{1,0} \sqcap \cA_{2,0}$, where $\cA_{j,0}$ is the countable subset of $\cA_j$ for $j \in \{1, 2\}$.
		Item~(ii) is trivially satisfied.
		Regarding~(iii): given $A_j \in \cA_j$ with inner and outer hulls $A_{j,\sigma}(\eps)$, the inner and outer hulls of $A_1 \cap A_2$ can chosen to be $A_{1,-}(\eps) \cap A_{2,-}(\eps)$ and $A_{1,+}(\eps) \cap A_{2,+}(\eps)$.
		The two inclusions~\eqref{eq:Ahull:inclusions} are straightforward to verify. The measure of the set difference between the inner and outer hulls can be controlled via
		\[
		\bigl( A_{1,+}(\eps) \cap A_{2,+}(\eps) \bigr)
		\setminus
		\bigl( A_{1,-}(\eps) \cap A_{2,-}(\eps) \bigr)
		\subseteq
		\bigl( A_{1,+}(\eps) \setminus A_{1,-}(\eps) \bigr)
		\cup
		\bigl( A_{2,+}(\eps) \setminus A_{2,-}(\eps) \bigr).
		\]
		If $c_1$ and $c_2$ denote the constants on the right-hand of \eqref{eq:Phi+-} for $\cA_1$ and $\cA_2$, respectively, then $c_1 + c_2$ is a valid constant for $\cA_1 \sqcap \cA_2$.
		
		To show that the collections of framing sets derived from the inner and outer hulls thus constructed have a finite VC-dimension (Assumption~\ref{ass:VF}), it suffices to observe that, by definition,
		\[
		\Gamma_{A_1 \cap A_2}^\sigma(r, h) 
		= \Gamma_{A_1}^\sigma(r, h) \cap \Gamma_{A_2}^\sigma(r, h)
		\]
		and thus
		\[
		\{ \Gamma_{A}^\sigma(r, h) : A \in \cA_1 \sqcap \cA_2 \}
		=
		\{ \Gamma_{A_1}^\sigma(r, h) : A_1 \in \cA_1 \}
		\sqcap
		\{ \Gamma_{A_2}^\sigma(r, h) : A_2 \in \cA_2 \}.
		\]
		The collection on the left-hand side has a finite VC-dimension since, by assumption, each of the two collections on the right-hand side has a finite VC-dimension; see for instance \citep[Lemma~2.6.17]{vdvaart+w:1996}.
		
		For the collection of unions, the verification of Assumptions~\ref{ass:A} and~\ref{ass:VF} is completely similar. The inner and outer hulls of a union $A_1 \cup A_2$ are now defined as the unions of the inner and outer hulls of $A_1$ and $A_2$.
	\end{proof}

	\section{Proofs of classification application}
	\label{sec:appendix_classif}
	
	In the proofs, we find it sometimes convenient to take explicit note of the map $T : \reals^d \to [0, \infty)^d$ used to transform the marginal distributions of the predictor variable; typically, $T = v$ or $T = \hat{v}$. In line with the theoretical and empirical classification risks $L_t(g)$ and $\widehat{L}^\tau(g)$ in \eqref{classifrisk-transfoDependent} and \eqref{eq:emp-risk-extreme} in the paper, define
	\begin{align*}
		L_t(g, T) 
		&= t \, \PP{g(T(X)) \ne Y, \, \norm{T(X)}_p \ge t}, \\
		\widehat{L}^\tau(g, T) 
		&= \frac{1}{k} \sum_{i=1}^n \un \{ 
		g(T(X_i)) \ne Y_i, \, 
		\theta_p(T(X_i)) \in \sphere_p^\tau, \, 
		\norm{T(X_i)}_p \ge n/k 
		\}.
	\end{align*}
	Then $L_t(g)$ in \eqref{classifrisk-transfoDependent} corresponds to $L_t(g, v)$ here and $\widehat{L}^\tau(g)$ in \eqref{eq:emp-risk-extreme} to $\widehat{L}^\tau(g, \hat{v})$.
	
	\begin{proof}[Proof of Lemma~\ref{lem:limLt-v-angular classif}]
		We prove the first statement only. The proof of the second one follows the same lines and is left to the reader. The third statement is obtained by adding up the left and right hand sides of the first two identities.
		
		Decompose $L_{t}^{>\tau}$ into a type-I and a type-II risk: $L_t^{>\tau}(g,T) = L_{t,+}^{>\tau}(g,T) + L_{t,-}^{>\tau}(g,T)$ with
		\begin{equation*}
			L_{t,\pm}^{>\tau}(g,T) = t \PP{g(T(X))\neq Y, \, Y= \pm 1, \, \theta_p(T(X)) \in \sphere_p^\tau, \, \norm{T(X)}_p \geq t}.
		\end{equation*}
		Consider the cones generated by regions $\sphere_p^\pm(g) = \{ x \in \sphere_p : g(x) = \pm 1\}$, that is, $R_p^{\pm}(g) = \{ tx : x \in  \sphere_p^\pm(g), \, t\ge 1\}$.  
		Equipped with this notation we may write
		\begin{align*}
			L_{t,+}^{>\tau}(g,T) 
			&= t  \PP{ 
				T(X) \in  R_p^-(g), \, 
				Y = +1, \, 
				\theta_p(T(X)) \in \sphere_p^\tau, \,
				\norm{T(X)}_p \ge t 
			}, \\
			L_{t,-}^{>\tau}(g,T) 
			&= t \PP{
				T(X) \in R_p^+(g), \,
				Y = -1, \,
				\theta_p(T(X)) \in \sphere_p^\tau, \,
				\norm{T(X)}_p \ge t 
			}.  
		\end{align*}
		Setting the standardization function $T$ to $v$ yields $T(X) = V$ and thus
		\begin{align*}
			L_{t,+}^{>\tau}(g,\Vf)
			& = 
				t \PP{V \in R_p^-(g), \, \theta_p(V) \in \sphere_p^\tau, \, \norm{V}_p \ge t, \, Y = +1 }\\
			& = \varrho 
				t \PP{\theta_p(V) \in \sphere_p^-(g) \cap \sphere_p^\tau, \, \norm{V}_p \ge t
					\given  Y = +1} \\
		&\to \varrho 
			\Phi_p^+(\sphere_p^-(g) \cap \sphere_p^\tau), \qquad t \to \infty,
	\end{align*}
	where the last convergence occurs because of Assumption~\ref{assum:smoothness} and the fact that $\Phi_p^+$ is dominated by $\Phi_p$, so that $\sphere_p^-(g) \cap \sphere_p^\tau$ is a $\Phi_p^+$-continuity set. 
	Proceeding similarly with $L_{t,-}^{>\tau}(g,\Vf)$, the result is obtained.
\end{proof}

The next result parallels Lemma~\ref{lem:limLt-v-angular classif} by relating the
empirical risk with the empirical angular measure of the positive and
negative classes.

Consider  the  type-I and type-II empirical errors,
\[
\widehat{L}_\pm^\tau(g, T) =
\frac{1}{k} \sum_{i=1}^n \un\{ 
g(T(X_i)) \neq Y_i, \, 
Y_i = \pm 1, \, 
\theta_p(T(X_i)) \in \sphere_p^\tau, \,
\norm{T(X_i)}_p \geq n/k 
\}.
\]
Setting $T = \hat{v}$ as in \eqref{eq:empcdf} yields $T(X_i) = \hV_i$ and thus
\begin{equation*}   
	\widehat{L}_+^\tau(g, \hVf) 
	= \frac{1}{k} \sum_{i=1}^n \un\{
	\theta_p(\hV_i) \in \sphere_p^-(g) \cap \sphere_p^\tau, \,
	Y_i = +1, \,
	\norm{\hV_i}_p \ge n / k 
	\}.
\end{equation*}
Recall from \eqref{eq:empiricalSignedAM} the empirical angular measures $\widehat{\Phi}_p^\sigma$ of the positive and negative instances.
A similar treatment of $\widehat{L}_-^\tau(g, \hVf) $ yields immediately: 

\begin{lemma}
	\label{lem:empirical-risk-angularMeasure}
	In the case where $T$ is the rank transformation $\hVf$,
	\begin{align*}
		\widehat{L}_+^\tau(g, \hVf)
		=  \frac{k_+}{k}
			\widehat\Phi_p^+ (\sphere_p^-(g) \cap \sphere_p^\tau) \,, \qquad 
		\widehat{L}_-^\tau(g, \hVf)
		=  \frac{k_-}{k}
			\widehat\Phi_p^- (\sphere_p^+(g) \cap \sphere_p^\tau).
	\end{align*}
\end{lemma}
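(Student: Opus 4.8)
The plan is to simply unwind the definitions: the type-I empirical error $\widehat L_+^\tau(g,\hVf)$ counts, up to the prefactor $1/k$, the pseudo-observations $\widehat V_i$ that are misclassified by $g$, that carry label $Y_i = +1$, whose angle lies in $\sphere_\tau$, and whose norm is at least $n/k$. I want to identify this count, after a suitable rescaling, with $\widehat\Phi_+(\sphere_g^-\cap\sphere_\tau)$ as defined in~\eqref{eq:empiricalSignedAM}; the identity for $\sigma = -$ will then follow by the symmetric argument with $+1$ and $-1$ interchanged.

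First I would observe that, restricted to indices $i$ with $Y_i = +1$, the misclassification event $\{g(\hV_i)\ne Y_i\}$ is exactly $\{g(\hV_i) = -1\}$. Since $g$ is an angular classifier, $g(\hV_i) = g(\theta(\hV_i))$, so by the definition of $\sphere_g^-$ this event equals $\{\theta(\hV_i)\in\sphere_g^-\}$. Intersecting with the constraint $\theta(\hV_i)\in\sphere_\tau$ already present in~\eqref{eq:emp-risk-extreme} collapses the two angular conditions into the single membership $\theta(\hV_i)\in\sphere_g^-\cap\sphere_\tau$, which is a Borel subset of $\sphere$ by Assumption~\ref{assum:smoothness}. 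Hence, for indices $i$ with $Y_i = +1$,
\[
	\un\{g(\hV_i) \ne Y_i,\ \theta(\hV_i) \in \sphere_\tau\}
	= \un\{\theta(\hV_i) \in \sphere_g^- \cap \sphere_\tau\}.
\]

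Substituting this back into the definition of $\widehat L_+^\tau(g,\hVf)$, then inserting the factor $k_+/k_+$ with $k_+ = kn_+/n$, I would recognize the remaining sum as $\widehat\Phi_+$ evaluated at $A = \sphere_g^-\cap\sphere_\tau$, yielding
\[
	\widehat L_+^\tau(g,\hVf)
	= \frac{k_+}{k}\,\cdot\,\frac{1}{k_+}\sum_{i=1}^n
	\un\{Y_i = +1\}\,\un\{\theta(\hV_i)\in\sphere_g^-\cap\sphere_\tau,\ \norm{\hV_i}\ge n/k\}
	= \frac{k_+}{k}\,\widehat\Phi_+(\sphere_g^-\cap\sphere_\tau).
\]
The second identity is obtained by the same computation after swapping the roles of $+1$ and $-1$, of $\sphere_g^-$ and $\sphere_g^+$, and of $k_+$ and $k_-$.

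I do not expect a genuine obstacle here: the statement is a bookkeeping identity that follows directly from the definition~\eqref{eq:empiricalSignedAM} and the angularity of $g$. The only point worth a word is the degenerate case $n_+ = 0$, in which $\widehat\Phi_+$ is not defined; there $k_+ = 0$, the sum defining $\widehat L_+^\tau$ is empty, and both sides vanish under the convention $0\cdot\widehat\Phi_+ = 0$. Since $0 < p < 1$, this case occurs with probability tending to zero as $n$ grows, so it plays no role in the subsequent asymptotic and concentration arguments.
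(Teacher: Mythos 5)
Your proof is correct and follows essentially the same route as the paper, which states the lemma ``immediately'' after rewriting $\widehat{L}_+^\tau(g,\hVf)$ using angularity of $g$ exactly as you do. One small inaccuracy worth flagging: the Borel measurability of $\sphere_g^-\cap\sphere_\tau$ does not come from Assumption~\ref{assum:smoothness} (which only controls $\Phi$-null boundaries) but simply from $g$ being Borel measurable and $\sphere_\tau$ being open; this does not affect the argument.
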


\begin{proof}[Proof of Theorem~\ref{theo:deviationsClassif}]
	Recall from the proof of Lemma~\ref{lem:limLt-v-angular classif} the decomposition of $L_\infty^{>\tau}$ into type-I and type-II errors,  $L_\infty^{>\tau} =  L_{\infty,+}^{>\tau} + L_{\infty,-}^{>\tau}  $ with $ L_{\infty,+}^{>\tau}(g) = \varrho \Phi_p^+(\sphere_p^-(g) \cap \sphere_p^\tau)$ and
	$ L_{\infty,-}^{>\tau}(g) = (1-\varrho)\Phi_p^-(\sphere_p^+(g) \cap \sphere_p^\tau)$.  Recall also the identities for their empirical counterparts $\widehat L_{\pm}^\tau$  in Lemma~\ref{lem:empirical-risk-angularMeasure}. For $g\in\mathcal{G}$, the deviations of the empirical risk may be bounded by the sum of the deviations of the two error types, 
	\begin{align}
		\left|\widehat{L}^\tau(g, \hat v) - L_\infty^{>\tau}(g)\right|
		&= \left|\widehat{L}_+^\tau(g, \hat v) - L_{\infty,+}^{> \tau}(g) +
		\widehat{L}_-^\tau(g, \hat v) -   L_{\infty,-}^{> \tau}(g)\right| \nonumber \\
		&\leq \left|\widehat{L}_+^\tau(g, \hat v) - L_{\infty, +}^{> \tau}(g) \right| +
		\left|\widehat{L}_-^\tau(g, \hat v) -  L_{\infty, -}^{>\tau}(g)\right|.
		\label{eq:excessRisk-decompose-biasVar2}
	\end{align}
	
	Let us focus on the first term of the sum. The treatment of the second term is entirely similar and this accounts for the factor two on the right-hand side of the bound in the theorem. From Lemma~\ref{lem:empirical-risk-angularMeasure} and the definition of $L_{\infty,+}^{>\tau}(g)$ recalled above, we have
	\begin{equation*}
		|\widehat{L}_+^\tau(g, \hat v) -  L_{\infty, +}^{>\tau } (g) | 
		=  \bigg|\frac{k_+}{k}
		\widehat\Phi_p^+
		(\sphere_p^-(g) \cap \sphere_p^\tau)
		- \varrho 
		\Phi_p^+(\sphere_p^-(g) \cap \sphere_p^\tau)
		\bigg| , 
	\end{equation*}
	which suggests extending the concentration results concerning the
	empirical measure $\widehat\Phi_p$ to its conditional version
	$\widehat\Phi_p^+$. To do so we shall work on the product
	space $\rset^d\times \{-1,1\}$. We first introduce some notation.
	Let $Q$ be the joint distribution of the pair $(X,Y)$ on  
	$\rset^d\times \{-1,1\}$  and let $Q_n$ denote its empirical version, 
	$Q_n = n^{-1} \sum_{i=1}^n \delta_{(X_i,Y_i)}$. 
	As in Section~\ref{sec:traditionalEmpirical} we can and will assume that each margin $X_j$ is unit-Pareto so that $X = V$. The empirical measure of the rank-transformed data is
	\[
	\widehat Q_n = \frac{1}{n} \sum_{i=1}^n \delta_{(\hat V_i,Y_i)} =
	Q_n \circ (\hVf, \id)^{-1}
	\]
	where $\id$ is the identity function mapping (on $\{-1,1\}$).
	
	Define the limit measure on $E \times \{-1,1\}$: for $\sigma \in \{-, +\}$ and Borel sets $B \subseteq E$ bounded away from the origin with $\mu_\sigma(\partial B) =0$, put
	\[
	\nu(B \times\{ \sigma 1 \}) = \lim_{t\to\infty} t \PP{V \in tB, Y = \sigma 1} = \PP{Y = \sigma 1} \, \mu_\sigma(B),  
	\]
	a limit which exists by the conditional regular variation assumption~\eqref{eq:conditional_RV}.  Notice that $\nu$ is homogeneous of order $-1$ w.r.t. the first component. 
	With this notation and the one borrowed from the proof of Theorem~\ref{thm:conc} (see~\eqref{eq:GammaArho} for the definition of $\widehat{\Gamma}_A$), we have, for $A \subseteq\sphere_p$,
	\begin{align*}
		\tfrac{k_+}{k} \widehat\Phi_p^+(A)
		& = \tfrac{n}{k} \widehat Q_n \left(\tfrac{n}{k} \mathcal{C}_A \times \{+1\} \right)\\
		& = \tfrac{n}{k} Q_n \left(\hVf^{-1}(\tfrac{n}{k} \mathcal{C}_A) \times \{+1\} \right) \\
		& = \tfrac{n}{k} Q_n(\widehat{ \Gamma}_A \times \{+1\} )
	\end{align*}
	and
	\[
	\varrho \Phi_p^+(A) = \nu(\cone_A  \times \{ +1 \}). 
	\]
	It is shown in the proof of Theorem~\ref{thm:conc} that under the assumptions of the statement,
	there exists an event $\mathcal{E}_1$ of probability at least $1 - d\delta/(d+1)$ on which $\frac{n}{k}\Gamma_{A}^-\subseteq \widehat\Gamma_A\subseteq \frac{n}{k}\Gamma_{A}^+ $,  see~\eqref{eq:inclusions}. 
	In addition recall that \[
	\forall A \in \mathcal{A}, \qquad 
	\Gamma_{A}^- \subseteq \cone_A \subseteq \Gamma_{A}^+.
	\]
	Thus on the event $\mathcal{E}_1$, we can decompose the error as in the proof of Theorem~\ref{thm:conc} into 
	\begin{align*}
		\tfrac{k_+}{k}	\hPhi_p^+(A) - \varrho \Phi_p^+(A)
		&= \tfrac{n}{k} Q_n(\widehat{\Gamma}_A \times \{+1\}) - \nu(\cone_A \times \{+1\}) \\
		&\le \tfrac{n}{k} Q_n(\tfrac{n}{k} \Gamma_{A}^+  \times \{+1\}) - \nu(\Gamma_{A}^- \times \{+1\}) \\
		&\le \tfrac{n}{k} \left| Q_n(\tfrac{n}{k} \Gamma_{A}^+ \times \{+1\}) - Q(\tfrac{n}{k} \Gamma_{A}^+ \times \{+1\}) \right| \\
		& \qquad + \left| \tfrac{n}{k} Q(\tfrac{n}{k} \Gamma_{A}^+ \times \{+1\}) - \nu(\Gamma_{A}^+ \times \{+1\}) \right| \\
		&\qquad 	+ \nu\left((\Gamma_{A}^+ \setminus \Gamma_{A}^-) \times \{+1\}\right).
	\end{align*}
	A lower bound for the estimation error can be derived in a similar way, yielding, on $\event_1$,
	\begin{align*}
			\left\lvert \tfrac{k_+}{k} \hPhi_p^+(A) - \varrho \Phi_p^+(A) \right\rvert
		&\le
		\max_{B \in \{\Gamma_{A}^+, \Gamma_{A}^-\}}
		\tfrac{n}{k} 
		\left|
		Q_n(\tfrac{n}{k} B \times \{+1\}) 
		- Q(\tfrac{n}{k} B  \times \{+1\}) 
		\right|
		& \text{(stochastic error II)} \\
		&\quad {} +
		\max_{B \in \{\Gamma_{A}^+, \Gamma_{A}^-\}}
		\tfrac{n}{k}  \left| Q(\tfrac{n}{k} B \times \{+1\}) -
		\nu(B  \times \{+1\}) \right|
		& \text{(bias term II)} \\
		&\quad {} +
		\nu\left((\Gamma_{A}^+ \setminus \Gamma_{A}^-) \times \{+1\}\right)
		& \text{(framing gap II).}
	\end{align*}
	We treat the three terms separately, following closely the proof of Theorem~\ref{thm:conc}. 
	
	\textbf{Bias term II.} \quad
	Taking the supremum over $A \in \cA$ immediately yields the bias term in the statement of Theorem~\ref{theo:deviationsClassif}.
	
	\textbf{Stochastic error II.} \quad
	Since $Q(B\times\{+1\}) \le \PP{X \in B}$, the $Q$-probability of sets in the class
	$\mathcal{F}' = \mathcal{F} \times\{+1\}$ (with $ \mathcal{F}$ defined in \eqref{eq:cF})  is less than $d^{1+1/p}(1+\Delta_2) \tfrac{k}{n}$, see~\eqref{eq:Fprobmax}. The class $\mathcal{F}'$ has the same VC-dimension $V_{\cF}$ as $\mathcal{F}$. Thus on an event $\event_2^+$ of probability $1-\delta/(2(d+1))$ we have, by Theorem~\ref{thm:chaining},
	\begin{multline*}
		\sup_{A \in \cA} \max_{B \in \{\Gamma_{A}^+, \Gamma_{A}^-\}}
		\tfrac{n}{k} 
		\left| 
		Q_n(\tfrac{n}{k} B \times \{+1\}) 
		- Q(\tfrac{n}{k} B  \times \{+1\}) 
		\right| \\
		\le \sqrt{\frac{d^{1+1/p}(1+\Delta_2)}{k}} \left( 56 \sqrt{V_\F} + 2 \sqrt{\log(2(d+1)/\delta)} \right) + \frac{2}{3k} \log(2(d+1)/\delta),
	\end{multline*}
	which is the term labelled `$\operatorname{error}$' in the statement. 
	
	\textbf{Framing gap II.} \quad
	As in the proof of Theorem~\ref{thm:conc}, the framing gap in the product space satisfies
	\begin{equation}
		\label{eq:framingGapDecomposition-classif}
		\begin{split}
			&\nu\bigl((\Gamma_{A}^+ \setminus \Gamma_{A}^-) \times \{+1\}\bigr)  \\
			&\; \le \nu \left(\left\{ x \in [0, \infty)^d : \
			(1+\Delta_2)^{-1} 
			\le \norm{x}_p < (1-\Delta_2)^{-1} 
			\right\} \times \{+1\}\right) \\
			&\; + \nu\left(\left\{ x \in [0, \infty)^d : \
			\norm{x}_p \ge 1, \; \theta_p(x) \in A_+(3\Delta_1 \norm{x}_p) \setminus A_-(3\Delta_1 \norm{x}_p)
			\right\}\times\{+1\} \right).
		\end{split}
	\end{equation}
	The first term on the right-hand side
	of~\eqref{eq:framingGapDecomposition-classif} is equal to
	\[2 \Delta_2 \nu(\{ x \in [0, \infty)^d : \norm{x}_p \ge 1 \}\times\{+1\}) =
	2\Delta_2 \varrho \Phi_p^+(\sphere_p) \le 2 \Delta_2 \Phi_p(\sphere_p), \]
	where the latter inequality comes from the decomposition $\Phi_p = \varrho \Phi_p^+ + (1-\varrho)\Phi_p^-$.
	The second term on the right-hand side in
	\eqref{eq:framingGapDecomposition-classif} can be expressed using the
	polar decomposition of $\mu_+$ (and thus $\nu$), yielding
	\begin{multline*}
		\nu\left(\left\{ x \in [0, \infty)^d : \
		\norm{x}_p \ge 1, \; \theta_p(x) \in A_+(3\Delta_1 \norm{x}_p) \setminus A_-(3\Delta_1 \norm{x}_p)
		\right\}\times\{+1\} \right) \\
		= \int_{1}^\infty  \varrho \Phi_p^+ \bigl( A_+(3\Delta_1 r) \setminus A_-(3\Delta_1 r) \bigr) \frac{\diff r}{r^2} \le \int_{1}^\infty  \Phi_p \bigl( A_+(3\Delta_1 r) \setminus A_-(3\Delta_1 r) \bigr) \frac{\diff r}{r^2}.
	\end{multline*}
	In the proof of Theorem~\ref{thm:conc}, it has been shown that the bound in the latter display is less than
	\[
	3 c \Delta_1 (1 + \log \Phi_p(\sphere_p) - \log(3c \Delta_1)).
	\]
	We thus obtain the same $\operatorname{gap}$ term as in Theorem~\ref{thm:conc}.
	
	So far we have only treated one of the two terms of the error decomposition~\eqref{eq:excessRisk-decompose-biasVar2}. The second one is treated in the same way and the associated upper bound is identical, which yields the factor two in the statement of Theorem~\ref{theo:deviationsClassif}. The decomposition of $|\frac{k_-}{k}\hPhi_p^-(A) - (1-\varrho)\Phi_p^-(A)|$
	into a stochastic error, a bias term and framing gap holds true on the same event $\event_1$.  The bound on the stochastic error is valid on an event $\event_2^-$ of probability at least $1-\delta/(2(d+1))$. Thus the upper bound in the statement of Theorem~\ref{theo:deviationsClassif} holds true on the intersection $\event_1\cap\event_2^+ \cap \event_2^-$, which has probability at least $1 - \delta$. 
\end{proof}

\section{Proofs of simulations experiments}
\label{sec:app:simu}

The following lemma describes convenient properties relative to the simulation setting described in Section~\ref{sec:experiments} in the paper. Recall $\sphere_p^\tau = \{ x \in \sphere_p : \min(x) > \tau \}$ for $\tau \in (0, 1)$ as well as the cones $\cone_A$ in \eqref{eq:angularMeasure} in the paper.

\begin{lemma}\label{lem:explicit-V-Phi}
	Let $R$ be a unit-Pareto distributed random variable and let $\Theta$ be a random vector independent from $R$, with support in $\sphere_1 = \{x \in [0, 1]^d : x_1+\cdots+x_d=1\}$ and satisfying $\EE(\Theta_j) = 1/d$ for $j \in \{1,\ldots,d\}$.   
	Let $X = R\Theta$ and write $V = v(X)$ with $v(x) = (1/(1-F_1(x_1)), \ldots, 1/(1-F_d(x_d)))$ for $x \in \reals^d$, where $F_j(x_j) = \PP{X_j \le x_j}$.
	\begin{enumerate}[label=(\roman*)]
		\item We have $v(x) = dx$ for all $x \in [1,\infty)^d$. 
		Conversely, if $x \in \reals^d$ satisfies $v(x) \in (d, \infty)^d$, then $x \in (1, \infty)^d$ and thus $v(x) = dx$.
		\item 
		The distribution of $V$ is multivariate regularly varying and its angular measure $\Phi_p$ with respect to the $L_p$-norm, for $p \in [1,\infty]$, is given by
		\[
		\Phi_p(A) = d \PP{X \in \cone_A}
		\]
		for Borel sets $A \subseteq \sphere_p$. Moreover, if $A \subseteq \sphere_p^\tau$ for some $\tau \in (0, 1)$ and if $t > d/\tau$, then also
		\[
		\Phi_p(A) = t \PP{t^{-1} V \in \cone_A}.
		\]
	\end{enumerate}
\end{lemma}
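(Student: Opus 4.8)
\textbf{Proof plan for Lemma~\ref{lem:explicit-V-Phi}.}

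\emph{Part (i).} First I would compute the marginal distribution functions $F_j$. Since $X = R\Theta$ with $R$ unit-Pareto on $[1,\infty)$ and $\Theta$ supported on the unit simplex with $\EE(\Theta_j) = 1/d$, for $x_j \ge 1$ we have $\PP{X_j > x_j} = \PP{R\Theta_j > x_j}$. Conditioning on $\Theta_j$, for $x_j \ge 1$ and $\Theta_j \le 1$ we always have $x_j/\Theta_j \ge 1$, so $\PP{R > x_j/\Theta_j \mid \Theta_j} = \Theta_j / x_j$ by the unit-Pareto tail; taking expectations gives $\PP{X_j > x_j} = \EE(\Theta_j)/x_j = 1/(d x_j)$, hence $1 - F_j(x_j) = 1/(dx_j)$ and $v_j(x_j) = d x_j$ for $x_j \ge 1$. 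This proves $v(x) = dx$ for $x \in [1,\infty)^d$. For the converse, I would note that each $v_j$ is non-decreasing and that $v_j(x_j) = dx_j > d$ forces $x_j > 1$ (because for $x_j \le 1$ we have $1 - F_j(x_j) \ge \PP{X_j > 1} = 1/d$, so $v_j(x_j) \le d$); once all coordinates exceed $1$, the formula $v(x) = dx$ from the first half applies.

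\emph{Part (ii).} The multivariate regular variation of $V$ follows from the standard fact that $V = dX = dR\Theta$ is a product of a regularly-varying radial part $dR$ (unit-Pareto rescaled, still index~$1$) and an independent angular part $\Theta$; more directly, I would verify \eqref{eq:standardRV} by hand. For the identification of $\Phi$: by \eqref{eq:angularMeasure}, $\Phi(A) = \mu(\cone_A)$, and using $t\PP{t^{-1}V \in \cone_A} \to \mu(\cone_A)$ it suffices to show that for $t$ large this pre-asymptotic quantity is already constant and equals $d\PP{X \in \cone_A}$. Here $\cone_A = \{x : \norm{x} \ge 1, \theta(x) \in A\}$, so $t^{-1}V \in \cone_A$ iff $\norm{V} \ge t$ and $\theta(V) \in A$. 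Since $\theta$ is scale-invariant and, on the event $\norm{V} \ge t > d$, part~(i) gives $V = dX$ (all coordinates of $V$ exceed... more carefully, $\norm{V}\ge t$ only controls the max; I would instead use the Monte-Carlo style identity directly), one has $\theta(V) = \theta(X)$ and $\norm{V} = d\norm{X}$ wherever $V = dX$. The cleanest route: write $\PP{t^{-1}V \in \cone_A} = \PP{\theta(V) \in A, \norm{V} \ge t}$ and, using that the law of $X = R\Theta$ is itself $1$-homogeneous in its radial part above level $1$, reduce to $\PP{X \in \cone_A} \cdot$(tail factor). Concretely, $\{X \in s\cone_A\} = \{\theta(X) \in A, \norm{X} \ge s\}$ and $\PP{\norm{X} \ge s, \theta(X) \in A} = \EE[\un\{\theta(\Theta)\in A\}\PP{R \ge s/\norm{\Theta} \mid \Theta}] = s^{-1}\EE[\un\{\theta(\Theta)\in A\}\norm{\Theta}]$ for $s \ge 1/\min_j\Theta_j$, i.e.\ for $s$ large enough given the support; so $s\PP{X \in s\cone_A}$ is eventually constant in $s$, equal to its value at $s=1$, namely $\PP{X \in \cone_A}$. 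Combining with $V = dX$ on the relevant event yields $t\PP{t^{-1}V\in\cone_A} = t\PP{X \in (t/d)\cone_A} = d\cdot(t/d)\PP{X\in (t/d)\cone_A} = d\PP{X \in \cone_A}$ once $t/d$ is large enough, and letting $t\to\infty$ gives $\Phi(A) = d\PP{X\in\cone_A}$.

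\emph{The last equality, under the restriction $A \subseteq \sphere_\tau$.} When $A \subseteq \sphere_\tau$ and $t > d/\tau$, I claim the event $\{t^{-1}V \in \cone_A\}$ forces $V \in (d,\infty)^d$, so that part~(i) applies coordinate-wise. Indeed, if $\theta(V) \in A \subseteq \sphere_\tau$ then every coordinate of $\theta(V)$ exceeds $\tau$, so $V_j = \norm{V}\,\theta_j(V) > \tau \norm{V} \ge \tau t > d$ for all $j$; hence $V = dX$ with $X \in (1,\infty)^d$, giving $\norm{V} = d\norm{X}$ and $\theta(V) = \theta(X)$. Therefore $\{t^{-1}V \in \cone_A\} = \{X \in (t/d)\cone_A\}$ exactly (not just asymptotically), and by the computation above, since $t/d > 1/\tau \ge 1/\min_j\Theta_j$ whenever $\theta(\Theta) \in \sphere_\tau$ (so the Pareto-tail formula is exact), $t\PP{t^{-1}V\in\cone_A} = t\PP{X \in (t/d)\cone_A} = d\,\EE[\un\{\theta(\Theta)\in A\}\norm{\Theta}] = \Phi(A)$.

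\emph{Main obstacle.} The only delicate point is bookkeeping the thresholds: making sure that on the event defining $\cone_A$ (or $t^{-1}\cone_A$) \emph{all} coordinates of $X$ — not merely the maximal one — exceed $1$, so that the exact identity $v(x) = dx$ from part~(i) is legitimately invertible, and that the conditional Pareto-tail formula $\PP{R \ge s/\norm{\Theta}\mid\Theta} = \norm{\Theta}/s$ holds without the truncation at $1$ intervening. The restriction $A \subseteq \sphere_\tau$ together with $t > d/\tau$ is precisely what buys this, and spelling out that chain of inequalities is the crux; everything else is a routine computation with the unit-Pareto tail and homogeneity.
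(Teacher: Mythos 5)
Part~(i) is correct and essentially identical to the paper's argument. The gap is in the first half of part~(ii), where you want the measure identity $\Phi(A) = d\,\PP{X\in\cone_A}$ for \emph{all} Borel $A\subseteq\sphere$. You flag the right concern --- that $\norm{V}\ge t$ only controls the largest coordinate --- but then re-introduce the very step you flagged: ``Combining with $V = dX$ on the relevant event yields $t\PP{t^{-1}V\in\cone_A} = t\PP{X\in(t/d)\cone_A}$''. On the event $\{\theta(V)\in A,\;\norm{V}\ge t\}$, part~(i) gives $V_{j_0}=dX_{j_0}$ only for the maximal coordinate $j_0$; the remaining $V_j$ may lie in $[1,d)$ with $X_j<1$, where $V_j\neq dX_j$, and these small coordinates enter $\theta(V)$ and distort it from $\theta(X)$. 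Hence $\{t^{-1}V\in\cone_A\}\neq\{X\in(t/d)\cone_A\}$ for $A$ approaching the relative boundary of $\sphere$, and the claimed equality for general Borel $A$ does not follow from your cone computation. Your argument is airtight for $A\subseteq\sphere_\tau$ with $t>d/\tau$ (the last displayed identity), but that only pins down $\Phi$ on $\bigcup_{\tau>0}\sphere_\tau$, not on the faces where $\Phi$ may in principle carry mass. You also quietly need $\mu(\partial\cone_A)=0$ to pass from $\lim_t t\PP{t^{-1}V\in\cone_A}$ to $\mu(\cone_A)$, which you never verify.

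The paper's route avoids both issues by never touching cones when establishing the measure identity. It computes $t\PP{t^{-1}V\in E\setminus[0,x]}$ over rectangle complements with $tx_j\ge d$ for every $j$: the defining event $\{\exists j: V_j>tx_j\}$ is a union of single-coordinate exceedances, and for each $j$ the comparison $V_j>tx_j\ge d$ is \emph{exactly} equivalent to $dX_j>tx_j$ by part~(i) --- no simultaneous control of the small coordinates is required. Rectangle complements form a measure-determining class for $\mu$, and writing $\mu(E\setminus[0,x])$ in terms of $\Phi$ via the polar formula~\eqref{eq:polar} identifies the angular measure with $d\,\PP{X\in\cone_{\,\cdot\,}}$ on all Borel sets at once, without any continuity-set or boundary argument. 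If you want to salvage the cone-based route you would have to take a monotone limit $\tau\downarrow 0$ and handle the faces of $\sphere$ separately. A minor slip: your side condition ``$s\ge 1/\min_j\Theta_j$'' is wrong and, worse, random; since $\norm{\Theta}\le 1$ on the simplex in the max norm, the exact conditional tail $\PP{R\ge s/\norm{\Theta}\mid\Theta}=\norm{\Theta}/s$ holds for every deterministic $s\ge 1$, and that is all that is needed.
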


\begin{proof}[Proof of Lemma~\ref{lem:explicit-V-Phi}]
	\emph{(i)}
	For any $j \in \{1,\ldots,d\}$ and any $x_j > 0$,
	\begin{align*}
		1 - F_j(x_j) &= \PP{R\Theta_j > x_j}\\
		&= \EE\{\Prob{R > x_j/\Theta_j \mid \Theta_j} \}\\
		&= \EE\{\min(1,  \Theta_j/x_j)\}.
	\end{align*}
	Since the support of $\Theta$ is contained in the unit simplex, we have $\Theta_j/x_j\le 1$ almost surely whenever $x_j \ge 1$. In addition $\EE(\Theta_j) = 1/d$ by assumption. It follows that
	\begin{align*}
		1/(1-F_j(x_j))=  \begin{cases}
			dx_j & \text{if $x_j \geq 1$,}\\
			1/\EE\{\min(1,  \Theta_j/x_j)\} & \text{if $0 < x_j < 1$.} 
		\end{cases}
	\end{align*}
	This proves that $\Vf(x) = dx\in [d,\infty)^d$ for $x \in [1,\infty)^d$.
	The converse statement follows from $1-F_j(1) = 1/d$ and monotonicity.
	
	\emph{(ii)} Let $x \in (0, \infty)^d$ and let $t > 0$ be sufficiently large so that $tx_j \ge d$ for all $j \in \{1,\ldots,d\}$. Then
	\begin{align*}
		1 - \PP{V_1 \le tx_1, \ldots, V_d \le tx_d}
		&= \PP{ \exists j = 1,\ldots,d : d R \Theta_j > t x_j } \\
		&= \mathbb{P} \left[ R > (t/d) \min_{j=1,\ldots,d} (x_j/\Theta_j) \right] \\
		&= (d/t) \EE\left[\max_{j=1,\ldots,d} (\Theta_j / x_j) \right].
	\end{align*}
	In the last step, we conditioned on $\Theta$ and we used the fact that $(t/d) (x_j/\Theta_j) \ge 1$ a.s., by the assumptions on $t$ and $\Theta$.
	Recall $E = [0, \infty)^d \setminus \{(0, \ldots, 0)\}$. It follows that
	\begin{equation}
		\label{eq:tPtVdEmax}
		\forall x \in (0, \infty)^d, \quad 
		\forall t > \frac{d}{\min_{j=1,\ldots,d} x_j}, \qquad
		t \PP{t^{-1} V \in E \setminus [0, x]}
		= d \EE\left[\max_{j=1,\ldots,d} (\Theta_j / x_j) \right].
	\end{equation}
	For fixed $x \in (0, \infty)^d$, the limit as $t \to \infty$ trivially exists.
	This implies that $V$ is multivariate regularly varying as in Section~\ref{sec:RVmu} in the paper; see for instance \citep[Lemma~6.1]{resnick2007heavy}.
	More precisely, $t \PP{t^{-1}V \in \cdot\,} \to \mu(\,\cdot\,)$ as $t \to \infty$ in the space $\mathcal{M}_0$, where $\mu$ is determined by the right-hand side of the previous display via the values of $\mu(E \setminus [0, x])$ for $x \in (0, \infty)^d$.
	
	The angular measure $\Phi_p$ determines the exponent measure $\mu$ uniquely via the relation \eqref{eq:polar} in the paper. In that identity, letting $f$ be the indicator function of the set $E \setminus [0, x]$ with $x \in (0, \infty)^d$, it follows by a standard calculation involving Fubini's theorem that
	\[
	\mu(E \setminus [0, x]) 
	= \int_{\sphere_p} \max_{j=1,\ldots,d}(\theta_j/x_j) \, \diff \Phi_p(\theta).
	\] 
	Define a Borel measure $\Phi_p'$ on $\sphere_p$ by
	\begin{align*}
		\Phi_p'(A) 
		&= d \PP{X \in \cone_A} \\
		&= d \PP{R \norm{\Theta}_p > 1, \, \Theta / \norm{\Theta}_p \in A} \\
		&= d \EE[\norm{\Theta}_p \un\{ \Theta / \norm{\Theta}_p \in A \}]
	\end{align*}
	for Borel sets $A \subseteq \sphere_p$. Recall that $\Phi_p$ is considered with respect to a general $L_p$-norm but $\Theta$ is a random vector in the unit simplex, so $0 < \norm{\Theta}_p \le 1$ almost surely since $p \geq 1$. The last equality follows by conditioning on $\Theta$, the independence of $R$ and $\Theta$, and the assumption that the distribution of $R$ is unit-Pareto. For Borel measurable, nonnegative functions $g$ on $\sphere_p$, we get
	\[
	\int_{\sphere_p} g(\theta) \, \diff \Phi_p'(\theta)
	= d \EE[\norm{\Theta}_p \, g(\Theta / \norm{\Theta}_p)].
	\]
	Applying the latter identity to the function $g$ defined by $g(\theta) = \max_{j=1,\ldots,d}(\theta_j/x_j)$ for some fixed $x \in (0, \infty)^d$ yields
	\[
	\int_{\sphere_p} \max_{j=1,\ldots,d}(\theta_j/x_j) \, \diff \Phi_p'(\theta)
	= d \EE\left[\max_{j=1,\ldots,d} (\Theta_j / x_j)\right] 
	= \mu(E \setminus [0, x]).
	\]
	We conclude that $\Phi_p$ is equal to $\Phi_p'$, as required.
	
	Finally, let $0 < \tau < 1$ and let $A \subseteq \sphere_p^\tau$ be a Borel set. The cone $\cone_A = \{x \in E : \norm{x}_p \ge 1, x / \norm{x}_p \in A\}$ is a subset of $(\tau, \infty)^d$. The equality \eqref{eq:tPtVdEmax} together with the inclusion--exclusion formula and the fact that rectangles form a measure-determining class imply that the measures $t \PP{t^{-1} V \in \cdot\,}$ and $\mu$ correspond on $(\tau, \infty)^d$. It follows that
	\[
	t \PP{t^{-1} V \in \cone_A} = \mu(\cone_A) = \Phi_p(A). \qedhere
	\]
\end{proof}

\end{appendix}

\end{document}